\DeclareSymbolFont{rsfscript}{OMS}{rsfs}{m}{b}
\DeclareSymbolFontAlphabet{\mathrsfs}{rsfscript}
\renewcommand{\mathcal}{\mathrsfs}
\def\bfit{\bfseries\itshape}
\newtheorem{theo}{Theorem}[section]
\newtheorem{prop}[theo]{Proposition}
\newtheorem{lem}[theo]{Lemma}
\newtheorem{coro}[theo]{Corollary}
\def\remark#1{{\refstepcounter{theo}\label{#1}\noindent\sc Remark
\arabic{section}.\arabic{theo} - }}
\def\example#1{{\refstepcounter{theo}\label{#1}\noindent\sc Example 
\arabic{section}.\arabic{theo} - }}
\def\equat{\refstepcounter{theo}$$~}
\def\endequat{\leqno{\boldsymbol{(\arabic{section}.\arabic{theo})}}~$$}
\newtheorem{theoap}{Theorem}
\newtheorem{propap}[theoap]{Proposition}
\newtheorem{coroap}[theoap]{Corollary}
\def\exampleap#1{{\refstepcounter{theoap}\label{#1}\noindent\sc Example 
\Alph{section}.\arabic{theoap} - }}
\def\equatap{\refstepcounter{theoap}$$~}
\def\endequatap{\leqno{
\boldsymbol{({\mathbf{\Alph{section}}}.\arabic{theoap})}}~$$}
\def\AG{{\mathfrak A}}
    \def\NM{{\mathbb{N}}}
    \def\QM{{\mathbb{Q}}}
    \def\RM{{\mathbb{R}}}
\def\SG{{\mathfrak S}}
    \def\ZM{{\mathbb{Z}}}
  \def\ab{{\mathbf a}}  
    \def\CC{{\mathcal{C}}}
    \def\DC{{\mathcal{D}}}
    \def\EC{{\mathcal{E}}}
    \def\FC{{\mathcal{F}}}
    \def\HC{{\mathcal{H}}}
    \def\IC{{\mathcal{I}}}
    \def\LC{{\mathcal{L}}}
    \def\PC{{\mathcal{P}}}
    \def\RC{{\mathcal{R}}}
    \def\UC{{\mathcal{U}}}
    \def\XC{{\mathcal{X}}}
\def\Brm{{\mathrm{B}}}
    \def\HCB{{\boldsymbol{\mathcal{H}}}}
 \def\HCt{{\widetilde{\mathcal{H}}}}
\def\Jti{{\widetilde{J}}}
  \def\tti{{\tilde{t}}}
\def\Wti{{\widetilde{W}}}
\def\Iba{{\bar{I}}}
\def\Sba{{\bar{S}}}          \def\sba{{\bar{s}}}
          \def\tba{{\bar{t}}}
\def\Cov{{\overline{C}}}
          \def\hov{{\overline{h}}}
\def\g{\gamma}
\def\G{\Gamma}
\def\ph{\varphi}
\def\l{\lambda}
\def\L{\Lambda}
\def\o{\omega}
\def\r{\rho}
\def\s{\sigma}
\def\th{\theta}
\def\t{\tau}
\def\phb{{\boldsymbol{\varphi}}}        \def\pht{{\tilde{\varphi}}}
\def\phba{{\bar{\varphi}}}
\DeclareMathOperator{\Hom}{{\mathrm{Hom}}}
\DeclareMathOperator{\Id}{{\mathrm{Id}}}
\DeclareMathOperator{\Ind}{{\mathrm{Ind}}}
\DeclareMathOperator{\Ker}{{\mathrm{Ker}}}
\DeclareMathOperator{\sgn}{{\mathrm{sgn}}}
\DeclareMathOperator{\Pos}{{\mathrm{Pos}}}
\DeclareMathOperator{\trans}{{\mathrm{trans}}}
\def\to{\rightarrow}
\def\longto{\longrightarrow}
\def\injto{\hookrightarrow}
\def\fonction#1#2#3#4#5{\begin{array}{rccc}
{#1} : & {#2} & \longto & {#3} \\
& {#4} & \longmapsto & {#5} 
\end{array}}
\def\fonctio#1#2#3#4{\begin{array}{ccc}
{#1} & \longto & {#2} \\
{#3} & \longmapsto & {#4} 
\end{array}}
\def\vide{\varnothing}
\def\DS{\displaystyle}
\def\SS{\scriptstyle}
\def\finl{~$\SS \square$}
\def\infspe{\hspace{0.1em}\mathop{\preccurlyeq}\nolimits\hspace{0.1em}}
\def\lexp#1#2{\kern\scriptspace\vphantom{#2}^{#1}\kern-\scriptspace#2}
\def\le{\mathop{\leqslant}\nolimits}
\def\ge{\mathop{\geqslant}\nolimits}
\mathchardef\inferieur="321E
\mathchardef\superieur="321F
\def\eqna{\begin{eqnarray*}}
\def\endeqna{\end{eqnarray*}}
\def\tors{tores maximaux }
\def\tors{{\mathrm{tors}}}
\def\itemth#1{\item[${\mathrm{(#1)}}$]}
\long\def\@car#1#2\@nil{#1}
\long\def\@first#1#2{#1}
\long\def\@second#1#2{#2}
\long\def\ifempty#1{\expandafter\ifx\@car#1@\@nil @\@empty
  \expandafter\@first\else\expandafter\@second\fi}
\def\positif{\PC\! os}
\def\chambres{{\mathrm{Cham}}}
\def\facets{{\mathrm{Fac}}}
\def\relations{\RC\! el}
\DeclareMathOperator{\can}{{\mathrm{can}}}
\DeclareMathOperator{\Cell}{{\mathrm{Cell}}}
\DeclareMathOperator{\maps}{{\mathrm{Maps}}}
\DeclareMathOperator{\weight}{{\mathrm{Weights}}}
\DeclareMathOperator{\codim}{{\mathrm{codim}}}
\def\ellb{\boldsymbol{\ell}}
\def\ellt{\tilde{\ell}}
\def\aug{{\mathrm{aug}}}
\def\espace{\vphantom{\DS{\frac{A}{A}}}}
\def\dotcup{\hskip1mm\dot{\cup}\hskip1mm}
\def\gauche{\stackrel{L,\ph}{\longleftarrow}}
\def\droite{\stackrel{R,\ph}{\longleftarrow}}
\def\bilatere{\stackrel{LR,\ph}{\longleftarrow}}
\def\pre#1{\leqslant_{#1}}
\DeclareMathOperator{\SGN}{{\mathrm{SGN}}}
\begin{document}

\baselineskip=15pt

\title{Semicontinuity properties of \\ Kazhdan-Lusztig cells}

\author{C\'edric Bonnaf\'e}
\address{\noindent 
Labo. de Math. de Besan\c{c}on (CNRS: UMR 6623), 
Universit\'e de Franche-Comt\'e, 16 Route de Gray, 25030 Besan\c{c}on
Cedex, France} 

\makeatletter
\email{cedric.bonnafe@univ-fcomte.fr}
\urladdr{http://www-math.univ-fcomte.fr/pp\_Annu/CBONNAFE/}

\thanks{The author is partly supported by the ANR (Project 
No JC07-192339)}

\subjclass{According to the 2000 classification:
Primary 20C08; Secondary 20C15}

\date{\today}

\begin{abstract} 
Computations in small Coxeter groups or dihedral groups suggest 
that the partition into Kazhdan-Lusztig cells with unequal 
parameters should obey to some semicontinuity phenomenon 
(as the parameters vary). The aim of this paper is to 
provide a rigorous theoretical background for supporting 
this intuition that will allow to state several precise conjectures. 
\end{abstract}

\maketitle

\pagestyle{myheadings}

\markboth{\sc C. Bonnaf\'e}{\sc Semicontinuity properties of 
Kazhdan-Lusztig cells}

%\bigskip

Let $(W,S)$ be a Coxeter group and assume, for simplification 
in this introduction, that $S$ is finite and that $S= S_1 \dotcup S_2$, 
where $S_1$ and $S_2$ are two non-empty subsets of $S$ such that no element 
of $S_1$ is conjugate to an element of $S_2$. Let 
$\ell : W \to \NM=\{0,1,2,\dots\}$ be the length function 
and $\ell_i : W \to \NM$ the $S_i$-length function (if $w \in W$, 
$\ell_i(w)$ is the number of occurrences of elements of $S_i$ in a 
reduced decomposition of $w$). Let us fix two non-zero natural 
numbers $a$ and $b$ and let $L_{a,b} : W \to \ZM$, 
$w \mapsto a \ell_1(w) + b \ell_2(w)$. Then $L_{a,b}$ 
is a {\it weight function} (in the sense of Lusztig) 
so it is possible to define a partition of $W$ into left 
Kazhdan-Lusztig cells. It is clear 
that this partition depends only on $b/a$: we shall denote it 
by $\LC_{b/a}(W)$. Explicit computations suggest the following 
conjecture:

\bigskip

\begin{quotation}
\noindent{\bf Conjecture 0.} 
{\it There exist rational numbers $0 < r_1 < \cdots < r_m$ 
(depending only on $(W,S)$) such that (setting $r_0=0$ and $r_{m+1}=+\infty$), 
if $\th$ and $\th'$ are two positive rational numbers, then:
\begin{itemize}
\itemth{a} If $0 \le i \le m$ and $r_i < \th,\th' < r_{i+1}$, 
then $\LC_\th(W)=\LC_{\th'}(W)$. 

\itemth{b} If $1 \le i \le m$ and $r_{i-1} < \th < r_i < \th' < r_{i+1}$, 
then $\LC_{r_i}(W)$ is the finest partition of $W$ which is less fine than  $\LC_\th(W)$ and less fine than $\LC_{\th'}(W)$. 
\end{itemize}}
\end{quotation}

\bigskip
%\smallskip

\noindent{\sc Remarks - } (1) 
One can obviously state similar conjectures for the partitions 
into right or two-sided cells.

\smallskip

(2) In the case where $W$ is finite, the existence of rational 
numbers $0 < r_1 < \cdots < r_m$ satisfying (a) is easy 
(see Proposition \ref{finitude}). However, 
even in this case, the statement (b) is still a conjecture.\finl

\bigskip

The aim of this paper is to provide a theoretical 
background that allows us to generalize this conjecture 
in the following two directions:
\begin{quotation}
\noindent $\bullet$ It is of course possible to work with a partition 
of $S$ into more than two subsets (this does not occur if $W$ is 
finite and irreducible).

\noindent $\bullet$ One can also be interested in weight functions 
with values in any totally ordered abelian group, and whose values 
on simple reflections are not necessarily positive.
\end{quotation}
These two generalisations lead us to define equivalence 
classes of weight functions (in the easy case detailed 
in this introduction, this has been done by working with 
the ratio $b/a$ instead of the pair $(a,b)$) and to use 
a topology on the set of such equivalence classes. This is 
done by using the notion of {\it positive subsets} of 
an abelian group (as defined in \cite[\S 1]{bonnafe positif}): we shall 
review some of the results of \cite{bonnafe positif} in an 
Appendix. 

This paper is organized as follows. After two sections devoted 
to recollections of well-known facts about Hecke algebras and 
Kazhdan-Lusztig theory with unequal parameters, we shall state 
our main conjecture in the third one: it is only concerned 
with the partition into cells and will be given into 
different forms (see Conjectures A, A' and A''). In the fourth section, 
we shall also state a conjecture saying, roughly speaking, that 
Conjecture A is compatible with the construction of {\it cell representations} 
(see Conjecture B). In the last section, we 
illustrate these conjectures by giving a detailed account of the following examples:

\bigskip

\noindent{\sc Examples - } 
(1) {\it Dihedral groups:} 
If $|S|=2$ and if $|S_1|=|S_2|=1$, then the Conjecture 0 
holds by taking $m=1$ and $r_1=1$ (see \cite[\S 8.8]{lusztig}). 

\medskip

(2) {\it Type $F_4$:} If $(W,S)$ is of type $F_4$ and if $|S_1|=|S_2|=2$, 
then the Conjecture 0 holds by taking $m=3$ and $r_1=1/2$, $r_2=1$ et $r_3=2$ 
(see \cite[Corollaire 4.8]{geck f4}). 

\medskip

(3) {\it Type $B_n$:} Assume that $(W,S)$ is of type 
$B_n$ (with $n \ge 2$) and that $|S_1|=n-1$ and $|S_2|=1$. 
In \cite[Conjectures A and B]{bgil}, the Conjecture 0 is made more 
precise: it should be sufficient to take $m=n-1$ and $r_i=i$. 
This has been checked for $n \le 6$. 

\medskip

(4) {\it Type $\tilde{G}_2$:} Assume that $(W,S)$ is the affine Weyl group of type 
$\tilde{G}_2$ and that $S_1$ and $S_2$ are chosen in such a way that $|S_1|=2$ and $|S_2|=1$. 
Then Guilhot \cite{guilhot rang 2} has shown that the Conjecture 0 holds by 
taking $m=3$ and $r_1/1$, $r_2=3/2$ and $r_3=2$.

\medskip

(5) {\it Type $\tilde{B}_2$:} Guilhot \cite{guilhot rang 2} 
has also shown that Conjecture 0 holds if $(W,S)$ 
is an affine Weyl group of type $\tilde{B}_2$. In this case, there are several possibilities 
for the choice of the partition $S=S_1 \dotcup S_2$, but Guilhot has proved that 
Conjecture 0 holds for all possible choices.\finl

\bigskip

%(4) {\it Type $B(p,q)$:} Let $p$ and $q$ be two non-zero natural 
%numbers and assume that $|S|=p+q$, $S_1=\{s_1,\dots,s_p\}$, 
%$S_2=\{t_1,\dots,t_q\}$ and that the Coxeter graph of $(W,S)$ 
%is given by 
%\begin{center}
%\begin{picture}(385,30)
%\put( 45, 10){\line(1,0){20}}
%\put( 40, 10){\circle{10}}
%\put( 75,  7){$\cdot$}
%\put( 85,  7){$\cdot$}
%\put( 95,  7){$\cdot$}
%\put(104, 10){\line(1,0){20}}
%\put(129, 10){\circle{10}}
%\put(134, 10){\line(1,0){31}}
%\put(170, 10){\circle{10}}
%\put(174,  7){\line(1,0){33}}
%\put(174, 13){\line(1,0){33}}
%\put(211, 10){\circle{10}}
%\put(216, 10){\line(1,0){29}}
%\put(250, 10){\circle{10}}
%\put(255, 10){\line(1,0){20}}
%\put(285,  7){$\cdot$}
%\put(295,  7){$\cdot$}
%\put(305,  7){$\cdot$}
%\put(315, 10){\line(1,0){20}}
%\put(340, 10){\circle{10}}
%\put( 36, 20){$s_p$}
%\put(126, 20){$s_2$}
%\put(167, 20){$s_1$}
%\put(206, 20){$t_1$}
%\put(246, 20){$t_2$}
%\put(337, 20){$t_q$}
%\end{picture}
%\end{center}
%We shall say that it is of type $B(p,q)$. Note that $B(p,q) \simeq B(q,p)$, 
%$B(p,1) \simeq B_{p+1}$, $B(2,2) \simeq F_4$ and 
%$B(3,2) \simeq B(2,3) \simeq \tilde{F}_4$. It is then natural 
%to ask whether the examples (2) and (3) could be generalized as follows:
%is it true that Conjecture 0 holds in type $B(p,q)$ by taking 
%$m=p+q-1$ and, for the sequence $r_1 < \cdots < r_{p+q-1}$, 
%the sequence $\DS{\frac{1}{q} < \frac{1}{q-1} < \cdots < \frac{1}{2} 
%< 1 < 2 < \cdots < p-1 < p}$?

\bigskip

\noindent{\bf Acknowledgements.} Part of this work was done while the 
author stayed at the MSRI during the winter 2008. The author wishes to 
thank the Institute for its hospitality and the organizers of the two 
programs for their invitation. The author also thanks 
M. Geck, L. Iancu and J. Guilhot for many fruitful discussions.

\tableofcontents

\section{Hecke algebras} 

\medskip

\subsection{Preliminaries}
Let $(W,S)$ be a Coxeter system ($S$ and $W$ can be infinite). 
If $s$, $t \in S$, we shall write $s \sim t$ if they are conjugate 
in $W$. Let $\Sba=S/\!\!\sim$. 
If $s \in S$, we denote by $\sba$ its class in $\Sba$. Let 
$\ZM[\Sba]$ the free $\ZM$-module with basis $\Sba$. 

Let $\ell : W \to \NM$ be the length function associated 
with $S$. If $\o \in \Sba$ and if $w \in W$, we denote by 
$\ell_\o(w)$ the number of occurrences of elements of $\o$ 
in a reduced expression of $w$ (it is well-known that it does
not depend on the choice of the reduced expression). We set 
$$\fonction{\ellb}{W}{\ZM[\Sba]}{w}{\DS{\sum_{\o \in \Sba}} 
\ell_\o(w) \o.}$$

If $\G$ is an abelian group, a map $\ph : W \to \G$ 
is called a {\it weight function} if $\ph(ww')=\ph(w)+\ph(w')$ 
for all $w$, $w' \in W$ such that $\ell(ww')=\ell(w)+\ell(w')$ 
(see \cite[\S 3.1]{lusztig}). 
The maps $\ell_\o$ (viewed as functions with values in $\ZM$) 
and $\ellb$ are weight functions. In fact, the map $\ellb$ is universal 
in the following sense:

\bigskip

\begin{lem}\label{factorisation poids}
Let $\ph : W \to \G$ be a weight function. Then there exists 
a unique morphism of groups $\phba : \ZM[\Sba] \to \G$ 
such that $\ph = \phba \circ \ellb$. 
\end{lem}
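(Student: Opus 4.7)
The plan is to start from the uniqueness assertion, which immediately forces the definition of $\phba$. Indeed, since $\ellb(s)=\sba$ for every $s\in S$, any homomorphism $\phba:\ZM[\Sba]\to\G$ satisfying $\phba\circ\ellb=\ph$ must send $\sba$ to $\ph(s)$. As $\ZM[\Sba]$ is free abelian on $\Sba$, this prescription uniquely determines $\phba$ provided it is consistent, so the whole lemma reduces to two points: (i) showing that the value $\ph(s)$ depends only on the class $\sba\in\Sba$ and not on the representative $s\in S$, and (ii) verifying the identity $\phba\bigl(\ellb(w)\bigr)=\ph(w)$ for every $w\in W$.

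For point (i), I would take $s,t\in S$ with $s\sim t$. Using the standard fact about conjugacy of simple reflections in a Coxeter group, it suffices to treat the case where $s$ and $t$ are linked by a single braid relation with odd bond, i.e.\ $m_{st}=2k+1$ for some $k\ge 1$. Let $w_{st}=\underbrace{sts\cdots}_{2k+1}=\underbrace{tst\cdots}_{2k+1}$ be the longest element of the parabolic subgroup $\langle s,t\rangle$. Both decompositions are reduced, so by iterating the defining property of a weight function I obtain
$$(k+1)\ph(s)+k\ph(t)\;=\;\ph(w_{st})\;=\;k\ph(s)+(k+1)\ph(t),$$
which gives $\ph(s)=\ph(t)$ in $\G$. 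Since conjugacy in $W$ between two simple reflections is generated by such elementary steps, this proves that $s\mapsto\ph(s)$ factors through $\Sba$, and hence defines a unique group homomorphism $\phba:\ZM[\Sba]\to\G$ with $\phba(\sba)=\ph(s)$.

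For point (ii), I would fix $w\in W$ and choose a reduced expression $w=s_1 s_2\cdots s_n$. Iterating the weight function property yields $\ph(w)=\sum_{i=1}^{n}\ph(s_i)$; on the other hand, by the very definition of $\ellb$, one has $\ellb(w)=\sum_{\o\in\Sba}\ell_\o(w)\,\o=\sum_{i=1}^{n}\overline{s_i}$. Applying $\phba$ and using $\phba(\overline{s_i})=\ph(s_i)$ gives the desired equality $\phba(\ellb(w))=\ph(w)$.

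I do not expect any serious obstacle. The only non-formal ingredient is the computation with the longest element of a dihedral parabolic subgroup in point (i); everything else is a direct unwinding of definitions together with the freeness of $\ZM[\Sba]$. The argument also makes clear why the partition into classes of conjugate simple reflections is exactly the right quotient to consider.
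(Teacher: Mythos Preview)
Your argument is correct: you have carefully established both well-definedness (via the dihedral computation showing $\ph$ is constant on $\sim$-classes) and the factorisation identity, and uniqueness follows from freeness of $\ZM[\Sba]$. The paper's own proof consists of the single word ``Clear,'' so you have simply written out in full the details that the author considered routine; there is no alternative approach to compare.
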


\bigskip

\begin{proof}
Clear.
\end{proof}

\bigskip

Let $\maps(\Sba,\G)$ be the set of maps $\Sba \to \G$ and let 
$\weight(W,\G)$ be the set of weight functions $W \to \G$. 
The lemma \ref{factorisation poids} shows that there are canonical 
bijections 
$$\weight(W,\G) \stackrel{\sim}{\longleftrightarrow} \maps(\Sba,\G) 
\stackrel{\sim}{\longleftrightarrow} \Hom(\ZM[\Sba],\G).$$
We shall identify these three sets all along this paper. 
More precisely, we shall work with maps $\ph : \Sba \to \G$ 
that we shall see, according to our needs, as morphisms of groups 
$\ZM[\Sba] \to \G$ or as weight functions $W \to \G$. In particular, 
we can talk as well about $\ph(w)$ (for $w \in W$) as about 
$\ph(\l)$ (for $\l \in \ZM[\Sba]$): we hope that it will not lead 
to some confusion. For instance, $\Ker \ph$ is a subgroup 
of $\ZM[\Sba]$ (and not of $W$!).

\bigskip

\subsection{Hecke algebras with unequal parameters} 
Let us first fix the notation that will be used throughout this paper.

\medskip

\begin{quotation}
\noindent{\bf Notation.} {\it Let $\G$ be an abelian group and 
let $\ph : \Sba \to \G$ be a map.} 
\end{quotation}

\medskip

\noindent We shall use an exponential notation for the group algebra of $\G$: 
$\ZM[\G]=\DS{\mathop{\oplus}_{\g \in \G}} \ZM e^\g$, where $e^\g\cdot e^{\g'}=e^{\g+\g'}$ for all $\g$, $\g' \in \G$. We then denote by 
$\HC(W,S,\ph)$ the {\it Hecke algebra with parameter $\ph$} that is, 
the free $\ZM[\G]$-module with basis $(T_w)_{w \in W}$ endowed 
with the unique $\ZM[\G]$-bilinear associative multiplication 
completely determined by the following rules:
$$\begin{cases}
T_w T_{w'} = T_{ww'} & \text{if $\ell(ww')=\ell(w)+\ell(w')$,}\\
(T_s - e^{\ph(s)})(T_s+e^{-\ph(s)})=0 & \text{if $s \in S$.}
\end{cases}$$
If necessary, we denote by $T_w^\ph$ the element $T_w$ to emphasize 
that it lives in the Hecke algebra with parameter $\ph$. 

This algebra is endowed with several involutions. We shall use 
only the following: if $\g \in \G$ and $w\in W$, we set  $\overline{e^\g}=e^{-\g}$ and $\overline{T}_w=T_{w^{-1}}^{-1}$ 
(note that $T_w$ is invertible). This extends by $\ZM$-linearity 
to a $\ZM[\G]$-antilinear map $\HC(W,S,\ph) \to \HC(W,S,\ph)$, 
$h \mapsto \hov$ which is an involutive antilinear automorphism 
of ring. 

\medskip

The previous construction is functorial. If $\r : \G \to \G'$ is a 
morphism of abelian groups, then $\r$ induces a map 
$$\r_* : \HC(W,S,\ph) \longto \HC(W,S,\r \circ \ph)$$
defined as the unique $\ZM[\G]$-linear map sending $T_w^\ph$ on 
$T_w^{\r \circ \ph}$ (here, $\HC(W,S,\r \circ \ph)$ 
is viewed as a $\ZM[\G]$-algebra through the morphism 
of rings $\ZM[\G] \to \ZM[\G']$ induced by $\r$). It is then 
easily checked that 
\equat\label{rho morphisme}
\text{\it $\r_*$ is a morphism of $\ZM[\G]$-algebras.}
\endequat
If $h \in \HC(W,S,\ph)$, then 
\equat\label{rho bar}
\overline{\r_*(h)}=\r_*(\overline{h}).
\endequat
Moreover, if $\s : \G' \to \G''$ is another morphism of abelian groups, then 
\equat\label{covariance rho}
(\s \circ \r)_* = \s_* \circ \r_*.
\endequat
The next lemma is an obvious consequence:

\bigskip

\begin{lem}\label{rho bij}
The morphism of groups $\r$ is injective (respectively surjective, 
respectively bijective) if and only if the morphism of algebras $\r_*$ is.
\end{lem}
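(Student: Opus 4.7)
The plan is to reduce this to the corresponding statement for the ring morphism $\ZM[\r] : \ZM[\G] \to \ZM[\G']$ induced by $\r$, which is a standard and transparent fact about the functor $\G \mapsto \ZM[\G]$. The bridge is that both Hecke algebras are free over their respective group algebras on the common indexing set $W$, and by construction $\r_*$ sends $T_w^\ph$ to $T_w^{\r \circ \ph}$ for every $w \in W$.

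First I would unpack $\r_*$ on the basis expansion. Any $h \in \HC(W,S,\ph)$ is uniquely of the form $h = \sum_{w \in W} a_w T_w^\ph$ with $a_w \in \ZM[\G]$, and using the $\ZM[\G]$-linearity of $\r_*$ (where the target is viewed as a $\ZM[\G]$-module via $\ZM[\r]$) one gets
$$\r_*(h) = \sum_{w \in W} \ZM[\r](a_w)\, T_w^{\r \circ \ph}.$$
Since $(T_w^{\r \circ \ph})_{w \in W}$ is a $\ZM[\G']$-basis of $\HC(W,S,\r \circ \ph)$, the coefficients $\ZM[\r](a_w) \in \ZM[\G']$ vanish independently, and every element of the target is determined by its coefficients $b_w \in \ZM[\G']$. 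Therefore $\r_*$ is injective (resp. surjective) if and only if $\ZM[\r]$ is injective (resp. surjective).

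Second, I would check that $\ZM[\r]$ has each of these properties exactly when $\r$ does. Surjectivity is immediate: $\im(\ZM[\r])$ is the $\ZM$-span of $\{e^{\r(\g)} : \g \in \G\} = \{e^{\g'} : \g' \in \r(\G)\}$, and this equals $\ZM[\G']$ iff $\r(\G)=\G'$. For injectivity, $(e^\g)_{\g \in \G}$ is a $\ZM$-basis of $\ZM[\G]$; if $\r$ is injective, the images $(e^{\r(\g)})_{\g \in \G}$ are still $\ZM$-linearly independent, whereas if $\r(\g)=\r(\g')$ for some $\g \ne \g'$, then $e^\g - e^{\g'}$ is a non-zero element of $\ker \ZM[\r]$.

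I do not expect any obstacle: the statement is essentially a formal consequence of the functoriality of the group algebra construction combined with the freeness of $\HC(W,S,\ph)$ over $\ZM[\G]$ on the basis $(T_w)_{w \in W}$.
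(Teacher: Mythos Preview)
Your proof is correct and is precisely the natural expansion of what the paper leaves implicit: the paper states the lemma as ``an obvious consequence'' of the preceding setup without further argument, and your reduction to the injectivity/surjectivity of $\ZM[\r]$ via the common $W$-indexed bases is exactly how one makes this ``obvious'' step explicit.
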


\bigskip

\subsection{Generic Hecke algebra} 
Let $R$ be the group algebra $\ZM[\ZM[\Sba]]$. 
Let $i : \Sba \to \ZM[\Sba]$ be the canonical map. 
The Hecke algebra $\HC(W,S,i)$ will then be denoted by 
$\HCB(W,S)$: it is called the {\it generic Hecke algebra}. 

It is universal in the following sense: if we identify the map 
$\ph : \Sba \to \G$ with the morphism of groups $\ph : \ZM[\Sba] \to \G$, 
then the Hecke algebra $\HC(W,S,\ph)$ is now equipped with a canonical 
morphism of $R$-algebras $\ph_* : \HCB(W,S) \to \HC(W,S,\ph)$, so that  $\HC(W,S,\ph)$ is a specialization of $\HCB(W,S)$, i.e.
$$\HC(W,S,\ph)=\ZM[\G] \otimes_R \HC(W,S).$$

\bigskip

\section{Kazhdan-Lusztig cells}

\medskip

\subsection{Kazhdan-Lusztig basis} 
To define the Kazhdan-Lusztig basis, we shall need 
to work all along this paper 
under the following hypothesis:

\medskip

\begin{quotation}
\noindent{\bf Hypothesis and notation.} {\it 
Until the end of this paper, we assume that $\G$ is endowed with a total 
order $\le$ compatible with the group law. We denote respectively by 
$\G_{>0}$, $\G_{\geqslant 0}$, $\G_{< 0}$ and $\G_{\leqslant 0}$ 
the set of positive, non-negative, negative, non-positive elements 
of $\G$.} 
\end{quotation}

\medskip

If $E$ is any subset of $\G$, we set 
$\ZM[E]=\DS{\mathop{\oplus}_{\g \in E}} \ZM e^\g$. For instance, 
$\ZM[\G_{\leqslant 0}]$ is a subring of $\ZM[\G]$ and 
$\ZM[\G_{<0}]$ is an ideal of $\ZM[\G_{\leqslant 0}]$. 
Let  
$$\HC_{<0}(W,S,\ph) = \mathop{\oplus}_{w \in W} \ZM[\G_{< 0}] ~T_w.$$
Then, if $w \in W$, there exists \cite[Theorem 5.2]{lusztig} a unique element 
$C_w \in \HC(W,S,\ph)$ such that 
$$\begin{cases}
\overline{C}_w = C_w,& \\
C_w \equiv T_w \mod \HC_{<0}(W,S,\ph).&
\end{cases}$$
Again, if necessary, the element $C_w$ will be denoted by $C_w^\ph$. 
%(ou encore $C_w^{\ph,\le}$ si diff\'erentes structures de 
%groupe totalement ordonn\'e sont consid\'er\'ees sur le 
%m\^eme groupe $\G$). 

\def\pre#1#2{\leqslant_{#1}^{#2}}

The family $(C_w)_{w \in W}$ is a $\ZM[\G]$-basis of 
$\HC(W,S,\ph)$ (called the {\it Kazhdan-Lusztig basis} 
\cite[Theorem 5.2]{lusztig}). If $x$, $y \in W_n$, then we shall write 
$x \gauche y$ (respectively $x \droite y$, respectively $x \bilatere y$) 
if there exists $h \in \HC_n$ such that the coefficient of $C_x$ 
in the decomposition of $hC_y$ (respectively $C_y h$, respectively 
$hC_y$ or $C_y h$) is non-zero. We denote by $\pre{L}{\ph}$ 
(respectively $\pre{R}{\ph}$, respectively $\pre{LR}{\ph}$) the transitive closure 
of $\gauche$ (respectively $\droite$, respectively $\bilatere$). 
Then $\pre{L}{\ph}$, $\pre{R}{\ph}$ and $\pre{LR}{\ph}$ are preorders on $W$ 
and we denote respectively by $\sim_L^\ph$, $\sim_R^\ph$ and $\sim_{LR}^\ph$ 
the associated equivalence relations \cite[Chapter 8]{lusztig}. An 
equivalence class for $\sim_L^\ph$ (respectively $\sim_R^\ph$, respectively $\sim_{LR}^\ph$) 
is called a {\it left} (respectively {\it right}, respectively {\it two-sided}) 
{\it cell} (for $(W,S,\ph)$). We recall the following result 
\cite[\S 8.1]{lusztig}: if $x$, $y \in W_n$, then
\equat\label{siml simr}
x \sim_L^\ph y \Longleftrightarrow x^{-1} \sim_R^\ph y^{-1}.
\endequat

If $w \in W$, and if $? \in \{L, R, LR\}$, 
we set
$$\Cell_?^\ph(w)=\{x \in W~|~x \sim_?^\ph w\}.$$

\bigskip

\subsection{Cell representations} 
Let $? \in \{L,R,LR\}$. For simplification, we define a {\it $?$-ideal} 
to be a left ideal if $?=L$, a right ideal if $? =R$ and a two-sided 
ideal if $?=LR$. Similarly, if $A$ is a ring, a {\it $?$-module} 
(or an $A$-$?$-module) is a left $A$-module if $?=L$, 
a right $A$-module if $?=R$ and an $(A,A)$-bimodule if $?=LR$. 

If $C$ is a cell in $W$ for $\sim_?^\ph$, we set, 
following \cite[\S 8.3]{lusztig}, 
$$\HC(W,S,\ph)_{\pre{?}{\ph} C}=\mathop{\oplus}_{w \pre{?}{\ph} C} \ZM[\G]~C_w,
\quad
\HC(W,S,\ph)_{<_?^\ph C}=\mathop{\oplus}_{w <_?^\ph C} \ZM[\G]~C_w$$
$$M_C^{?,\ph}=\HC(W,S,\ph)_{\pre{?}{\ph} C}/\HC(W,S,\ph)_{<_?^\ph C}.
\leqno{\text{and}}$$
Then, by definition, $\HC(W,S,\ph)_{\pre{?}{\ph} C}$ and 
$\HC(W,S,\ph)_{<_?^\ph C}$ are $?$-ideals and $M_C^{?,\ph}$ 
is an $\HC(W,S,\ph)$-$?$-module. Note that $M_C^{?,\ph}$ is a 
free $\ZM[\G]$-module with basis the image of $(C_w)_{w \in C}$.

Let $\aug : \ZM[\G] \to \ZM$ be the augmentation morphism and view $\ZM$ 
(or $\QM$) as a $\ZM[\G]$-algebra through $\aug$. We have an isomorphism 
of rings $\ZM \otimes_{\ZM[\G]} \HC(W,S,\ph) \simeq \ZM W$. Let 
$\ZM M_C^{?,\ph}$ (respectively $\QM M_C^{?,\ph}$) be the $\ZM W$-$?$-module 
(respectively $\QM W$-$?$-module) $\ZM \otimes_{\ZM[\G]} M_C^{?,\ph}$ 
(respectively $\QM \otimes_{\ZM[\G]} M_C^{?,\ph}$). 

If $W$ is {\it finite}, we denote by 
$\chi_C^{?,\ph}$ the character of $\QM M_C^{?,\ph}$ (if $?=LR$, 
it is a character of $W \times W$). 

\bigskip

\subsection{Strictly increasing morphisms} 
The main goal of this paper is to study how the partition into 
cells behaves whenever the datum $(\G,\le,\ph)$ varies. The first 
easy remark is that this partition does not change by composition 
with a strictly increasing morphism of groups:

\bigskip

\begin{prop}\label{strictement croissant}
Let $\G'$ be a totally ordered abelian group, let 
$\r : \G \to \G'$ be a strictly increasing morphism of groups and let 
$? \in \{L,R,LR\}$. Then:
\begin{itemize}
 \itemth{a} $\r_*$ is injective.

\itemth{b} If $w \in W$, then $\r_*(C_w^\ph)=C_w^{\r \circ \ph}$. 

\itemth{c} The relations $\le_?^\ph$ et $\le_?^{\r \circ \ph}$ coincide 
(as well as the relations $\sim_?^\ph$ et $\sim_?^{\r \circ \ph}$). 

\itemth{d} If $w \in W$, then  $\Cell_?^\ph(w)=\Cell_?^{\r \circ \ph}(w)$. 
\end{itemize}
\end{prop}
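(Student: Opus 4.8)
The plan is to prove the four assertions in the order (a), (b), (c), (d), since each feeds into the next. For (a): a strictly increasing morphism of ordered abelian groups is in particular injective as a group morphism (if $\r(\g)=0$ with $\g>0$ then $\r(\g)>\r(0)=0$, a contradiction, and similarly for $\g<0$). By Lemma \ref{rho bij}, injectivity of $\r$ implies injectivity of $\r_*$, so (a) is immediate.

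For (b), the strategy is to characterise $C_w^{\r\circ\ph}$ by the two defining properties from \cite[Theorem 5.2]{lusztig} and to check that $\r_*(C_w^\ph)$ satisfies them. First, $\overline{\r_*(C_w^\ph)}=\r_*(\overline{C_w^\ph})=\r_*(C_w^\ph)$ by \eqref{rho bar} and the bar-invariance of $C_w^\ph$. Second, I need $\r_*(C_w^\ph)\equiv T_w^{\r\circ\ph}\mod \HC_{<0}(W,S,\r\circ\ph)$. Writing $C_w^\ph=T_w^\ph+\sum_{y}a_{y,w}T_y^\ph$ with $a_{y,w}\in\ZM[\G_{<0}]$, apply $\r_*$: since $\r_*$ is $\ZM[\G]$-linear and sends $T_y^\ph$ to $T_y^{\r\circ\ph}$, we get $\r_*(C_w^\ph)=T_w^{\r\circ\ph}+\sum_y \r(a_{y,w})T_y^{\r\circ\ph}$, where $\r(a_{y,w})$ denotes the image of $a_{y,w}$ under the ring morphism $\ZM[\G]\to\ZM[\G']$ induced by $\r$. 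The key point is that this induced morphism sends $\ZM[\G_{<0}]$ into $\ZM[\G'_{<0}]$, precisely because $\r$ is strictly increasing: $\g<0$ forces $\r(\g)<0$. Hence $\r(a_{y,w})\in\ZM[\G'_{<0}]$, so $\r_*(C_w^\ph)$ lies in the correct congruence class. By uniqueness in \cite[Theorem 5.2]{lusztig}, $\r_*(C_w^\ph)=C_w^{\r\circ\ph}$.

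For (c) and (d): since $\r_*$ is a $\ZM[\G]$-algebra morphism sending the Kazhdan--Lusztig basis $(C_w^\ph)_w$ to the Kazhdan--Lusztig basis $(C_w^{\r\circ\ph})_w$ (by (b)), it transports the structure constants in the obvious way — more precisely, applying $\r_*$ to $h\,C_y^\ph$ and using that $\r_*$ is injective shows that the coefficient of $C_x^\ph$ in $h\,C_y^\ph$ is nonzero if and only if the coefficient of $C_x^{\r\circ\ph}$ in $\r_*(h)\,C_y^{\r\circ\ph}$ is nonzero. One must note that as $h$ ranges over $\HC(W,S,\ph)$, $\r_*(h)$ ranges over (at least enough of) $\HC(W,S,\r\circ\ph)$; in fact for the relation $x\gauche y$ it suffices to let $h$ range over the basis elements $C_z$, whose images are basis elements, so the two relations $\gauche$ agree on the nose. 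Taking transitive closures gives $\le_?^\ph{=}\le_?^{\r\circ\ph}$ and hence the equivalence relations $\sim_?^\ph$ and $\sim_?^{\r\circ\ph}$ coincide, which is (c); assertion (d) is then just the restatement of (c) in terms of the cells $\Cell_?^\ph(w)$.

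I do not anticipate a serious obstacle here: the whole proposition is essentially the observation that "strictly increasing" is exactly the hypothesis needed to make $\r$ respect the sign conditions ($\G_{<0}\to\G'_{<0}$) that enter the definition of the Kazhdan--Lusztig basis. The only point requiring a little care is the direction "$\r_*(h)$ exhausts enough of $\HC(W,S,\r\circ\ph)$" in the proof of (c) — this is handled cleanly by reducing to $h=C_z$ rather than trying to invert $\r_*$, which need not be surjective.
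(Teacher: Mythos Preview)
Your proof is correct and follows essentially the same route as the paper's: injectivity of $\r$ gives (a) via Lemma~\ref{rho bij}; bar-invariance via \eqref{rho bar} together with $\r(\ZM[\G_{<0}])\subseteq\ZM[\G'_{<0}]$ and uniqueness of the Kazhdan--Lusztig basis give (b); and (c), (d) follow. Your treatment of (c) is in fact more careful than the paper's (which simply says ``(c) follows immediately from (a) and (b)''): your observation that one may test the relation $\gauche$ on basis elements $h=C_z$, whose images under $\r_*$ are again basis elements by (b), cleanly handles the potential issue that $\r_*$ need not be surjective.
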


\begin{proof}
The injectivity of $\r_*$ follows from the fact that $\r$ 
itself is injective (see Lemma \ref{rho bij}). 
Hence (a) holds. Let us show (b). Let $C=\r_*(C_w^\ph)$. 
Then, by \ref{rho bar}, we get $\Cov=C$. The fact that $\r$ is 
strictly increasing implies that $\r_*(\ZM[\G_{<0}]) 
\subseteq \ZM[\G_{<0}^\prime]$, so that 
\equat\label{rho positif}
\r_*(\HC_{<0}(W,S,\ph)) \subseteq \HC_{< 0}(W,S,\r \circ \ph).
\endequat
Therefore, $C - T_w^{\r \circ \ph} \in \HC_{<0}(W,S,\r \circ \ph)$. 
So $C=C_w^{\r \circ \ph}$ by the characterization of the elements 
of the Kazhdan-Lusztig basis. 

Now (c) follows immediately from (a) and (b), while (d) 
is an immediate consequence of (c).
\end{proof}

\bigskip

\subsection{Changing signs}
We shall show in this subsection that changing the sign of some 
values of the function $\ph$ has no effect on the partition into 
cells (and that it has little effect on the cell representations). 
For this, we shall work under the following hypothesis:

\medskip

\begin{quotation}
\noindent{\bf Hypothesis and notation.} 
{\it In this subsection, and only in this subsection, 
we fix a partition $S=S_+ \dotcup S_-$ of $S$ such that no element of 
$S_+$ is conjugate to an element of $S_-$. Let $\ph' : \Sba \to \G$ 
be the (well-defined) map such that
$$\ph'(\sba)=\begin{cases}
\ph(\sba) & \text{if $s \in S_+$,}\\
-\ph(\sba) & \text{if $s \in S_-$.}
\end{cases}$$}
\end{quotation}

\medskip

If $w \in W$, we set  
$$\ell_\pm(w)=\sum_{\o \in \Sba_\pm} \ell_\o(w),$$
so that $\ell(w)=\ell_+(w) + \ell_-(w)$. Let 
$\th : \HC(W,S,\ph) \longto \HC(W,S,\ph')$ be the unique 
$\ZM[\G]$-linear map such that 
$$\th(T_w^\ph)=(-1)^{\ell_-(w)} T_w^{\ph'}.$$
An elementary computation shows that:

\bigskip

\begin{prop}\label{signe}
The map $\th : \HC(W,S,\ph) \longto \HC(W,S,\ph')$ 
is an isomorphism of $\ZM[\G]$-algebras. It sends $\HC_{<0}(W,S,\ph)$ 
onto $\HC_{<0}(W,S,\ph')$. If $h \in \HC(W,S,\ph)$, then $\th(\overline{h})=\overline{\th(h)}$.  
Consequently, if $w \in W$, then 
$$\th(C_w^\ph) = (-1)^{\ell_-(w)} C_w^{\ph'}.$$
\end{prop}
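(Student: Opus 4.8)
The plan is to verify directly that $\th$ respects the three pieces of structure used to characterize the Kazhdan-Lusztig basis, namely: it is a $\ZM[\G]$-algebra isomorphism, it preserves the submodule $\HC_{<0}$, and it commutes with the bar involution; the formula $\th(C_w^\ph)=(-1)^{\ell_-(w)}C_w^{\ph'}$ then drops out of the uniqueness statement in \cite[Theorem 5.2]{lusztig}. First I would check that $\th$ is an algebra homomorphism. Since $\th$ is $\ZM[\G]$-linear and sends the $\ZM[\G]$-basis $(T_w^\ph)$ to $\pm$ the basis $(T_w^{\ph'})$, it is automatically a $\ZM[\G]$-module isomorphism (the sign $(-1)^{\ell_-(w)}$ being a unit); the content is multiplicativity. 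It suffices to check $\th(T_w^\ph T_s^\ph)=\th(T_w^\ph)\th(T_s^\ph)$ for $s\in S$, using the standard multiplication rules $T_w T_s = T_{ws}$ if $\ell(ws)>\ell(w)$ and $T_w T_s = T_{ws} + (e^{\ph(s)}-e^{-\ph(s)})T_w$ otherwise. The point is that $\ell_-(ws)=\ell_-(w)\pm 1$ according as $s\in S_-$ and $\ell(ws)=\ell(w)\pm1$, and the sign $(-1)^{\ell_-}$ is multiplicative exactly to the extent that $\ell_-$ is additive on reduced products — which is what makes $\ell_-$ a weight function. One checks the four cases ($\ell(ws)\gtrless\ell(w)$, $s\in S_+$ or $S_-$) and sees the signs match: if $s\in S_-$ and $\ell(ws)<\ell(w)$ one needs $(-1)^{\ell_-(w)}=-(-1)^{\ell_-(ws)}$ together with $\ph'(s)=-\ph(s)$ turning $e^{\ph(s)}-e^{-\ph(s)}$ into its negative, and the two sign flips cancel. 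This is the computation the paper alludes to as ``elementary''.

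Next I would record that $\th(\HC_{<0}(W,S,\ph))=\HC_{<0}(W,S,\ph')$: this is immediate since $\th$ is $\ZM[\G]$-linear, sends $T_w^\ph$ to $\pm T_w^{\ph'}$, and $\HC_{<0}$ is by definition $\bigoplus_w \ZM[\G_{<0}]T_w$; the sign does not affect membership. Then I would check that $\th$ commutes with the bar involution, i.e. $\th(\hov)=\overline{\th(h)}$. Since both $\th$ and bar are $\ZM[\G]$-(anti)linear it is enough to check this on the $T_w^\ph$, and since bar is multiplicative and $\th$ is a ring homomorphism it is enough to check it on the generators $T_s^\ph$ and on scalars $e^\g$. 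On scalars both sides give $e^{-\g}$ trivially. For $s\in S$, $\overline{T_s^\ph}=(T_s^\ph)^{-1}=T_s^\ph-(e^{\ph(s)}-e^{-\ph(s)})$ (computed from the quadratic relation), so I apply $\th$ and compare with $\overline{\th(T_s^\ph)}=\overline{\pm T_s^{\ph'}}=\pm\big(T_s^{\ph'}-(e^{\ph'(s)}-e^{-\ph'(s)})\big)$; again the case $s\in S_-$ works because the sign $-1$ and the replacement $\ph'(s)=-\ph(s)$ conspire, while for $s\in S_+$ everything is literally the same identity. (One should note $\ell_-(s)=0$ or $1$, so the sign on $T_s$ is $+1$ or $-1$ accordingly, matching $s\in S_+$ vs.\ $s\in S_-$.)

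Finally, with these three facts in hand I would argue: fix $w\in W$ and set $C=(-1)^{\ell_-(w)}\th(C_w^\ph)$. Wait — more directly, set $D=\th(C_w^\ph)\in\HC(W,S,\ph')$. From bar-compatibility and $\overline{C_w^\ph}=C_w^\ph$ we get $\overline{D}=D$... but $D$ is congruent to $(-1)^{\ell_-(w)}T_w^{\ph'}$, not $T_w^{\ph'}$; so instead I consider $D'=(-1)^{\ell_-(w)}\th(C_w^\ph)$. Then $\overline{D'}=(-1)^{\ell_-(w)}\overline{\th(C_w^\ph)}=(-1)^{\ell_-(w)}\th(\overline{C_w^\ph})=(-1)^{\ell_-(w)}\th(C_w^\ph)=D'$, using that $(-1)^{\ell_-(w)}\in\ZM$ is bar-fixed. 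And $D'\equiv (-1)^{\ell_-(w)}\th(T_w^\ph)=(-1)^{\ell_-(w)}(-1)^{\ell_-(w)}T_w^{\ph'}=T_w^{\ph'}\pmod{\HC_{<0}(W,S,\ph')}$, using $C_w^\ph\equiv T_w^\ph$ and that $\th$ preserves $\HC_{<0}$. By the uniqueness in \cite[Theorem 5.2]{lusztig} applied to $(W,S,\ph')$, we conclude $D'=C_w^{\ph'}$, i.e.\ $\th(C_w^\ph)=(-1)^{\ell_-(w)}C_w^{\ph'}$. The main obstacle is purely bookkeeping: keeping the four sign cases straight in the multiplicativity check and making sure the sign convention on $T_s$ (via $\ell_-(s)$) is consistent with the one on general $T_w$; there is no conceptual difficulty, which is why the paper simply says ``an elementary computation shows''.
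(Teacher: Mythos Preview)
Your proposal is correct and is precisely the ``elementary computation'' the paper alludes to without writing out: verify multiplicativity on $T_w^\ph T_s^\ph$ by the four sign cases, observe that $\HC_{<0}$ is preserved because $\th$ acts by signs on the standard basis, check bar-compatibility on the algebra generators $T_s^\ph$ (using that both $\th\circ\overline{\phantom{h}}$ and $\overline{\phantom{h}}\circ\th$ are ring homomorphisms which are $\ZM[\G]$-antilinear), and then invoke the uniqueness in \cite[Theorem 5.2]{lusztig} for $D'=(-1)^{\ell_-(w)}\th(C_w^\ph)$. The paper gives no proof beyond the phrase ``an elementary computation shows'', so there is nothing further to compare.
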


\bigskip

\begin{coro}\label{signe cellules}
If $? \in \{L,R,LR\}$, then the relations $\pre{?}{\ph}$ and 
$\pre{?}{\ph'}$ coincide. Similarly, the relations 
$\sim_?^\ph$ and $\sim_?^{\ph'}$ coincide. 
\end{coro}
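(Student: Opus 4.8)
The plan is to derive Corollary \ref{signe cellules} as a direct consequence of Proposition \ref{signe}, in exactly the same way that part (c) of Proposition \ref{strictement croissant} was derived from parts (a) and (b) of that proposition. The key point is that the map $\th : \HC(W,S,\ph) \longto \HC(W,S,\ph')$ is a $\ZM[\G]$-algebra isomorphism that carries the Kazhdan-Lusztig basis $(C_w^\ph)_{w \in W}$ to the basis $\bigl((-1)^{\ell_-(w)} C_w^{\ph'}\bigr)_{w \in W}$, i.e.\ it matches the two Kazhdan-Lusztig bases up to a sign on each basis vector.

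First I would recall the definition of $\gauche$ (and of $\droite$, $\bilatere$): $x \gauche y$ means that the coefficient of $C_x^\ph$ in $h C_y^\ph$ is non-zero for some $h \in \HC(W,S,\ph)$. Now apply $\th$. Since $\th$ is an algebra homomorphism, $\th(h C_y^\ph) = \th(h)\,\th(C_y^\ph) = (-1)^{\ell_-(y)}\,\th(h)\,C_y^{\ph'}$. Expanding $\th(h)C_y^{\ph'}$ in the basis $(C_x^{\ph'})_{x\in W}$, the coefficient of $C_x^{\ph'}$ is, up to the nonzero sign $(-1)^{\ell_-(x)-\ell_-(y)}$, equal to the coefficient of $C_x^\ph$ in $hC_y^\ph$; the signs are irrelevant since we only care whether the coefficient vanishes. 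As $\th$ is bijective, $h \mapsto \th(h)$ ranges over all of $\HC(W,S,\ph')$ as $h$ ranges over $\HC(W,S,\ph)$. Hence $x \gauche y$ for $(W,S,\ph)$ holds if and only if the corresponding relation holds for $(W,S,\ph')$. The same argument applies verbatim to $\droite$ and to $\bilatere$ (for the latter, using multiplication on both sides). Passing to transitive closures, the preorders $\pre{L}{\ph}$, $\pre{R}{\ph}$, $\pre{LR}{\ph}$ coincide respectively with $\pre{L}{\ph'}$, $\pre{R}{\ph'}$, $\pre{LR}{\ph'}$, and therefore so do the associated equivalence relations $\sim_?^\ph$ and $\sim_?^{\ph'}$.

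There is essentially no obstacle here: the entire content is packaged in Proposition \ref{signe}, whose proof (an ``elementary computation'') is the real work. The only thing to be slightly careful about is that $\th$ scales each basis element $C_w$ by a sign depending on $w$, rather than being a strict isometry of bases as in Proposition \ref{strictement croissant}(b); but since the definition of the relations $\gauche$, $\droite$, $\bilatere$ only refers to the \emph{non-vanishing} of structure coefficients, and a nonzero scalar multiple of a nonzero coefficient is still nonzero, these signs wash out completely. One should phrase the proof so as to make this observation explicit. I would write the proof in two or three sentences, noting that it follows ``immediately from Proposition \ref{signe}'' by the same reasoning as in Proposition \ref{strictement croissant}(c), the only new feature being the harmless signs $(-1)^{\ell_-(w)}$.
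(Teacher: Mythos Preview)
Your proposal is correct and matches the paper's approach: the paper states this corollary with no proof at all, treating it as immediate from Proposition~\ref{signe}, and your argument spells out precisely the intended reasoning (the algebra isomorphism $\th$ matches the Kazhdan--Lusztig bases up to signs, and the relations $\gauche$, $\droite$, $\bilatere$ depend only on nonvanishing of coefficients). Your observation that the signs $(-1)^{\ell_-(w)}$ are harmless is exactly the point.
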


\bigskip

Let $\g : W \to \{1,-1\}$ denote the unique linear character of $W$ such that 
$$\g(s)=
\begin{cases}
1 & \text{if $s \in S_+$},\\
-1 & \text{if $s \in S_-$}.
\end{cases}$$
In other words, $\g(w)=(-1)^{\ell_-(w)}$. 
Let $\ZM_\g$ (respectively $\ZM_{\g \boxtimes \g}$) 
denote the (left or right) $\ZM W$-module (respectively the 
$(\ZM W,\ZM W)$-bimodule) of $\ZM$-rank $1$ on which 
$W$ (respectively $W \times W$) acts through the linear character 
$\g$ (respectively the linear character $\g \boxtimes \g : W \times W \to \{1,-1\}$, 
$(w,w') \mapsto \g(w)\g(w')$). If $C$ is a cell for $\sim_?^\ph$, 
then an easy computation using the isomorphism $\th$ shows that 
\equat\label{module signe}
\ZM M_C^{?,\ph'} \simeq 
\begin{cases}
\ZM_\g \otimes_\ZM \ZM M_C^{?,\ph} & \text{if $?\in \{L,R\}$,}\\
\ZM_{\g \boxtimes \g} \otimes_\ZM \ZM M_C^{?,\ph} & \text{if $?=LR$.}
\end{cases}
\endequat

\bigskip

\subsection{Zero values of ${\boldsymbol{\ph}}$} 
It follows from Corollary \ref{signe cellules} that the computation 
of Kazhdan-Lusztig cells can be reduced to the case where $\ph$ 
has non-negative values. We shall study here what happens when 
$\ph$ vanishes on some elements of $S$. We shall work under 
the following hypothesis:

\medskip

\begin{quotation}
\noindent{\bf Hypothesis and notation.} 
{\it In this subsection, and only in this subsection, we fix a partition 
$S=I \dotcup J$ of $S$ such that no element of $I$ is conjugate to an 
element of $J$. Let $W_I$ denote the subgroup of $W$ generated by $I$ and 
we set 
$$\Jti=\{wtw^{-1}~|~w \in W_I\text{ and }t \in J\}.$$
Let $\Wti$ be the subgroup of $W$ generated by $\Jti$. 
We shall also assume that
$$\text{\it if $s \in I$, then $\ph(s)=0$.}\leqno{\hskip1.3cm(*)}$$
If $w \in W$, let $\ell_I(w)$ and $\ell_J(w)$ denote respectively 
the number of occurrences of elements of $I$ and $J$ in 
a reduced expression of $w$.}
\end{quotation}

\medskip

By a result of Gal \cite[Proposition 2.1]{gal} (see also 
\cite[Theorem 1.1]{bonnafe dyer}), $(\Wti,\Jti)$ is a Coxeter system and 
\equat\label{dyer bis}
W=W_I \ltimes \Wti
\endequat
We shall denote by $\ellt : \Wti \to \NM$ the length function on 
$\Wti$ with respect to $\Jti$. In fact \cite[Corollary 1.3]{bonnafe dyer},
if $w \in \Wti$, then
\equat\label{ltilde}
\ellt(w)=\ell_J(w).
\endequat
If $\tti \in \Jti$, we denote by $\nu(\tti)$ the unique element of 
$J$ such that $\tti$ is $W_I$-conjugate to $\nu(\tti)$ 
(see \cite[(1.3)]{bonnafe dyer}). We set 
$$\pht(\tti)=\ph(\nu(\tti)).$$
By \cite[Corollary 1.4]{bonnafe dyer} that, if 
$\tti$ and $\tti'$ are two elements of $\Jti$ which are conjugate 
in $\Wti$, then 
\equat\label{pht}
\pht(\tti)=\pht(\tti').
\endequat
This shows that we can define a Hecke $\ZM[\G]$-algebra 
$\HC(\Wti,\Jti,\pht)$. The group $W_I$ acts on $\Wti$ and stabilizes 
$\Jti$ and the map $\pht$, so it acts naturally on the Hecke algebra 
$\HC(\Wti,\Jti,\pht)$. We can then define the semidirect product of algebras 
$$W_I \ltimes \HC(\Wti,\Jti,\pht).$$
It is a $\ZM[\G]$-algebra with $\ZM[\G]$-basis 
$(x \cdot T_w^\pht)_{x \in W_I,w \in \Wti}$. 
Finally, let
$$\HCt=\mathop{\oplus}_{w \in \Wti} \ZM[\G]~T_w^\ph \quad \subseteq \HC(W,S,\ph).$$

\begin{prop}\label{semi-direct}
Recall that $\ph(s)=0$ if $s \in I$. Then:
\begin{itemize}
\itemth{a} $\HCt$ is a sub-algebra of $\HC(W,S,\ph)$.

\itemth{b} The unique $\ZM[\G]$-linear map 
$\th : W_I \ltimes \HC(\Wti,\Jti,\pht) \longto \HC(W,S,\ph)$ that sends 
$x \cdot T_w^\pht$ on $T_{xw}^\ph$ ($x \in W_I$, $w \in \Wti$) 
is an isomorphism of $\ZM[\G]$-algebras. It sends $\HC(\Wti,\Jti,\pht)$ 
isomorphically onto $\HCt$.

\itemth{c} If $x \in W_I$ and $w \in W$, then 
$C_x^\ph=T_x^\ph$, $C_{xw}^\ph=T_x^\ph C_w^\ph$, 
$C_{wx}^\ph=C_w^\ph T_x^\ph$. If moreover 
$w \in \Wti$, then $\th(C_w^\pht)=C_w^\ph$.
\end{itemize}
\end{prop}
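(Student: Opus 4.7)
My plan is to derive everything from two key consequences of the hypothesis $\ph(s)=0$ for $s \in I$, and then establish (b) by constructing the inverse of $\th$ as a ring homomorphism out of $\HC(W,S,\ph)$.

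First I would notice that since $\ph(s)=0$, the quadratic relation collapses to $T_s^2=1$ for every $s \in I$, so $T_s^{-1}=T_s$ and $\overline{T_s} = T_{s^{-1}}^{-1} = T_s$. A short case split on whether $\ell(sw)=\ell(w)+1$ or $\ell(sw)=\ell(w)-1$ (using $T_s^2=1$ in the shortening case) yields $T_s^\ph T_w^\ph = T_{sw}^\ph$ for every $w \in W$, and symmetrically $T_w T_s = T_{ws}$. Iterating gives $T_x T_v = T_{xv}$ and $T_v T_x = T_{vx}$ for every $x \in W_I$ and $v \in W$. Three consequences follow: the map $x \mapsto T_x^\ph$ identifies $\ZM[\G][W_I]$ with the subalgebra generated by $\{T_s : s \in I\}$; each $T_x^\ph$ is bar-invariant; and for $\tti = wtw^{-1} \in \Jti$ with $w \in W_I$, $t \in J$, the product $T_w T_t T_{w^{-1}}$ equals the basis element $T_\tti^\ph$, independently of the decomposition of $\tti$.

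For parts (a) and (b), the map $\th$ is manifestly a $\ZM[\G]$-linear bijection by the unique semidirect decomposition $W = W_I \ltimes \Wti$. To promote it to a ring isomorphism, I would build its inverse: define $\Phi : \HC(W,S,\ph) \to W_I \ltimes \HC(\Wti,\Jti,\pht)$ on the generators by $\Phi(T_s)=s$ for $s \in I$ and $\Phi(T_s)=T_s^\pht$ for $s \in J$ (using $J \subset \Jti$), and check the quadratic and braid relations. The quadratic relations are immediate in both cases. For the braid relation of length $m=m_{ss'}$, the pure cases $s,s'\in I$ and $s,s'\in J$ reduce to the braid in $W_I$ and in $\HC(\Wti,\Jti,\pht)$, respectively (the latter because the order of $ss'$ in $\Wti$ coincides with its order in $W$). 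For the mixed case $s \in I$, $s' \in J$, the hypothesis that no $I$-element is $W$-conjugate to a $J$-element forces $m$ to be even, since an odd $m$ would make $s$ conjugate to $s'$ inside the dihedral subgroup $\langle s, s'\rangle$. Writing $m=2k$, the sliding identity $s \cdot T_{s'}^\pht = T_{ss's}^\pht \cdot s$ in the semidirect product (with $ss's \in \Jti$) lets one compute both $(sT_{s'}^\pht)^k$ and $(T_{s'}^\pht s)^k$ and observe that their equality reduces exactly to the braid of length $k$ between $T_{s'}^\pht$ and $T_{ss's}^\pht$ inside $\HC(\Wti,\Jti,\pht)$, which holds because $s' \cdot ss's = (ss')^2$ has order exactly $k$ in $\Wti$. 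Hence $\Phi$ extends to a well-defined ring homomorphism; then $\th \circ \Phi$ agrees with the identity on the generators $T_s$ of $\HC(W,S,\ph)$, and so equals $\id$ on the whole algebra, which forces $\Phi = \th^{-1}$ and makes $\th$ a ring isomorphism. Part (a) is then immediate, since $\HCt$ is the image under $\th$ of the subalgebra $\HC(\Wti,\Jti,\pht) \subset W_I \ltimes \HC(\Wti,\Jti,\pht)$.

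For part (c), $T_x^\ph$ is bar-invariant by Step~1 and trivially congruent to itself modulo $\HC_{<0}(W,S,\ph)$, so the Kazhdan-Lusztig uniqueness gives $C_x^\ph = T_x^\ph$ for $x \in W_I$. For $w \in \Wti$, the restriction of $\th$ to $\HC(\Wti,\Jti,\pht)$ is a ring isomorphism onto $\HCt$ that commutes with the bar involution (defined on both sides by $T_v \mapsto T_{v^{-1}}^{-1}$) and maps $\HC_{<0}(\Wti,\Jti,\pht)$ into $\HC_{<0}(W,S,\ph)$, so $\th(C_w^\pht)$ is bar-invariant and congruent to $T_w^\ph$ modulo $\HC_{<0}$, hence equals $C_w^\ph$. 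Finally, for $x \in W_I$ and $w \in W$ the element $T_x^\ph C_w^\ph$ is bar-invariant (bar is a ring automorphism and both factors are bar-invariant) and reduces modulo $\HC_{<0}(W,S,\ph)$ to $T_x T_w = T_{xw}$, so it equals $C_{xw}^\ph$ by uniqueness; symmetrically $C_w^\ph T_x^\ph = C_{wx}^\ph$. The main obstacle in this plan is the mixed braid verification: it requires both the combinatorial parity observation on $m_{ss'}$ and the explicit computation in the semidirect product reducing a braid of length $2k$ in $\HC(W,S,\ph)$ to a braid of length $k$ in $\HC(\Wti,\Jti,\pht)$.
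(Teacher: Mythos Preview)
Your treatment of part~(c) matches the paper's, and your verification that $\Phi$ respects the defining relations of $\HC(W,S,\ph)$---in particular the mixed braid relation via the parity argument and the reduction to a braid of length $k$ between $T_{s'}^{\pht}$ and $T_{ss's}^{\pht}$---is correct. However, there is a genuine gap in the step ``$\th\circ\Phi$ agrees with the identity on the generators, and so equals $\id$ on the whole algebra''. That inference requires $\th\circ\Phi$ to be a ring homomorphism; you have shown $\Phi$ is one, but at this stage $\th$ is only known to be a $\ZM[\G]$-linear bijection, so the composite is not known to be multiplicative and the conclusion is circular. Equivalently, even after establishing that $\Phi$ is a bijective ring homomorphism you must still check that its inverse coincides with the specific linear map $\th$, i.e.\ that $\Phi^{-1}(T_w^{\pht})=T_w^{\ph}$ for every $w\in\Wti$. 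Unwinding along a $\Jti$-reduced expression $w=\tti_1\cdots\tti_r$, this amounts to proving $T_{\tti_1}^{\ph}\cdots T_{\tti_r}^{\ph}=T_w^{\ph}$ inside $\HC(W,S,\ph)$, which your argument does not address.

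This missing step is exactly what the paper's approach supplies directly: it proves by a short induction the lemma that $\ell_J(xy)=\ell_J(x)+\ell_J(y)$ implies $T_x^{\ph}T_y^{\ph}=T_{xy}^{\ph}$ (only marginally stronger than your Step~1), and then invokes the equality $\ellt=\ell_J|_{\Wti}$ to obtain $T_w^{\ph}T_{w'}^{\ph}=T_{ww'}^{\ph}$ whenever $\ellt(ww')=\ellt(w)+\ellt(w')$. With this lemma in hand, $\th$ is seen to be multiplicative immediately, and the quadratic relation for $T_{\tti}^{\ph}$ follows from conjugating that of $T_t^{\ph}$. So once your gap is filled you end up needing the paper's key lemma anyway, on top of the braid-relation check; the paper's route is therefore strictly shorter.
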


\begin{proof}
Since $\ph(I)=\{0\}$, we have, for all $s \in I$, 
$$(T_s^\ph)^2 = 1\quad\text{and}\quad \overline{T_s^\ph} = T_s^\ph.$$
So it follows by an easy induction argument that:
\equat\label{lj}
\text{\it If $\ell_J(xy)=\ell_J(x)+\ell_J(y)$, then 
$T_x^\ph T_y^\ph = T_{xy}^\ph$.}
\endequat
In particular, if $x \in W_I$ and $w \in W$, then 
$T_x^\ph T_w^\ph=T_{xw}^\ph$ et $T_w^\ph T_x^\ph=T_{wx}^\ph$. 
We deduce that $C_x^\ph=T_x^\ph$, $C_{xw}^\ph=T_x^\ph C_w^\ph$, 
$C_{wx}^\ph=C_w^\ph T_x^\ph$. This proves the first assertion of (c). 

\medskip

Consequently, if $x \in W_I$ and $t \in J$, then 
$$T_{xtx^{-1}}^\ph=T_x^\ph T_t^\ph (T_x^\ph)^{-1}$$
and so, if we set $\tti=xtx^{-1}$, we have 
$$(T_\tti^\ph - e^{\pht(\tti)})(T_{\tti}^\ph+e^{-\pht(\tti)})=0.$$
To show (a) and (b), it only remains to show the following: 
if $w$ and $w'$ are two elements of $\Wti$ such that 
$\ellt(ww')=\ellt(w)+\ellt(w')$ (here, $\ellt$ denotes the length 
function on $\Wti$ associated to $\Jti$), then 
$$T_w^\ph T_{w'}^\ph =T_{ww'}^\ph.$$
But this follows from \ref{ltilde} and \ref{lj}.

\medskip

In order to show the last assertion of (c), it is sufficient to notice that, 
if $h \in \HC(\Wti,\Jti,\pht)$, then $\th(\overline{h})=\overline{\th(h)}$, 
and to use the characterization of the elements of the Kazhdan-Lusztig basis.
\end{proof}

\begin{coro}\label{ordre nul}
Let $a$, $b \in W_I$, $x$, $y \in \Wti$. 
Then:
\begin{itemize}
\itemth{a} $ax \pre{L}{\ph} by$ if and only if $x \pre{L}{\pht} y$. 

\itemth{b} $xa \pre{R}{\ph} yb$ if and only if $x \pre{LR}{\pht} y$.

\itemth{c} $ax \pre{LR}{\ph} yb$ if and only if there exists $c \in W_I$ such 
that $x \pre{LR}{\pht} cyc^{-1}$.
\end{itemize}
\end{coro}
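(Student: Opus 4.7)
The plan is to derive all three statements from two more basic ``elementary'' facts about the relations $\gauche_\ph$ and $\droite_\ph$, and then to take transitive closures.

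Using Proposition~\ref{semi-direct} and the decomposition $W = W_I \ltimes \Wti$, I would write every $z \in W$ in standard form $z = xw$ with $x \in W_I$, $w \in \Wti$; then $C_z^\ph = T_x^\ph C_w^\ph$, and $\HC(W,S,\ph) = \bigoplus_{c \in W_I} T_c^\ph \HCt = \bigoplus_{c \in W_I} \HCt T_c^\ph$. For $z = xw$, $z' = x'w'$ in standard form, writing an arbitrary $h \in \HC(W,S,\ph)$ as $\sum_c T_c^\ph \tilde h_c$ (resp.\ $\sum_c \tilde h_c T_c^\ph$), using that $\HCt$ is a subalgebra in which $C_{w'}^\ph$ lies and is identified with $\HC(\Wti,\Jti,\pht)$ by $\th$, and computing the coefficient of $C_z^\ph$ in $hC_{z'}^\ph$ (resp.\ $C_{z'}^\ph h$), I would establish:
\begin{quotation}
\noindent (L) $z \gauche_\ph z'$ iff $w \gauche_\pht w'$, independently of $x, x' \in W_I$; \\
\noindent (R) $z \droite_\ph z'$ iff $c^{-1} w c \droite_\pht w'$, where $c = x^{-1}x' \in W_I$.
\end{quotation}
A key auxiliary point is that conjugation by any $c \in W_I$ is an algebra automorphism of $\HC(\Wti,\Jti,\pht)$ commuting with the bar involution (as it permutes $\Jti$ and preserves $\pht$), hence preserves the Kazhdan--Lusztig basis; so the preorders $\pre{L}{\pht}$, $\pre{R}{\pht}$, $\pre{LR}{\pht}$ are $W_I$-equivariant.

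Once (L) and (R) are in place, assertion (a) follows from (L) by transitive closure, as (L) does not depend on the $W_I$-parts. For (b), writing the alternative-form elements $xa$ and $yb$ in standard form as $a(a^{-1}xa)$ and $b(b^{-1}yb)$ and feeding them into (R), the accumulated conjugations collapse: $xa \droite_\ph yb \iff b^{-1}xb \droite_\pht b^{-1}yb \iff x \droite_\pht y$ (by $W_I$-equivariance), and transitive closure finishes the argument (so the right-hand side of (b) should read $\pre{R}{\pht}$ rather than $\pre{LR}{\pht}$).

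The main obstacle is (c), where left- and right-steps in a two-sided chain interact. For the direct implication, given a chain $ax = z_0 \bilatere_\ph \cdots \bilatere_\ph z_n = yb$ with standard forms $z_i = x_i w_i$ (so $x_0 = a$, $w_0 = x$, $x_n = b$, $w_n = b^{-1}yb$), I would define inductively $f_i \in W_I$ by $f_0 = 1$, setting $f_i = f_{i-1}$ whenever step $i$ is a $\gauche_\ph$-step and $f_i = f_{i-1}(x_{i-1}^{-1}x_i)$ whenever it is a $\droite_\ph$-step. A direct case check using (L), (R) and $W_I$-equivariance shows that the conjugates $v_i := f_i w_i f_i^{-1} \in \Wti$ satisfy $v_{i-1} \bilatere_\pht v_i$ at every step, hence $x = v_0 \pre{LR}{\pht} v_n = (f_n b^{-1}) y (f_n b^{-1})^{-1}$, yielding $c := f_n b^{-1}$ with $x \pre{LR}{\pht} cyc^{-1}$.

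For the converse, given $x \pre{LR}{\pht} cyc^{-1}$, I would lift a $\bilatere_\pht$-chain in $\Wti$ to a $\bilatere_\ph$-chain in $W$ by prepending the constant $W_I$-factor $a$: taking $x_{i-1} = x_i = a$ in (R) gives $c = 1$ and no conjugation, so this shows $ax \pre{LR}{\ph} a(cyc^{-1})$. Finally, (L) and (R) applied to the tautologies $w \gauche_\pht w$ and $w \droite_\pht w$ give $z \sim_L^\ph dz$ and $z \sim_R^\ph zd$ for all $z \in W$, $d \in W_I$, so left- and right-multiplication by $W_I$ preserves two-sided cells; the explicit identity $(c^{-1}a^{-1}) \cdot a(cyc^{-1}) \cdot (cb) = yb$ then yields $a(cyc^{-1}) \sim_{LR}^\ph yb$, completing the argument.
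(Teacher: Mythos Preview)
Your approach is correct and essentially the same as the paper's, only more explicit. The paper proves exactly your statement (L) at the level of elementary steps (this is the content of its proof of (a)), and then simply asserts that ``(b) and (c) follow easily from (a)''. You go further: you also isolate the right-step version (R), you supply the $W_I$-equivariance argument for the preorders on $\Wti$, and you carry out the inductive bookkeeping with the elements $f_i$ that makes (c) go through. This is precisely the work hidden behind the paper's ``follow easily'', so your argument is a faithful fleshing-out rather than a different route.

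Your observation about (b) is also correct: using $u \pre{R}{\ph} v \iff u^{-1} \pre{L}{\ph} v^{-1}$ together with (a) gives $xa \pre{R}{\ph} yb \iff x^{-1} \pre{L}{\pht} y^{-1} \iff x \pre{R}{\pht} y$, so the right-hand side of (b) should indeed read $\pre{R}{\pht}$; the $\pre{LR}{\pht}$ in the statement is a typo (and Corollary~\ref{KL semi-direct} on right cells is consistent with this corrected reading).
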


\begin{proof}
(b) and (c) follow easily from (a). So let us prove (a). 
It is sufficient to show that $ax \stackrel{L,\ph}{\longleftarrow} by$ 
if and only if $x \stackrel{L,\pht}{\longleftarrow} y$. 

First assume that $ax \stackrel{L,\ph}{\longleftarrow} by$. 
Then there exists $w \in W$ such that $C_{ax}^\ph$ occurs 
in the decomposition of $T_w^\ph C_{by}^\ph$. Let us write 
$w=cz$, with $c \in W_I$ and $z \in \Wti$. Then 
$$T_w^\ph C_{by}^\ph = T_{cb}^\ph T_{b^{-1}zb}^\ph C_y^\ph.$$
This shows that $cb=a$ and that $C_x^\ph$ occurs in 
the decomposition of $T_{b^{-1}zb}^\ph C_y^\ph$. 
By applying the Proposition \ref{semi-direct} (and the isomorphism $\th$), 
we get that $x \stackrel{L,\pht}{\longleftarrow} y$. 

Conversely, assume that $x \stackrel{L,\pht}{\longleftarrow} y$. 
Then there exists $w \in \Wti$ such that $C_x^\pht$ occurs 
in the decomposition of $T_w^\pht C_y^\pht$. 
By applying the Proposition \ref{semi-direct} (and the isomorphism $\th$), 
we get that $C_{ax}^\ph$ occurs in the decomposition of 
$T_{awb^{-1}}^\ph C_{by}^\ph$. So $ax \stackrel{L,\ph}{\longleftarrow} by$. 
\end{proof}

We can immediately deduce the following corollary:

\begin{coro}\label{KL semi-direct}
Assume that $\ph(I) =\{0\}$. 
Then the left (respectively right, respectively two-sided) cells 
for $(W,S,\ph)$ are of the form $W_I \cdot C$ (respectively $C \cdot W_I$, 
respectively $W_I \cdot C \cdot W_I$), 
where $C$ is a left (respectively right, respectively two-sided) cell for  $(\Wti,\Jti,\pht)$.
\end{coro}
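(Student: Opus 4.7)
The plan is to use the semidirect product decomposition $W = W_I \ltimes \Wti$ from \ref{dyer bis} to uniquely write each $w \in W$ as $w = ax$ with $a \in W_I$ and $x \in \Wti$ (or dually as $w = ya$ with $y \in \Wti$), and then translate every cell-equivalence on $W$ into a condition on $\Wti$ by appeal to Corollary \ref{ordre nul}.

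For the left case, I would apply Corollary \ref{ordre nul}(a) directly: taking symmetric and antisymmetric parts, $ax \sim_L^\ph by$ is equivalent to $x \sim_L^\pht y$, so
$$\Cell_L^\ph(ax) = \bigl\{ by : b \in W_I,\ y \in \Wti,\ y \sim_L^\pht x \bigr\} = W_I \cdot \Cell_L^\pht(x).$$
Conversely, if $C$ is any left cell of $(\Wti,\Jti,\pht)$, picking $x \in C$ gives $W_I \cdot C = \Cell_L^\ph(x)$. The right case follows symmetrically either from part (b) of Corollary \ref{ordre nul}, or by applying the left case to $w^{-1}$ via \ref{siml simr}, yielding $\Cell_R^\ph(xa) = \Cell_R^\pht(x) \cdot W_I$.

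The two-sided case is the delicate point, because Corollary \ref{ordre nul}(c) involves $W_I$-conjugation: $ax \sim_{LR}^\ph yb$ if and only if there exist $c_1, c_2 \in W_I$ with $x \pre{LR}{\pht} c_1 y c_1^{-1}$ and $y \pre{LR}{\pht} c_2 x c_2^{-1}$. The crucial observation I will use is that the action of $W_I$ on $\Wti$ by conjugation preserves the generating set $\Jti$ and the weight $\pht$ (by \ref{pht}), so it induces an automorphism of $(\Wti, \Jti, \pht)$ and hence permutes the two-sided cells of $\Wti$. Consequently, the condition above becomes: the two-sided cells $D := \Cell_{LR}^\pht(x)$ and $\Cell_{LR}^\pht(y)$ lie in the same $W_I$-orbit $\hat D = \bigcup_{c \in W_I} cDc^{-1}$. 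Thus $\Cell_{LR}^\ph(ax) = W_I \cdot \hat D \cdot W_I$; but since $cDc^{-1} \subseteq W_I \cdot D \cdot W_I$ for every $c \in W_I$, the conjugates collapse into the outer $W_I$ factors and we obtain $\Cell_{LR}^\ph(ax) = W_I \cdot D \cdot W_I$, as required.

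The main obstacle is precisely this last collapse: one must verify that although Corollary \ref{ordre nul}(c) produces a union of $W_I$-conjugate two-sided cells of $\Wti$, writing each conjugate as $cDc^{-1}$ and absorbing $c, c^{-1}$ into adjacent $W_I$ factors shows that a single representative $D$ suffices to describe the two-sided cell of $W$.
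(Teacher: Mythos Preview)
Your approach matches the paper's, which simply records the corollary as an immediate consequence of Corollary \ref{ordre nul} without further argument; your left and right cases are exactly the intended deduction, and the collapse $W_I\cdot\hat D\cdot W_I = W_I\cdot D\cdot W_I$ via $cDc^{-1}\subseteq W_I D W_I$ is correct.

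One step in your two-sided argument is asserted rather than proved: from ``$\exists\,c_1:\ x \pre{LR}{\pht} c_1 y c_1^{-1}$ and $\exists\,c_2:\ y \pre{LR}{\pht} c_2 x c_2^{-1}$'' you conclude that $\Cell_{LR}^\pht(x)$ and $\Cell_{LR}^\pht(y)$ lie in the same $W_I$-orbit. The easy direction (same orbit $\Rightarrow$ equivalent in $W$) is fine, but the converse needs a short extra argument. Writing $D=\Cell_{LR}^\pht(x)$, $D'=\Cell_{LR}^\pht(y)$ and $g=c_1c_2$, your two conditions give a chain $D \leq c_1 D' c_1^{-1} \leq gDg^{-1} \leq g^2 D g^{-2}\leq\cdots$ in the poset of two-sided cells of $\Wti$; to force $D=c_1D'c_1^{-1}$ you need this chain to close up, which follows as soon as $g$ has finite order (e.g.\ $W_I$ finite) or the $W_I$-orbit of $D$ is finite. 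The paper does not spell this out either, and in all the applications made later (for instance $W=\SG_3\ltimes W(D_4)$ in type $F_4$) $W_I$ is finite, so the point is harmless there; but you should be aware that this is where the argument leans on finiteness.
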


\bigskip

At the level of cell representations, we get:

\begin{coro}\label{induit zero}
If $C$ if a left cell for $(\Wti,\Jti,\pht)$, then 
$$\ZM M_{W_I\cdot C}^{L,\ph} \simeq \Ind_{\Wti}^W \ZM M_C^{L,\pht}.$$
\end{coro}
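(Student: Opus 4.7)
The idea is to first establish an analogous identification at the level of Hecke modules, and then specialise via the augmentation~$\aug$. Throughout, set $\tilde{\HC}=\HC(\Wti,\Jti,\pht)$ and write $\HC=\HC(W,S,\ph)$, so that by Proposition~\ref{semi-direct} the map $\th$ identifies $\HC$ with $W_I\ltimes\tilde{\HC}$ and, for $a\in W_I$ and $x\in\Wti$, carries $a\cdot C_x^\pht$ to $C_{ax}^\ph$. Because $\tilde{\HC}$ sits inside $W_I\ltimes\tilde{\HC}$ via $h\mapsto 1\cdot h$ as a subalgebra, one has a well-defined induction functor $(W_I\ltimes\tilde{\HC})\otimes_{\tilde{\HC}}(-)$ from left $\tilde{\HC}$-modules to left $(W_I\ltimes\tilde{\HC})$-modules.

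The first step is to show, under the identification $\th$, that
\[
\HC_{\pre{L}{\ph} W_I\cdot C}\;\simeq\;(W_I\ltimes\tilde{\HC})\otimes_{\tilde{\HC}}\tilde{\HC}_{\pre{L}{\pht} C}
\]
as left $\HC$-modules, and similarly with $<$ in place of $\le$. By Corollary~\ref{ordre nul}(a), an element $w=a'x'$ (with $a'\in W_I$, $x'\in\Wti$) satisfies $w\pre{L}{\ph} W_I\cdot C$ if and only if $x'\pre{L}{\pht} C$, and the same holds with strict inequalities. Combined with the identity $C_{a'x'}^\ph=\th(a'\cdot C_{x'}^\pht)$ from Proposition~\ref{semi-direct}(c), this shows that $\th^{-1}(\HC_{\pre{L}{\ph} W_I\cdot C})$ has $\ZM[\G]$-basis $\{a'\cdot C_{x'}^\pht\,:\,a'\in W_I,\ x'\pre{L}{\pht} C\}$, and is exactly the left ideal $(W_I\ltimes\tilde{\HC})\cdot\tilde{\HC}_{\pre{L}{\pht} C}$; the natural surjection from the induced module is seen to be an isomorphism by comparing $\mathbb{Z}[\Gamma]$-bases (the induced module is free over $\mathbb{Z}[\Gamma]$ with basis $\{a'\otimes C_{x'}^\pht\}$, since $W_I\ltimes\tilde{\HC}$ is free as a right $\tilde{\HC}$-module with basis~$W_I$). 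Taking the quotient by the corresponding strict version gives
\[
M_{W_I\cdot C}^{L,\ph}\;\simeq\;(W_I\ltimes\tilde{\HC})\otimes_{\tilde{\HC}} M_C^{L,\pht}
\]
as left $\HC$-modules.

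The second step is purely formal: apply the augmentation $\ZM\otimes_{\ZM[\G]}(-)$. Since induction along a subalgebra commutes with base change, and since $\ZM\otimes_{\ZM[\G]}(W_I\ltimes\tilde{\HC})=W_I\ltimes\ZM\Wti=\ZM W$ while $\ZM\otimes_{\ZM[\G]}\tilde{\HC}=\ZM\Wti$, we obtain
\[
\ZM M_{W_I\cdot C}^{L,\ph}\;\simeq\;\ZM W\otimes_{\ZM\Wti}\ZM M_C^{L,\pht}\;=\;\Ind_{\Wti}^{W}\ZM M_C^{L,\pht},
\]
which is the desired statement. The only non-routine point is the first step: one must carefully check that the identification of $\HC_{\pre{L}{\ph} W_I\cdot C}$ with an induced ideal is $\HC$-equivariant, which reduces to verifying that left multiplication by $W_I$ permutes the summands indexed by $W_I$ while left multiplication by $\tilde{\HC}$ is absorbed into each summand via the semidirect-product conjugation (and preserves the ideal $\tilde{\HC}_{\pre{L}{\pht} C}$ because it is already a left ideal of $\tilde{\HC}$). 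Everything else is bookkeeping with the bases produced by Proposition~\ref{semi-direct}(c) and Corollary~\ref{ordre nul}(a).
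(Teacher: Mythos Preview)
Your proof is correct and follows essentially the same approach as the paper, which merely records that the result ``follows easily from the Proposition~\ref{semi-direct} (and its proof) and the Corollaries~\ref{ordre nul} and~\ref{KL semi-direct}''. You have spelled out precisely the details the paper leaves implicit: using the isomorphism $\th$ together with Proposition~\ref{semi-direct}(c) and Corollary~\ref{ordre nul}(a) to identify $\HC_{\pre{L}{\ph} W_I\cdot C}$ with the induced ideal, and then specialising via the augmentation.

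One small comment on exposition: the $\HC$-equivariance of the map $(W_I\ltimes\tilde{\HC})\otimes_{\tilde{\HC}}\tilde{\HC}_{\pre{L}{\pht} C}\to\HC$ you describe is automatic, since it is just the multiplication map $A\otimes_B I\to A$ for a left ideal $I$ of a subalgebra $B\subseteq A$; the genuine content, as you correctly identify, is the bijectivity, which your basis comparison handles.
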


\begin{proof}
This follows easily from the Proposition \ref{semi-direct} (and its proof) 
and the Corollaries \ref{ordre nul} and  \ref{KL semi-direct}.
\end{proof}

\bigskip

\section{Conjectures about cells}

\bigskip

\begin{quotation}
{\it From now on, and until the end of this paper, we assume that 
$S$ is finite. We shall use the notion, notation and results of the Appendix: 
positive subsets of $\ZM[\Sba]$, topology on $\positif(\ZM[\Sba])$, 
hyperplane arrangements, $\Pos(\ph)$, $\UC(\l)$, $\HC_\l$...}
\end{quotation}

\bigskip

As explained before, the main aim of this paper is to study 
the behaviour of the relations $\sim_?^\ph$ as the datum 
$(\G,\le,\ph)$ varies. The first step is to show that 
the relations $\pre{?}{\ph}$ (and $\sim_?^\ph$) depend only on the positive 
subset $\Pos(\ph) \in \positif(\ZM[\Sba])$ (see Corollary \ref{phi X}). 
This allows us to define relations $\pre{?}{X}$ and 
$\sim_?^X$ for $X \in \positif(\ZM[\Sba])$. 
Our conjectures are then about the behaviour of the relations 
$\sim_?^X$ whenever $X$ runs over $\positif(\ZM[\Sba])$: 
this involves the topology of this set together with the 
notion of hyperplane arrangements (see the Appendix). 

\bigskip

\subsection{Positive subsets of ${\boldsymbol{\ZM[\Sba]}}$} 
Let $X$ be a positive subset of $\ZM[\Sba]$. 
Let $\G_X = \ZM[\Sba]/(X \cap (-X))$ and let $\leqslant_X$ be the 
total order on $\G_X$ defined in \ref{ordre X}. Let $\ph_X : \Sba \to \G_X$ 
be the canonical map (it is the composition of the natural map 
$\Sba \to \ZM[\Sba]$ with the canonical map $\can_X : \ZM[\Sba] \to \G_X$
of the Appendix). 
For simplification, the relations $\le_?^{\ph_X}$ et $\sim_?^{\ph_X}$ will 
be denoted by $\le_?^X$ and $\sim_?^X$. Similarly, if $w \in W$, 
we will denote by $\Cell_?^X(w)$ the subset $\Cell_?^{\ph_X}(w)$ and, 
if $C$ is a $?$-cell for $(W,S,\ph_X)$, we denote by 
$\ZM M_C^{?,X}$ the $\ZM W$-module $\ZM M_C^{?,\ph_X}$ 
(and, if $W$ is {\it finite}, we denote by $\chi_C^{?,X}$ the character 
$\chi_C^{?,\ph_X}$).
The next proposition and its corollary show that the family 
$\bigl((\G_X,\le_X,\ph_X)\bigr)_{X \in \positif(\ZM[\Sba])}$ 
is essentially exhaustive.

\bigskip

\begin{prop}
Let $X=\Pos(\ph)$. Then there exists a unique morphism of groups 
$\phba : \G_X \to \G$ such that $\ph = \phba \circ \ph_X$. 
This morphism $\phba$ is strictly increasing. 
\end{prop}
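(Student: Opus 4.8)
The plan is to recognise that, for $X=\Pos(\ph)$, the group $\G_X$ is nothing but $\ZM[\Sba]/\Ker\ph$, so that both the existence and the uniqueness of $\phba$ are instances of the universal property of quotient groups, and then to read off strict monotonicity directly from the definition of $\leqslant_X$.

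First I would view $\ph$ as a morphism of groups $\ph : \ZM[\Sba]\to\G$ via Lemma \ref{factorisation poids}, and compute $X\cap(-X)$. By definition $X=\Pos(\ph)=\{\l\in\ZM[\Sba]\mid \ph(\l)\geqslant 0\}$, hence $-X=\{\l\mid \ph(\l)\leqslant 0\}$ and therefore $X\cap(-X)=\Ker\ph$. Thus $\G_X=\ZM[\Sba]/\Ker\ph$ and $\can_X:\ZM[\Sba]\to\G_X$ is the canonical projection. The universal property of the quotient then produces a unique group morphism $\phba:\G_X\to\G$ with $\ph=\phba\circ\can_X$; composing with the canonical map $i:\Sba\to\ZM[\Sba]$ and using $\ph_X=\can_X\circ i$ gives $\ph=\phba\circ\ph_X$. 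Uniqueness of $\phba$ is immediate since $\can_X$ is surjective (equivalently, $\ph_X(\Sba)$ generates $\G_X$). Under the identifications of Lemma \ref{factorisation poids}, this is exactly the asserted factorisation of weight functions on $W$.

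Next I would check that $\phba$ is strictly increasing, using the description of $\leqslant_X$ recalled in the Appendix (\ref{ordre X}), namely that the set of non-negative elements of $\G_X$ is $\can_X(X)$. If $\g>_X 0$, choose $\l\in\ZM[\Sba]$ with $\can_X(\l)=\g$; then $\l\in X$ but $\l\notin X\cap(-X)=\Ker\ph$, so $\ph(\l)\geqslant 0$ and $\ph(\l)\neq 0$, i.e. $\ph(\l)>0$. Since $\phba(\g)=\phba(\can_X(\l))=\ph(\l)$, this shows $\phba\bigl((\G_X)_{>0}\bigr)\subseteq\G_{>0}$; being a group morphism, $\phba$ is therefore strictly increasing (in particular injective), because $\g<_X\g'$ forces $\phba(\g')-\phba(\g)=\phba(\g'-\g)>0$.

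I do not expect a real obstacle: the argument is formal. The only point requiring a little care is to invoke the Appendix's definition of $\leqslant_X$ correctly, so that $\can_X(X)$ really is the non-negative cone of $\G_X$; with that in hand, strict monotonicity is immediate, and the factorisation is just the universal property of a quotient group.
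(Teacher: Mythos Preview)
Your proposal is correct and follows essentially the same approach as the paper: both identify $X\cap(-X)=\Ker\ph$, invoke the universal property of the quotient for existence and uniqueness, and deduce monotonicity from the fact that representatives of non-negative elements of $\G_X$ lie in $X$. The only cosmetic difference is that the paper first shows $\phba$ is increasing and then upgrades to strictly increasing via injectivity, whereas you argue directly that $(\G_X)_{>0}$ maps into $\G_{>0}$.
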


\bigskip

\begin{proof} 
Indeed, $\Ker \ph = X \cap (-X)$ so the existence, the unicity and the injectivity of $\phba$ are clear. On the 
other hand, if $\g$, $\g' \in \G_X$ are such that $\g \le \g'$ and if 
$\l \in \ZM[\Sba]$ is a representative of $\g'-\g$, then $\l \in X$ 
by \ref{relation independante}. 
So $\ph(\l) \ge 0$. In other words $\phba(\g'-\g) \ge 0$  
that is, $\phba(\g) \le \phba(\g')$. So $\phba$ is increasing: 
the fact that it is strictly increasing follows from the injectivity. 
\end{proof}

\bigskip

\begin{coro}\label{phi X}
Let $X=\Pos(\ph)$ and let $? \in \{L,R,LR\}$. Then:
\begin{itemize}
\itemth{a} The relations 
$\le_?^\ph$ and $\le_?^X$ are equal (as well as the relations 
$\sim_?^\ph$ and $\sim_?^X$). 

\itemth{b} If $w \in W$, then 
$\Cell_?^\ph(w)=\Cell_?^X(w)$.

\itemth{c} If $C=\Cell_L^\ph(w)$, then $\ZM M_C^\ph \simeq \ZM M_C^X$. 
\end{itemize}
\end{coro}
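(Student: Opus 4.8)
The plan is to deduce all three assertions from Proposition \ref{strictement croissant} applied to the strictly increasing morphism $\phba : \G_X \to \G$ produced by the preceding Proposition, taking $\G_X$, $\ph_X$ and $\phba$ in the roles of $\G$, $\ph$ and $\r$; the key identity is that $\phba \circ \ph_X = \ph$ by construction of $\phba$.

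For (a), Proposition \ref{strictement croissant}(c) says that the preorders $\le_?^{\ph_X}$ and $\le_?^{\phba\circ\ph_X}=\le_?^{\ph}$ coincide, as do the equivalence relations $\sim_?^{\ph_X}$ and $\sim_?^{\ph}$. Since $\le_?^X$ and $\sim_?^X$ are, by definition, $\le_?^{\ph_X}$ and $\sim_?^{\ph_X}$, this is exactly (a). Likewise, Proposition \ref{strictement croissant}(d) gives $\Cell_?^{\ph_X}(w)=\Cell_?^{\phba\circ\ph_X}(w)=\Cell_?^{\ph}(w)$, and $\Cell_?^X(w)=\Cell_?^{\ph_X}(w)$ by definition, which is (b).

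For (c), I would argue that the whole cell-module construction base-changes along the ring map $\ZM[\G_X]\to\ZM[\G]$ induced by $\phba$. Indeed $\phba_* : \HC(W,S,\ph_X)\to\HC(W,S,\ph)$ is an injective morphism of $\ZM[\G_X]$-algebras, and since both algebras are obtained from the generic Hecke algebra $\HCB(W,S)$ by extension of scalars (along $R\to\ZM[\G_X]$ and $R\to\ZM[\G]$ respectively), $\phba_*$ extends to an isomorphism $\ZM[\G]\otimes_{\ZM[\G_X]}\HC(W,S,\ph_X)\simeq\HC(W,S,\ph)$. By Proposition \ref{strictement croissant}(b) this identification sends $C_w^{\ph_X}$ to $C_w^{\ph}$, so, using that the preorders coincide by part (a), it sends the free $\ZM[\G_X]$-direct-summands $\HC(W,S,\ph_X)_{\pre{?}{\ph_X}C}$ and $\HC(W,S,\ph_X)_{<_?^{\ph_X}C}$ (after tensoring up) precisely onto $\HC(W,S,\ph)_{\pre{?}{\ph}C}$ and $\HC(W,S,\ph)_{<_?^{\ph}C}$. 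Passing to the quotient gives an isomorphism of $\HC(W,S,\ph)$-modules $\ZM[\G]\otimes_{\ZM[\G_X]}M_C^{L,\ph_X}\simeq M_C^{L,\ph}$. Specializing along $\aug:\ZM[\G]\to\ZM$, whose restriction to $\ZM[\G_X]$ is the augmentation of $\ZM[\G_X]$, yields $\ZM\otimes_{\ZM[\G]}M_C^{L,\ph}\simeq\ZM\otimes_{\ZM[\G_X]}M_C^{L,\ph_X}$, i.e. $\ZM M_C^{\ph}\simeq\ZM M_C^X$; and the compatible identifications $\ZM\otimes_{\ZM[\G]}\HC(W,S,\ph)\simeq\ZM W\simeq\ZM\otimes_{\ZM[\G_X]}\HC(W,S,\ph_X)$ ensure the $\ZM W$-actions on the two sides agree.

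I expect no serious obstacle here: parts (a) and (b) are immediate once Proposition \ref{strictement croissant} is available, and the only point requiring care is the bookkeeping in (c) — checking that the cell submodules really do base-change correctly (this is where part (a) is used, to guarantee that ``$\pre{?}{\ph_X}C$'' and ``$\pre{?}{\ph}C$'' select the same set of basis vectors) and that the induced $\ZM W$-module structure is the intended one. Since the preceding Proposition already supplies the strictly increasing morphism $\phba$, the substance of the corollary is entirely contained in that setup.
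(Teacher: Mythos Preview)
Your proposal is correct and follows exactly the approach the paper intends: the corollary is stated without proof precisely because it is meant to follow immediately from the preceding Proposition (producing the strictly increasing $\phba$ with $\phba\circ\ph_X=\ph$) combined with Proposition~\ref{strictement croissant}. Your additional bookkeeping for part (c)---tracking the Kazhdan--Lusztig bases through the base change $\ZM[\G_X]\to\ZM[\G]$ and then specializing via the augmentation---is the natural way to make the implicit argument explicit, and is entirely in the spirit of the paper.
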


\bigskip

Thanks to Corollary \ref{phi X}, studying the behaviour of 
the relations $\sim_?^\ph$ as the weight function $\ph$ varies 
is equivalent to studying the behaviour of relations $\sim_?^X$ 
as $X$ runs over $\positif(\ZM[\Sba])$. 
The following remark can be useful for switching from 
one point of view to the other:

\bigskip

\remark{phi X traduction}
Let $\l \in \ZM[\Sba]$ and $X \in \positif(\ZM[\Sba])$. Then the 
following hold:
\begin{itemize}
\itemth{a} $X \in \UC(\l)$ if and only if $\ph_X(\l) <_X 0$.
\itemth{b} $X \in \HC_\l$ if and only if $\ph_X(\l) = 0$.
\itemth{c} $X \in \overline{\UC(\l)}$ if and only if 
$\ph_X(\l) \pre{X}{} 0$.\finl
\end{itemize}

\bigskip

In the same spirit, the Corollary \ref{signe cellules} can be 
translated into the Proposition \ref{symetrie} below. We first 
need some notation. 
If $\o \in \Sba$, we denote by $\t_\o$ the $\ZM$-linear symmetry 
on $\ZM[\Sba]$ such that $\t_\o(\o)=-\o$ and $\t_\o(\o')=\o'$ if 
$\o' \neq \o$. It is an automorphism of $\ZM[\Sba]$: it induces 
an homeomorphism $\t_\o^*$ of $\positif(\ZM[\Sba])$.

%On d\'eduit du corollaire \ref{signe cellules} le 
%r\'esultat suivant~:

\bigskip

\begin{prop}\label{symetrie}
If $\o \in \Sba$, if $X \in \positif(\ZM[\Sba])$ and if 
$? \in \{L,R,LR\}$, then the relations $\sim_?^X$ and 
$\sim_?^{\t_\o^*(X)}$ coincide.
\end{prop}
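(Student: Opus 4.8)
The plan is to reduce Proposition~\ref{symetrie} to Corollary~\ref{signe cellules} (changing signs) together with Proposition~\ref{strictement croissant}(c) (strictly increasing morphisms). The key observation will be that the sign-change automorphism $\t_\o$ of $\ZM[\Sba]$ descends, for each $X$, to a \emph{strictly increasing} isomorphism between the ordered group attached to $X$ and the one attached to $\t_\o^*(X)$, and that through this isomorphism the canonical map $\ph_{\t_\o^*(X)}$ becomes exactly the sign-change of $\ph_X$ along the conjugacy class $\o$.

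First I would make $\t_\o^*(X)$ explicit. Since $\t_\o$ is a group automorphism of $\ZM[\Sba]$ and an involution, $\t_\o^*(X)=\t_\o(X)$ whichever of the two standard functorialities on positive subsets recalled in the Appendix is used; write $Y=\t_\o(X)$. As $\t_\o$ is additive and bijective, $Y\cap(-Y)=\t_\o\bigl(X\cap(-X)\bigr)$, so $\t_\o$ induces an isomorphism of groups $\overline{\t}_\o:\G_X=\ZM[\Sba]/(X\cap(-X))\ \stackrel{\sim}{\longrightarrow}\ \G_Y=\ZM[\Sba]/(Y\cap(-Y))$, characterised by $\overline{\t}_\o(\can_X(\mu))=\can_Y(\t_\o(\mu))$ for $\mu\in\ZM[\Sba]$. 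Then I would check that $\overline{\t}_\o$ is strictly increasing for the orders $\le_X$ and $\le_Y$ of~\ref{ordre X}: if $\gamma,\gamma'\in\G_X$ and $\mu\in\ZM[\Sba]$ is a representative of $\gamma'-\gamma$, then $\t_\o(\mu)$ represents $\overline{\t}_\o(\gamma')-\overline{\t}_\o(\gamma)$, and by~\ref{relation independante} one has $\overline{\t}_\o(\gamma)\le_Y\overline{\t}_\o(\gamma')$ iff $\t_\o(\mu)\in Y=\t_\o(X)$ iff $\mu\in X$ iff $\gamma\le_X\gamma'$; being bijective, $\overline{\t}_\o$ is then strictly increasing.

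Next I would compare the canonical maps. Consider the partition $S=S_+\dotcup S_-$ with $S_-=\{s\in S\mid \sba=\o\}$; this $S_-$ is a single $\sim$-class, so no element of $S_+$ is conjugate to an element of $S_-$, and the hypotheses of the subsection "Changing signs" are met. Let $(\ph_Y)'$ be the sign-changed map obtained there from $\ph_Y$. Evaluating both maps on the basis $\Sba$ of $\ZM[\Sba]$: for $\o'\in\Sba$ with $\o'\ne\o$ one gets $\overline{\t}_\o(\ph_X(\o'))=\can_Y(\o')=\ph_Y(\o')=(\ph_Y)'(\o')$, while for $\o'=\o$ one gets $\overline{\t}_\o(\ph_X(\o))=\can_Y(-\o)=-\ph_Y(\o)=(\ph_Y)'(\o)$. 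Hence $\overline{\t}_\o\circ\ph_X=(\ph_Y)'$. Now Proposition~\ref{strictement croissant}(c), applied to the strictly increasing morphism $\overline{\t}_\o$, shows that $\sim_?^{\ph_X}$ agrees with $\sim_?^{\overline{\t}_\o\circ\ph_X}=\sim_?^{(\ph_Y)'}$, and Corollary~\ref{signe cellules}, applied to $\ph_Y$, shows that $\sim_?^{(\ph_Y)'}$ agrees with $\sim_?^{\ph_Y}$. Chaining these equalities gives $\sim_?^X=\sim_?^{\t_\o^*(X)}$, which is the claim; the same chain with $\le_?$ in place of $\sim_?$ also yields the coincidence of the preorders $\le_?^X$ and $\le_?^{\t_\o^*(X)}$.

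I expect no serious difficulty here: the whole argument is formal once the two identifications of the second and third steps are in place, and those are routine. The one point deserving care is the interface with the Appendix — confirming that $\t_\o^*$ is push-forward by $\t_\o$ on $\positif(\ZM[\Sba])$ and that the total order $\le_X$ transports correctly through $\overline{\t}_\o$ — but this is bookkeeping rather than a genuine obstacle.
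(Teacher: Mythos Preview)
Your proposal is correct and follows essentially the same route as the paper: reduce to Corollary~\ref{signe cellules} via the strictly increasing isomorphism induced by $\t_\o$ on the quotient groups, using Proposition~\ref{strictement croissant}. The only cosmetic difference is the direction of the induced isomorphism --- you build $\overline{\t}_\o:\G_X\to\G_Y$ and land on $(\ph_Y)'$, whereas the paper builds $\G_{\t_\o^*(X)}\to\G_X$ and lands on $(\ph_X)'$ --- but since $\t_\o$ is an involution these are inverse to one another and the arguments are interchangeable.
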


\bigskip

\begin{proof}
The map $\t_\o$ induces a strictly increasing isomorphism 
$$\t_\o : \G_{\t_\o^*(X)} \stackrel{\sim}{\longrightarrow} \G_X.$$
Therefore, the relations $\sim_?^{\ph_{\t_\o^*(X)}}$ 
and $\sim_?^{\t_\o \circ \ph_{\t_\o^*(X)}}$ coincide 
(see the Proposition \ref{strictement croissant}). 
Now, let $\ph_X'=\t_\o \circ \ph_{\t_\o^*(X)} : \Sba \to \G_X$. 
But
$$\ph_X'(\o')=\begin{cases}
\ph_X(\o') & \text{if $\o \neq \o'$,}\\
-\ph_X(\o) & \text{if $\o=\o'$.}
\end{cases}$$
So the result follows from Corollary \ref{signe cellules}.
\end{proof}

\bigskip

\subsection{Conjectures} 
%Let $\RM[\Sba]$ denote the $\RM$-vector space $\RM \otimes_\ZM \ZM[\Sba]$. 
A rational hyperplane arrangement $\AG$ in $\positif(\ZM[\Sba])$ is called 
{\it complete} if $\HC_\o \in \AG$ for all $\o \in \Sba$. Recall that
\eqna
\HC_\o&=&\{X \in \positif(\ZM[\Sba])~|~\o \in X \cap -X\}\\
&=& \{X \in \positif(\ZM[\Sba])~|~\ph_X(\o)=0\}.
\endeqna
If $\AG$ is a complete arrangement and if $\FC$ is an $\AG$-facet, 
we denote by $W_\FC$ the standard parabolic subgroup generated 
by $\bigcup_{\o \in \Sba, \FC \subseteq \HC_\o} \o$. As in the 
Appendix, we denote by $\facets(\AG)$ and $\chambres(\AG)$ the set 
of $\AG$-facets and $\AG$-chambers respectively. 

We denote by $\relations(W)$ the set of equivalence relations on $W$. 
If $\RC$ and $\RC'$ are two equivalence relations on $W$, we write 
$\RC \infspe \RC'$ if $\RC$ is finer than $\RC'$ (i.e. if $x \RC y$ 
implies $x \RC' y$). The poset $(\relations(W),\infspe)$ is 
a lattice (i.e. every pair of two elements has a supremum and an 
infimum): $\sup(\RC,\RC')$ is the equivalence relation generated 
by $\RC$ and $\RC'$ while $\inf(\RC,\RC')$ is the equivalence relation 
$\RC''$ defined by $x \RC'' y$ if and only if $x \RC y$ and 
$x \RC' y$ (for $x$, $y \in W$). 
Finally, if $? \in \{L,R,LR\}$ and if $H$ is a subgroup of $W$, 
we denote by $\trans_?^H$ the equivalence relation on $W$ defined by the 
action of $H$ (or $H \times H$) by translation on $W$:
$$x ~\trans_?^H~ y \Leftrightarrow
\begin{cases}
\exists~w \in H,~y=wx,&\text{if $? = L$,}\\
\exists~w \in H,~y=xw,&\text{if $? = R$,}\\
\exists~w, w' \in H,~y=wxw',&\text{if $? = LR$.}
\end{cases}
$$

\bigskip

%\begin{quotation}
\noindent{\bf Conjecture A.} 
{\it Assume that $S$ is finite. 
Then there exists a finite complete rational hyperplane arrangement 
$\AG$ in $\positif(\ZM[\Sba])$ satisfying the following properties 
(for all $? \in \{L,R,LR\}$):
\begin{itemize}
\itemth{a} If $X$ and $Y$ are two positive subsets of $\ZM[\Sba]$ 
belonging to the same $\AG$-facet $\FC$, then the relations 
$\sim_?^X$ and $\sim_?^Y$ coincide (we will denote it by $\sim_?^\FC$).

\itemth{b} Let $\FC$ be an $\AG$-facet. 
Then the cells for the relation 
$\sim_?^\FC$ are the minimal subsets $C$ of $W$ satisfying the following 
conditions:
\begin{itemize}
\itemth{b1} For every chamber $\CC$ such that $\FC \subseteq \overline{\CC}$, 
$C$ is a union of cells for $\sim_?^\CC$;

\itemth{b2} $C$ is stable by translation by $W_\FC$ 
(on the left if  $?=L$, on the right if $?=R$, on the left and on the right 
if $?=LR$).
\end{itemize}
\end{itemize}}
%\end{quotation}

\bigskip

\remark{supremum} 
With the notation of the above Conjecture, the statement (b) is equivalent 
to the following one:
\begin{itemize}
\itemth{b'} {\it Let $\FC$ be an $\AG$-facet. Then 
$$\sim_?^\FC = \sup\Bigl(\trans_?^{W_\FC},
\sup_{\substack{\CC \in \chambres(\AG) \\ \FC \subseteq \overline{\CC}}} 
\sim_?^\CC\Bigr).$$}
\end{itemize}
Here, the suprema are computed in the set $\relations(W)$.\finl

\medskip

\remark{tr} 
The statement (b2) in the Conjecture A comes from 
Corollary \ref{KL semi-direct}. It is necessary, as it can 
already be seen in the case where $|S|=1$.\finl

\bigskip

The previous remark (together with the Corollary \ref{upper} of the
Appendix) implies immediately the following proposition (which 
``justifies'' the title of the paper):

\begin{prop}\label{upper rel}
If the Conjecture A holds for $(W,S)$, then the map 
$$\fonctio{\positif(\ZM[\Sba])}{\relations(W)}{X}{\sim_?^X}$$
is upper semicontinuous (for all $? \in \{L,R,LR\}$). 
\end{prop}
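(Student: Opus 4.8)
The plan is to deduce Proposition \ref{upper rel} directly from Conjecture A (which we are assuming holds) together with Remark \ref{supremum} and the topological input from the Appendix referred to as Corollary \ref{upper}. First I would fix $? \in \{L,R,LR\}$ and a complete rational hyperplane arrangement $\AG$ as furnished by Conjecture A. Recall that upper semicontinuity of the map $X \mapsto {\sim_?^X}$ means: for every $X_0 \in \positif(\ZM[\Sba])$ there is an open neighbourhood $V$ of $X_0$ such that ${\sim_?^X} \supseteq {\sim_?^{X_0}}$ for all $X \in V$, i.e. ${\sim_?^{X_0}} \infspe {\sim_?^X}$; equivalently, $\sim_?$ can only get coarser as one leaves $X_0$, or, phrased via cells, the cells can only split (the partition can only refine) away from $X_0$. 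So the target reduces to producing, for each $X_0$, a neighbourhood on which $\sim_?^{X_0}$ is finer than $\sim_?^X$.

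The key step is to identify, for a given $X_0$, the unique $\AG$-facet $\FC$ containing $X_0$, and to invoke the structure of $\AG$-facets from the Appendix: there is a small neighbourhood $V$ of $X_0$ such that every $Y \in V$ lies in a facet $\GC$ with $\FC \subseteq \overline{\GC}$ (this is precisely the content I expect Corollary \ref{upper} of the Appendix to provide — a facet meets the closure of exactly the facets it is "attached to", and a sufficiently small ball around a point of $\FC$ is covered by $\FC$ together with those larger facets). For such $Y$, let $\GC$ be its facet; by Conjecture A(a), ${\sim_?^Y} = {\sim_?^\GC}$ and ${\sim_?^{X_0}} = {\sim_?^\FC}$, so it suffices to show ${\sim_?^\FC} \infspe {\sim_?^\GC}$ whenever $\FC \subseteq \overline{\GC}$. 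Now I would use Remark \ref{supremum}(b'): $\sim_?^\FC = \sup\bigl(\trans_?^{W_\FC}, \sup_{\CC \in \chambres(\AG),\, \FC \subseteq \overline{\CC}} \sim_?^\CC\bigr)$, and likewise $\sim_?^\GC = \sup\bigl(\trans_?^{W_\GC}, \sup_{\CC \in \chambres(\AG),\, \GC \subseteq \overline{\CC}} \sim_?^\CC\bigr)$. Since $\FC \subseteq \overline{\GC}$, any chamber $\CC$ with $\GC \subseteq \overline{\CC}$ also satisfies $\FC \subseteq \overline{\CC}$ (closure is transitive on the face relation), so the second supremum defining $\sim_?^\GC$ is taken over a subset of the chambers appearing in the second supremum defining $\sim_?^\FC$; hence that part of $\sim_?^\GC$ is coarser than the corresponding part of $\sim_?^\FC$. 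Similarly $W_\GC \subseteq W_\FC$ — because $\FC \subseteq \overline{\GC} \subseteq \HC_\o$ for every $\o$ with $\GC \subseteq \HC_\o$ — so $\trans_?^{W_\GC} \infspe \trans_?^{W_\FC}$. As $\sup$ is monotone in each of its arguments in the lattice $(\relations(W),\infspe)$, combining these two comparisons gives ${\sim_?^\GC} \supseteq {\sim_?^\FC}$ in the sense of refinement, i.e. ${\sim_?^\FC} \infspe {\sim_?^\GC}$, which is what we needed.

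Putting it together: for each $X_0$ and each $? \in \{L,R,LR\}$, the neighbourhood $V$ above witnesses that $X \mapsto {\sim_?^X}$ does not decrease on $V$ relative to its value at $X_0$, which is exactly upper semicontinuity at $X_0$; since $X_0$ is arbitrary, the map is upper semicontinuous on all of $\positif(\ZM[\Sba])$.

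The main obstacle I anticipate is purely the topological bookkeeping about facets — verifying that a suitable neighbourhood $V$ of $X_0$ meets only facets $\GC$ with $\FC \subseteq \overline{\GC}$, and that the face relation behaves well with respect to closures and to the hyperplanes $\HC_\o$ (so that $\FC \subseteq \overline{\GC}$ forces both $W_\GC \subseteq W_\FC$ and inclusion of the relevant chamber sets). However, the excerpt explicitly flags that Remark \ref{supremum} "together with the Corollary \ref{upper} of the Appendix implies immediately the following proposition", so all of this is intended to be packaged in those cited results; the proof is therefore essentially the two-line monotonicity argument on suprema in $\relations(W)$ above, applied facet-by-facet, with Corollary \ref{upper} supplying the continuity statement and Conjecture A supplying the equalities $\sim_?^X = \sim_?^{\FC(X)}$.
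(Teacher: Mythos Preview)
Your overall plan is exactly the paper's: the proposition is meant to follow immediately from Conjecture~A (part~(a) and the reformulation~(b') in Remark~\ref{supremum}) together with Corollary~\ref{upper} of the Appendix. Two points, however, need correction.

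First, you have guessed wrongly at the content of Corollary~\ref{upper}. It is not merely the topological assertion that a small neighbourhood of a point in a facet $\FC$ meets only facets $\GC$ with $\FC \subseteq \overline{\GC}$; it is already the full upper-semicontinuity criterion: any map $\xi : \positif(\L) \to P$ that is (1) constant on $\AG$-facets and (2) order-reversing (i.e.\ $\FC \infspe \FC'$ implies $\xi(\FC) \ge \xi(\FC')$) is upper semicontinuous. The paper's proof is therefore just: check (1) via Conjecture~A(a) and check (2) via the sup formula of Remark~\ref{supremum}. Your argument does not cite this criterion but re-derives its proof in this special case, which is fine but longer than needed.

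Second, and more importantly, you have the monotonicity direction backwards throughout. Suppose $\FC \subseteq \overline{\GC}$. The chambers $\CC$ with $\GC \subseteq \overline{\CC}$ form a \emph{subset} of those with $\FC \subseteq \overline{\CC}$, so the corresponding supremum in $\relations(W)$ is \emph{finer} (a sup over fewer relations is smaller for $\infspe$), not coarser as you wrote. Together with $\trans_?^{W_\GC} \infspe \trans_?^{W_\FC}$ (which you have correctly), the sup formula yields $\sim_?^\GC \infspe \sim_?^\FC$, the opposite of your stated conclusion. This is the direction one expects: Conjecture~A(b) says precisely that cells at the boundary facet $\FC$ are unions of cells at nearby larger facets, so the partition \emph{refines} as one leaves $X_0$ --- your parenthetical ``the cells can only split away from $X_0$'' is right, but it contradicts your formal claim $\sim_?^{X_0} \infspe \sim_?^X$. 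With the inequality corrected to $\sim_?^X \infspe \sim_?^{X_0}$ for $X$ near $X_0$ (equivalently $\xi(\FC) \ge \xi(\GC)$ whenever $\FC \infspe \GC$), this is exactly hypothesis~(2) of Corollary~\ref{upper}, and the proposition follows.
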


\begin{coro}\label{upper cell}
Let $? \in \{L,R,LR\}$, let $w \in W$ and let $\PC(W)$ denote the 
power set of $W$. 
If the Conjecture A holds for $(W,S)$, then the map 
$$\fonctio{\positif(\ZM[\Sba])}{\PC(W)}{X}{\Cell_?^X(w)}$$
is upper semicontinuous. 
\end{coro}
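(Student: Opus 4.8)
The plan is to derive Corollary \ref{upper cell} from Proposition \ref{upper rel} (equivalently, from Remark \ref{supremum} together with Corollary \ref{upper} of the Appendix), using nothing more than the observation that making an equivalence relation finer can only shrink the class of a fixed element. So assume Conjecture A holds, fix $w \in W$ and $? \in \{L,R,LR\}$, let $\AG$ be a finite complete rational arrangement as in Conjecture A, and fix a point $X_0 \in \positif(\ZM[\Sba])$ lying in the $\AG$-facet $\FC_0$.

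First I would recall the standard fact --- already used to obtain Corollary \ref{upper} --- that the open star of $\FC_0$,
\[
U=\bigcup_{\substack{\FC \in \facets(\AG)\\ \FC_0 \subseteq \overline{\FC}}} \FC,
\]
is an open neighbourhood of $X_0$ in $\positif(\ZM[\Sba])$. The key step is then to check that $\sim_?^X \infspe \sim_?^{X_0}$ for every $X \in U$. Write $X \in \FC$ with $\FC_0 \subseteq \overline{\FC}$; by Conjecture A\,(a), $\sim_?^X = \sim_?^\FC$ and $\sim_?^{X_0} = \sim_?^{\FC_0}$, so by Remark \ref{supremum}\,(b') it suffices to compare the two suprema. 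Now $\FC_0 \subseteq \overline{\FC}$ forces $\overline{\FC} \subseteq \overline{\CC}$ for every chamber $\CC$ with $\FC \subseteq \overline{\CC}$, so the chambers adjacent to $\FC$ form a subset of those adjacent to $\FC_0$; and $\FC \subseteq \HC_\o$ forces $\FC_0 \subseteq \overline{\FC} \subseteq \HC_\o$, hence $W_\FC \subseteq W_{\FC_0}$ and $\trans_?^{W_\FC} \infspe \trans_?^{W_{\FC_0}}$. Therefore the family whose supremum is $\sim_?^\FC$ is, termwise, finer than a subfamily of the one whose supremum is $\sim_?^{\FC_0}$, and a supremum over such a family is finer --- this monotonicity is exactly the content of Corollary \ref{upper}. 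Thus $\sim_?^\FC \infspe \sim_?^{\FC_0}$, as wanted.

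To conclude, I would note that if $\sim_?^X$ is finer than $\sim_?^{X_0}$ then the $\sim_?^X$-class of $w$ is contained in its $\sim_?^{X_0}$-class, so $\Cell_?^X(w) \subseteq \Cell_?^{X_0}(w)$ for all $X \in U$. As $X_0$ ranges over $\positif(\ZM[\Sba])$, this is precisely the upper semicontinuity of $X \mapsto \Cell_?^X(w)$ for the inclusion order on $\PC(W)$; equivalently, for each fixed $x \in W$ the set $\{X \mid x \in \Cell_?^X(w)\} = \{X \mid x \sim_?^X w\}$ is closed, since its complement contains the open star of each of its points.

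I do not expect a genuine obstacle: the substance is entirely carried by Proposition \ref{upper rel} and Corollary \ref{upper}. The only points demanding care are conventional --- fixing what ``upper semicontinuous'' should mean for a $\PC(W)$-valued map (the pointwise formulation above, which makes sense even when $W$ is infinite, as in the affine examples), and checking that $\RC \mapsto [w]_{\RC}$ is order-preserving from $(\relations(W),\infspe)$ to $(\PC(W),\subseteq)$ in the direction matching the semicontinuity statement of Proposition \ref{upper rel}. Both are immediate once the conventions of the Appendix are in hand.
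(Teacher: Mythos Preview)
Your argument is correct and follows the same route as the paper: the corollary is stated immediately after Proposition~\ref{upper rel} with no separate proof, the intended deduction being exactly what you spell out --- the monotonicity $\FC \infspe \FC' \Rightarrow \sim_?^{\FC'} \infspe \sim_?^{\FC}$ (obtained from Remark~\ref{supremum} and used to feed Corollary~\ref{upper}) composes with the order-preserving map $\RC \mapsto [w]_\RC$ to give the required local inequality $\Cell_?^X(w) \subseteq \Cell_?^{X_0}(w)$ on the open star of $X_0$. Your middle paragraph in effect reproves Proposition~\ref{upper rel} rather than quoting it, but the content is the same.
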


\bigskip

\subsection{Another form of Conjecture A} 
Let us now give a translation of this conjecture in terms of 
maps $\ph : \Sba \to \G$. For this, let $\SGN =\{+,-,0\}$ and 
let $\EC$ be a finite set of elements in $\ZM[\Sba]\setminus \{0\}$. Let 
$\SGN^\EC$ denote the set of maps $\EC \to \SGN$. If 
$X \in \positif(\ZM[\Sba])$, we set:
$$\fonction{\sgn_\EC(X)}{\EC}{\SGN}{\l}{\begin{cases}
+ & \text{if $\l \not\in -X$,}\\
0 & \text{if $\l \in X \cap (-X)$,}\\
- & \text{if $\l \not\in X$.}
\end{cases}}$$
This defines a map 
$$\sgn_\EC : \positif(\ZM[\Sba]) \longto \SGN.$$
Similarly, we set 
$$\fonction{\sgn_\EC(\ph)}{\EC}{\SGN}{\l}{\begin{cases}
+ & \text{if $\ph(\l) > 0$,}\\
0 & \text{if $\ph(\l) = 0$,}\\
- & \text{if $\ph(\l) < 0$.}
\end{cases}}$$
In other words,
\equat\label{sgn}
\sgn_\EC(X)=\sgn_\EC(\ph_X)\quad\text{and}
\quad \sgn_\EC(\ph)=\sgn_\EC(\Pos(\ph)).
\endequat
Let $\AG_\EC$ be the finite rational hyperplane arrangement 
$\{\HC_\l~|~\l \in \EC\}$. By the very definition of facets, 
the $\AG_\EC$-facets are exactly the non-empty fibers of the map
$\sgn_\EC : \positif(\ZM[\Sba]) \longto \SGN^\EC$. 
If $\FC$ is an $\AG_\EC$-facet, then we denote by $\sgn_\EC(\FC)$ 
the element $\sgn_\EC(X) \in \SGN^\EC$, where $X$ is some (or any) element 
of $\FC$. 

We now endow $\SGN$ with the unique partial order $\infspe$ such that 
$0 \infspe +$, $0 \infspe -$ and $+$ and $-$ are not comparable. 
This defines a partial order on $\SGN^\EC$ (componentwise) which 
is still denoted by $\infspe$. Then, by Proposition \ref{facettes}, 
we have, for all facets $\FC$ and $\FC'$, 
\equat\label{ordre sgn}
\text{\it $\FC \subseteq \overline{\FC}'$ if and only 
if $\sgn_\EC(\FC) \infspe \sgn_\EC(\FC')$.}
\endequat
(Note that this implies, by Corollary \ref{upper}, that 
the map $\sgn_\EC$ is lower semicontinuous.) 
Finally, we say that the map $\ph$ is {\it $\EC$-open} if $\Pos(\ph)$ 
belongs to an $\AG_\EC$-chamber: equivalently, $\ph$ is $\EC$-open 
if $\sgn_\EC(\ph)(\l) \neq 0$ for all $\l \in \EC \setminus\{0\}$.

By the above discussion, we get that 
the next Conjecture is clearly equivalent to Conjecture A 
(here, we denote by $W_\ph$ the parabolic subgroup generated by 
$\{s \in S~|~\ph(s)=0\}$).

\bigskip

\noindent{\bf Conjecture A'.} 
{\it Assume that $S$ is finite. Then there exists a finite set 
$\EC$ in $\ZM[\Sba]\setminus\{0\}$ containing $\Sba$ and 
such that, for all $? \in \{L,R,LR\}$, we have:
\begin{itemize}
\itemth{a} If $\G$ and $\G'$ are two abelian ordered groups and if 
$\ph : \Sba \to \G$ and $\ph' : \Sba \to \G'$ are two maps such 
that $\sgn_\EC(\ph)=\sgn_\EC(\ph')$, then the relations $\sim_?^\ph$ 
and $\sim_?^{\ph'}$ coincide.

\itemth{b} If $\G$ is a totally ordered abelian group and if 
$\ph : \Sba \to \G$ is a map, then the cells for $\sim_?^\ph$ are the minimal 
subsets $C$ of $W$ such that:
\begin{itemize}
\itemth{b1} For all totally ordered abelian group $\G'$ and for all 
$\EC$-open maps $\ph' : \Sba \to \G'$ such that 
$\sgn_\EC(\ph) \infspe \sgn_\EC(\ph')$, 
$C$ is a union of cells for $\sim_?^{\ph'}$.

\itemth{b2} $C$ is stable by translation by $W_\ph$ (on the left if 
$?=L$, on the right if $?=R$, on the left and on the right if $?=LR$).
\end{itemize}
\end{itemize}}

\bigskip

An element $\l \in \ZM[\Sba]$ is called {\it reduced} 
if $\l \neq 0$ and $\ZM[\Sba]/\ZM \l$ is torsion-free. If $\HC$ is a rational 
hyperplane, then there exist only two reduced elements $\l \in \ZM[\Sba]$ 
such that $\HC=\HC_\l$ (one is the opposite of the other). 
A subset $\EC$ of $\ZM[\Sba]$ is called {\it reduced} if all 
its elements are reduced. It is called {\it complete} if 
$\Sba \subseteq \EC$. It is called {\it symmetric} if $\EC=-\EC$. 
If $\AG$ is a rational hyperplane arrangement, then there exists 
a unique reduced symmetric subset $\EC$ of $\ZM[\Sba]$ such that 
$\AG=\AG_\EC$. In this case, $\AG$ is complete if and only if $\EC$ is. 

\bigskip

\subsection{Essential hyperplanes} 
The Remark \ref{supremum} implies easily the following

\begin{prop}\label{essentiellement}
If the Conjecture A holds for $(W,S)$ and two finite complete 
rational hyperplane arrangements $\AG$ and $\AG'$ of 
$\positif(\ZM[\Sba])$, then it holds for $(W,S)$ 
and for the finite complete arrangement $\AG \cap \AG'$.
\end{prop}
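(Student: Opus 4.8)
The plan is to use the characterization of the relations $\sim_?^\FC$ given in Remark \ref{supremum}(b') together with the observation that refining a complete arrangement can only subdivide facets, never merge them. Concretely, write $\AG'' = \AG \cap \AG'$; this is again a finite complete rational hyperplane arrangement, since $\HC_\o \in \AG$ (hence $\HC_\o \in \AG''$) for all $\o \in \Sba$. I must check that $\AG''$ satisfies properties (a) and (b) of Conjecture A. Property (a) is immediate: any $\AG''$-facet $\FC''$ is contained in a unique $\AG$-facet $\FC$ (because every hyperplane of $\AG$ is a hyperplane of $\AG''$, so the sign vector with respect to $\AG$ is constant on $\FC''$), and by hypothesis $\sim_?^X$ is constant on $\FC$, hence a fortiori on $\FC''$; so $\sim_?^{\FC''}$ is well-defined and equals $\sim_?^\FC$.

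For property (b) I would work with the equivalent formulation (b') of Remark \ref{supremum}. Fix an $\AG''$-facet $\FC''$, and let $\FC$ (resp. $\FC'$) be the unique $\AG$-facet (resp. $\AG'$-facet) containing it. First, the parabolic subgroups coincide: $W_{\FC''} = W_\FC = W_{\FC'}$, because $\FC'' \subseteq \HC_\o$ iff $\FC \subseteq \HC_\o$ iff $\FC' \subseteq \HC_\o$ for $\o \in \Sba$ (all three arrangements are complete, so $\HC_\o$ belongs to each of them, and the sign of $\o$ is constant on $\FC$, on $\FC'$, and on $\FC''$ and agrees on the nested facets). Next I claim
$$
\sup_{\substack{\CC'' \in \chambres(\AG'') \\ \FC'' \subseteq \overline{\CC''}}} \sim_?^{\CC''}
\;=\;
\sup\Bigl(\sup_{\substack{\CC \in \chambres(\AG) \\ \FC \subseteq \overline{\CC}}} \sim_?^{\CC}\;,\;
\sup_{\substack{\CC' \in \chambres(\AG') \\ \FC' \subseteq \overline{\CC'}}} \sim_?^{\CC'}\Bigr),
$$
where all suprema are in $\relations(W)$. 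Since $\AG'' $-chambers are (open) $\AG$-chambers intersected with $\AG'$-chambers, every $\AG''$-chamber $\CC''$ with $\FC'' \subseteq \overline{\CC''}$ is contained in an $\AG$-chamber $\CC$ with $\FC \subseteq \overline{\CC}$ (and in an $\AG'$-chamber $\CC'$ with $\FC' \subseteq \overline{\CC'}$); and by Conjecture A for $\AG$, the relation $\sim_?^{\CC}$ equals $\sim_?^{\CC''}$ since $\CC$ and $\CC''$ meet the same $\AG$-facet, namely $\CC$ itself — more carefully, $\sim_?^X$ is constant on $\CC$ hence on $\CC''\subseteq\CC$. This gives "$\supseteq$" directions; for the reverse inclusion I use that every $\AG$-chamber $\CC$ with $\FC\subseteq\overline\CC$ contains at least one $\AG''$-chamber $\CC''$ with $\FC''\subseteq\overline{\CC''}$ (take any $\AG''$-chamber in $\CC$ having $\FC''$ in its closure; such exists because $\FC'' \subseteq \overline{\CC}$ and locally near a point of $\FC''$ the arrangement $\AG'$ subdivides $\CC$ into $\AG''$-chambers whose closures still contain $\FC''$), and $\sim_?^{\CC}=\sim_?^{\CC''}$. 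Combining the displayed equality with $W_{\FC''}=W_\FC=W_{\FC'}$ and applying (b') three times (for $\AG$ at $\FC$, for $\AG'$ at $\FC'$, and associativity of $\sup$) yields $\sim_?^{\FC''}=\sup(\trans_?^{W_{\FC''}},\sup_{\CC''\supseteq\FC''}\sim_?^{\CC''})$, which is (b') for $\AG''$ at $\FC''$.

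The main obstacle is the purely combinatorial lemma about how chambers of $\AG''$ sit inside chambers of $\AG$ relative to a fixed facet $\FC''$ — specifically, the claim that each $\AG$-chamber $\CC$ whose closure contains $\FC$ contains an $\AG''$-chamber whose closure contains $\FC''$. This is a local statement at a point $x$ of $\FC''$: passing to the arrangement of those hyperplanes of $\AG''$ through $x$ and intersecting with a small neighbourhood, one reduces to the elementary fact that a chamber of a sub-arrangement whose closure contains a given facet is a union of chambers of the finer arrangement, at least one of whose closures still contains that facet. I would isolate this as a short lemma about facets and chambers of nested hyperplane arrangements (a companion to Proposition \ref{facettes}), prove it by the sign-vector description of facets, and then the rest of the argument is the bookkeeping with suprema in $\relations(W)$ sketched above.
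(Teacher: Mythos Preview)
Your argument rests on a reversed inclusion. You write that ``every hyperplane of $\AG$ is a hyperplane of $\AG''$'' and deduce that each $\AG''$-facet sits inside a unique $\AG$-facet; but $\AG'' = \AG \cap \AG'$ is contained in $\AG$, not the other way round. Having \emph{fewer} hyperplanes, $\AG''$ yields a \emph{coarser} partition: every $\AG$-facet lies in a unique $\AG''$-facet, and an $\AG''$-chamber is typically a union of several $\AG$-facets of various dimensions, not ``an $\AG$-chamber intersected with an $\AG'$-chamber''. Your verification of (a) therefore collapses at the first step: two points $X,Y$ in the same $\AG''$-facet $\FC''$ may well lie in different $\AG$-facets, so constancy of $\sim_?$ on $\AG$-facets says nothing directly about constancy on $\FC''$.

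The actual argument runs in the opposite direction. Given $X\in\FC''$, let $\FC$ be the (smaller) $\AG$-facet of $X$. Completeness of $\AG''$ gives $W_\FC=W_{\FC''}$, since the hyperplanes $\HC_\o$ all lie in $\AG''$. By (b$'$) for $\AG$ one has $\sim_?^X=\sup\bigl(\trans_?^{W_{\FC''}},\sup_\CC\sim_?^\CC\bigr)$ over the $\AG$-chambers $\CC$ whose closure contains $\FC$; each such $\CC$ lies in some $\AG''$-chamber $\CC''$ whose closure contains $\FC''$, and conversely every such $\CC''$ contains some such $\CC$. One must then argue that this supremum is independent of which $\AG$-facet $\FC\subseteq\FC''$ one started from --- this is where (b$'$) for \emph{both} $\AG$ and $\AG'$ comes in, and it is the genuine content of the proof. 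Your ``combinatorial lemma'' about nested arrangements is close to what is needed, but the nesting must be reversed throughout: you should be decomposing $\AG''$-facets into $\AG$-facets, not the other way round.
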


\begin{proof}
Clear. 
\end{proof}

The Proposition \ref{essentiellement} shows that, 
if the Conjecture A holds for $(W,S)$, there exists 
a unique minimal finite complete rational hyperplane arrangement $\AG$ 
such that the statements (a) and (b) of Conjecture A hold. 
We call the elements of this minimal arrangement 
the {\it essential hyperplanes} of $(W,S)$: 
indeed, if $(W,S)$ is finite, they should be the same as the 
{\it essential hyperplanes} defined by M. Chlouveraki 
\cite[\S 4.3.1]{chlouveraki}, which appear when she studied 
the Rouquier blocks of cyclotomic Hecke algebras associated 
to complex reflection groups. 

Similarly, the Proposition \ref{essentiellement} shows that, 
if the Conjecture A' holds for $(W,S)$, there exists 
a unique minimal finite complete symmetric reduced subset $\EC$ of 
$\ZM[\Sba]$ such that the statements (a) and (b) of Conjecture A' 
holds: they will be called the {\it essential} elements 
of $(W,S)$. 

\bigskip

\remark{essentiel symetrique} 
The Proposition \ref{symetrie} shows that, if the Conjecture A 
holds for $(W,S)$, then the set of essential hyperplanes (respectively 
the set of essential elements) of $(W,S)$ is stable by the action 
of all the symmetries $\t_\o^*$ (respectively $\t_\o$), $\o \in \Sba$.\finl

\bigskip

%\noindent{\sc Comments - }
%Let us fix here $w \in W$ and $? \in \{L,R,LR\}$, and assume 
%that the Conjecture A holds for $(W,S)$. 
%Let $\xi_w : \positif(\ZM[\Sba]) \to \PC(W)$, $X \mapsto \Cell_?^X(w)$ 
%(here, $\PC(W)$ is the power set of $W$, endowed with its partial 
%ordering $\subseteq$). 
%\begin{itemize}
%\item The statement (a) of the Conjecture A 
%says that $\xi_w$ is constant on facets. 
%
%\item The statements (a) and (b1) together imply 
%that $\xi_w$ is upper semicontinuous (see the Corollary \ref{upper} 
%in the Appendix).
%
%\item Note also that any cell for $\sim_?^\FC$ is stable by (appropriate) 
%translation by $W_\FC$ (see Corollary \ref{KL semi-direct}): 
%indeed, if $X \in \FC$ and $\FC \subseteq \HC_\o$, then $\ph_X(s)=0$ 
%for all $s \in \o$. This justifies the property (b2). 
%\end{itemize}
%Finally, note that the minimality condition in (b) provides a 
%deep information: to compute the cells for some choice of $\ph$, 
%it would be sufficient to know the cells for $\sim_?^\CC$ for all 
%chambers $\CC$.\finl
%
%\bigskip

\remark{parabolique essentiel} 
Let $I$ be a subset of $S$. If $s$, $t \in I$, we shall write 
$s \sim_I t$ if $s$ and $t$ are conjugate in $W_I=\langle I\rangle$. We set $\Iba=I/\!\!\sim_I$. Note that $\Iba$ is not necessarily the image of $I$ 
in $\Sba$. Nevertheless, the inclusion $I \injto S$ induces a map 
$\Iba \to \Sba$, which extends by linearity to a map 
$\g_I : \ZM[\Iba] \to \ZM[\Sba]$. By functoriality (see the Appendix), 
this induces a map $\g_I^* : \positif(\ZM[\Sba]) \to \positif(\ZM[\Sba])$. 
Assume here that Conjecture A holds for $(W,S)$ and $(W_I,I)$: let 
$\AG$ (respectively $\AG_I$) denote the set of essential hyperplanes 
for $(W,S)$ (respectively $(W_I,I)$). Then, since any left cell of $W_I$ 
is the intersection of a left cell of $W$ with $W_I$ 
(see \cite{geck induction}), we get that $(\g_I^*)^{-1}(\AG_I)$ 
is contained in $\AG \cup \{\positif(\ZM[\Sba])\}$. 

Equivalently, if $\EC$ and $\EC_I$ denote the sets of 
essential elements of $(W,S)$ and $(W_I,I)$ respectively, then 
$\g_I(\EC_I) \subseteq \EC \cup \{0\}$.\finl

\bigskip

\example{s=1} 
If $|\Sba|=1$, then Conjecture A is obviously true: the set of essential 
hyperplanes is $\{\HC_S\}$ (note that $\Sba=\{S\}$) and the set of 
essential elements is $\{S,-S\} \subseteq \ZM[\Sba]$. Indeed, 
if $\ph : \Sba \to \G$ and $\ph' : \Sba \to \G'$ are two 
maps such that $\ph(S) > 0$ and $\ph'(S) > 0$, then the relations 
$\sim_?^\ph$ and $\sim_?^{\ph'}$ coincide. On the other hand, if 
$\ph(S)=0$, then $W$ contains only one cell for $\sim_?^\ph$, namely 
$W$ itself: it is clearly the smallest subset of $W$ which is stable 
by translation by $W_\ph=W$.\finl

\bigskip

\subsection{Example: the case where ${\boldsymbol{|\Sba|=2}}$
\label{soussection 2}} 
Since the Conjecture A is expressed in terms of the topology of 
the set $\positif(\ZM[\Sba])$, it might be difficult to understand 
it concretely. The purpose of this example is to give a concrete 
version of this statement whenever $|\Sba|=2$: this will show 
that Conjecture A contains the Conjecture 0 of the introduction, 
and gives some precision for the case where $\ph$ vanishes 
at some simple reflections. 

So assume here that $|\Sba|=2$ and write $\Sba=\{\o_1,\o_2\}$. 
We shall identify $\ZM[\Sba]$ with $\ZM^2$ 
(through $(\l,\mu) \mapsto \l\o_1 + \mu\o_2$). 
If $r$ is a rational number, we shall denote by $\HC_r$ the hyperplane 
$\HC_{-dr,d}$, where $d$ is a non-zero natural number such that $dr \in \ZM$. 
Then $\HC_r=\{\ZM[\Sba],X_r^+,X_r^-\}$, where 
$$X_r^+=\{(\l,\mu) \in \ZM^2~|~\l + r \mu \ge 0\}\quad\text{and}\qquad
X_r^-=\{(\l,\mu) \in \ZM^2~|~\l + r \mu \le 0\}.$$
We set $\HC_\infty=\HC_{(1,0)}$. Then 
$\HC_\infty=\{\ZM[\Sba],X_\infty^+,X_\infty^-\}$, where 
$$X_\infty^+=\{(\l,\mu) \in \ZM^2~|~\mu \ge 0\}\quad\text{and}\qquad
X_\infty^-=\{(\l,\mu) \in \ZM^2~|~\mu \le 0\}.$$

Now, since $\G$ is torsion-free, the natural map 
$\G \to \QM \otimes_\ZM \G$ is injective, so we shall view 
$\G$ as embedded in the $\QM$-vector space $\QM \otimes_\ZM \G$: 
in particular, if $r \in \QM$ and $\g \in \G$, then $r\g$ is well-defined. 
Moreover, the order on $\G$ extends uniquely to a total order on 
$\QM \otimes_\ZM \G$ that we still denote by $\le$. Now, let 
$$\ph(\o_1) = a \quad\text{and}\quad \ph(\o_2)=b.$$
Then, one can immediately translate in concrete terms the fact 
that $\Pos(\ph)$ belongs or not to one of these hyperplanes:

\bigskip

\begin{lem}\label{traduction}
With the above notation, we have:
\begin{itemize}
\itemth{a} $\Pos(\ph) \in \HC_r$ (respectively $\Pos(\ph) \in \HC_\infty$) 
if and only if $b = r a$ (respectively $a=0$).

\itemth{b} $\Pos(\ph) = X_r^+$ (respectively $X_r^-$) if and only if 
$b=ra$ and $a > 0$ (respectively $a < 0$).

\itemth{c} $\Pos(\ph) = X_\infty^+$ (respectively $X_\infty^-$) 
if and only if $a=0$ and $b > 0$ (respectively $b < 0$).
\end{itemize}
\end{lem}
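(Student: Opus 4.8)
The plan is to read everything off from the definitions recalled in the Appendix and in Remark~\ref{phi X traduction}, so that the proof reduces to an elementary computation with the linear form $(\l,\mu)\mapsto \l a+\mu b$ on $\ZM^2=\ZM[\Sba]$. First I would recall that $\Pos(\ph)$ is the positive subset $\{\l\in\ZM[\Sba]\mid \ph(\l)\ge 0\}$ (viewing $\ph$ as a morphism $\ZM[\Sba]\to\G$); equivalently, for $\l\in\ZM[\Sba]$ one has $\l\in\Pos(\ph)$ iff $\ph(\l)\ge 0$, $\l\in-\Pos(\ph)$ iff $\ph(\l)\le 0$, and $\l\in\Pos(\ph)\cap(-\Pos(\ph))$ iff $\ph(\l)=0$. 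Under the identification $\ZM[\Sba]\simeq\ZM^2$ one has $\ph(\l\o_1+\mu\o_2)=\l a+\mu b$.

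Next I would prove (a). By definition $\HC_r=\HC_{-dr,d}$ (independently of the admissible $d$, since $\HC_\l$ depends only on the rational line $\QM\l$), and $\HC_\l=\{X\in\positif(\ZM[\Sba])\mid \l\in X\cap(-X)\}$. Hence $\Pos(\ph)\in\HC_r$ if and only if $\ph(-dr\,\o_1+d\,\o_2)=0$, i.e. $d(b-ra)=0$; since $\G$ is torsion-free, embeds in $\QM\otimes_\ZM\G$, and $d\neq 0$, this is exactly $b=ra$. Likewise $\HC_\infty=\HC_{(1,0)}$, so $\Pos(\ph)\in\HC_\infty$ if and only if $\ph(\o_1)=a=0$.

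Then I would turn to (b) and (c). If $b=ra$ and $a>0$, then $\ph(\l\o_1+\mu\o_2)=\l a+\mu b=(\l+r\mu)a$, which is $\ge 0$ if and only if $\l+r\mu\ge 0$; hence $\Pos(\ph)=X_r^+$. Conversely, if $\Pos(\ph)=X_r^+$ then, since $X_r^+\in\HC_r$, part (a) already gives $b=ra$; moreover $\o_1=(1,0)\in X_r^+$ while $-\o_1=(-1,0)\notin X_r^+$, so $\ph(\o_1)=a\ge 0$ and $\ph(-\o_1)=-a<0$, forcing $a>0$. The statement for $X_r^-$ follows by replacing $a>0$ with $a<0$, and (c) is proved in the same way using $\o_2=(0,1)$ and $\HC_\infty=\HC_{(1,0)}$ in place of $\o_1$ and $\HC_r$.

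I do not expect a genuine obstacle here: the lemma is essentially bookkeeping once the Appendix conventions for $\Pos$, $\HC_\l$ and the normalisation $\HC_r=\HC_{-dr,d}$ are in place. The only point deserving a line of care is the implication $d(b-ra)=0\Rightarrow b=ra$, which uses the torsion-freeness of $\G$ together with the extension of the order to $\QM\otimes_\ZM\G$ recalled just before the statement; everything else is a direct unwinding of definitions.
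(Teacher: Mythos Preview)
Your proof is correct. The paper itself gives no explicit proof of this lemma: it is introduced with the sentence ``one can immediately translate in concrete terms the fact that $\Pos(\ph)$ belongs or not to one of these hyperplanes'' and the statement is left without further justification. Your argument is precisely the intended direct unwinding of the definitions of $\Pos(\ph)$, $\HC_\l$, $X_r^\pm$ and $X_\infty^\pm$, together with the observation that $\G$ is torsion-free (so $d(b-ra)=0$ forces $b=ra$), which is exactly what the paper invites the reader to supply.
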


\bigskip

For simplification, we set $\t_i = \t_{\o_i}$. 
Now, let $\AG$ be a finite complete rational hyperplane arrangement 
in $\positif(\ZM[\Sba])$ which is stable under the actions of $\t_1$ 
and $\t_2$. Since $\t_1 \circ \t_2=-\Id_{\ZM[\Sba]}$, this 
is equivalent to say that it is stable under $\t_1$. Since 
all hyperplanes of $\positif(\ZM[\Sba])$ are of the form 
$\HC_r$ for $r \in \QM \cup \{\infty\}$, and since 
$\t_1(\HC_r)=\HC_{-r}$ if $r \in \QM$ (and $\t_1(\HC_\infty)=\HC_\infty$), 
there exist positive rational numbers 
$0 < r_1 < r_2 < \cdots < r_m$ such that 
$$\AG=\{\HC_0,\HC_{r_1},\HC_{-r_1},\HC_{r_2},\HC_{-r_2},\dots,
\HC_{r_m},\HC_{-r_m},\HC_\infty\}.$$
Let us draw $\Pos^{-1}(\AG)$ in 
$\RM[\Sba]^*=\RM \o_1^* \oplus \RM \o_2^*$, where $(\o_1^*,\o_2^*)$ 
is the dual basis of $(\o_1,\o_2)$:
\unitlength0.95pt
\begin{center}
\begin{picture}(450,390)
\put(60,200){\line(1,0){330}}
%\put(50,0){\line(1,1){200}}
\put(225,45){\line(0,1){310}}
%\put(50,200){\line(1,-1){200}}
\put(75,100){\line(3,2){300}}
\put(75,300){\line(3,-2){300}}
\put(150,350){\line(1,-2){150}}
\put(150,50){\line(1,2){150}}
\put(65,146.67){\line(3,1){320}}
\put(65,253.33){\line(3,-1){320}}
\put(225,200){\circle*{4}}
\put(255,200){\circle*{4}}\put(257,205){$\SS{\o_1^*}$}
\put(225,230){\circle*{4}}\put(230,232){$\SS{\o_2^*}$}
\put(15,197){$\HC_0^- \to$}
\put(96,345){$\HC_{-r_m}^- \to$}
\put(103,48){$\HC_{r_m}^- \to$}
\put(20,144){$\HC_{r_1}^- \to$}
\put(15,250){$\HC_{-r_1}^- \to$}
\put(181,22){$\HC_\infty^-$}\put(207,35){$\nearrow$}
\put(185,370){$\HC_\infty^+$}\put(207,357){$\searrow$}
\put(25,297){$\HC_{-r_2}^- \to$}
\put(30,95){$\HC_{r_2}^- \to$}
\put(78,331){\circle*{1.3}}\put(355,62){\circle*{1.3}}
\put(74,328){\circle*{1.3}}\put(359,65){\circle*{1.3}}
\put(70,325){\circle*{1.3}}\put(363,68){\circle*{1.3}}
\put(68,62){$\ddots$}\put(355,325){$\ddots$}
\put(305,345){$\leftarrow$}\put(320,345){$\HC_{r_m}^+$}
\put(383,297){$\leftarrow$}\put(398,297){$\HC_{r_2}^+$}
\put(392,250){$\leftarrow$}\put(407,250){$\HC_{r_1}^+$}
\put(395,197){$\leftarrow$}\put(410,197){$\HC_0^+$}
\put(392,144){$\leftarrow$}\put(407,144){$\HC_{r_1}^+$}
\put(383,95){$\leftarrow$}\put(398,95){$\HC_{r_2}^+$}
\put(305,48){$\leftarrow$}\put(320,48){$\HC_{r_m}^+$}
\put(340,215){$\CC_0$}\put(102,216){$\CC_0^{(1)}$}
\put(340,178){$\CC_0^{(2)}$}\put(97,179){$-\CC_0$}
\put(330,250){$\CC_1$}\put(110,248){$\CC_1^{(1)}$}
\put(335,138){$\CC_1^{(2)}$}\put(105,145){$-\CC_1$}
\put(247,310){$\CC_m$}\put(185,310){$\CC_m^{(1)}$}
\put(240,80){$\CC_m^{(2)}$}\put(180,80){$-\CC_m$}
\put(298,280){$\ddots$}\put(143,110){$\ddots$}
\put(293,110){\circle*{1.3}}
\put(297,113){\circle*{1.3}}
\put(301,116){\circle*{1.3}}
\put(145,281){\circle*{1.3}}
\put(149,284){\circle*{1.3}}
\put(153,287){\circle*{1.3}}
\put(195,170){$\FC$}\put(207,184){$\nearrow$}
%\put(220,115){$\CC^+$}
%\put(220,80){$\CC^-$}
%\put(15,-20){$\HC_{2s-t}$}\put(35,-10){$\nearrow$}
%\put(115,-20){$\HC_{2s-t}$}\put(135,-10){$\nearrow$}
\end{picture}
\end{center}
In this figure, we have written $\HC_r^\pm=\{X_r^\pm\}$ and 
we have also drawn all the $\AG$-facets (or at least their 
preimage under $\Pos$): note that 
$\CC_i^{(j)}=\t_j(\CC_i)$ (for $0 \le i \le m$ and 
$1 \le j \le 2$) and that we have the following property 
(where $r_0=0$ and $r_{m+1}=\infty$):

\bigskip

\begin{lem}\label{traduction bis}
Let $i \in \{0,1,2,\dots,m-1,m\}$. 
Then $\Pos(\ph) \in \CC_i$ if and only if 
$r_i a < b < r_{i+1} a$ and $a > 0$.
\end{lem}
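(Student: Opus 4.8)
The plan is to translate the condition $\Pos(\ph)\in\CC_i$ into an equality of sign vectors and then read it off. Write $\AG=\{\HC_0,\HC_{\pm r_1},\dots,\HC_{\pm r_m},\HC_\infty\}$ and let $\EC\subseteq\ZM[\Sba]\setminus\{0\}$ be the reduced symmetric set with $\AG=\AG_\EC$; up to sign it consists of $\o_1=(1,0)$, of $\o_2=(0,1)$, and of the reduced generators of the lines $\HC_{\pm r_j}=\HC_{(\mp d_j r_j,\,d_j)}$ for $1\le j\le m$ (with $d_j>0$, $d_jr_j\in\ZM$). By the description of the $\AG$-facets as the non-empty fibres of $\sgn_\EC$, and of the chambers as those fibres over sign vectors having no zero component, $\Pos(\ph)\in\CC_i$ if and only if $\sgn_\EC(\Pos(\ph))$ equals the constant value of $\sgn_\EC$ on $\CC_i$. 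To compute the latter I would fix a rational $t$ with $r_i<t<r_{i+1}$ and set $\psi:\Sba\to\QM$, $\psi(\o_1)=1$, $\psi(\o_2)=t$; then $\Pos(\psi)=X_t^+$ by Lemma~\ref{traduction}(b), and $X_t^+\in\CC_i$ — this is exactly how $\CC_i$ sits in the figure: its preimage under $\Pos$ is the open cone in $\RM\o_1^*\oplus\RM\o_2^*$ between the rays of slopes $r_i$ and $r_{i+1}$ inside the region $\o_1^*>0$. Since $\sgn_\EC(\ph)=\sgn_\EC(\Pos(\ph))$ and likewise for $\psi$ (see~\ref{sgn}), the condition becomes: $\ph(\l)$ and $\psi(\l)$ have the same nonzero sign for every $\l\in\EC$.

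Next I would make these sign conditions explicit. For $\l=p\o_1+q\o_2$ one has $\ph(\l)=pa+qb$ and $\psi(\l)=p+qt$, so running over the finitely many $\l\in\EC$ the equality $\sgn_\EC(\ph)=\sgn_\EC(\psi)$ is equivalent to the conjunction of: (i) $a$ and $1$ have the same nonzero sign, i.e. $a>0$ (contribution of $\HC_\infty=\HC_{\o_1}$); (ii) $b$ and $t$ have the same nonzero sign, i.e. $b>0$ (contribution of $\HC_0=\HC_{\o_2}$); and (iii) for each $j\in\{1,\dots,m\}$, $b-r_ja$ has the same nonzero sign as $t-r_j$, and $b+r_ja$ the same nonzero sign as $t+r_j$ (contributions of $\HC_{r_j}$ and $\HC_{-r_j}$). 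Using $t>0$ and $r_j>0$ throughout, and setting $r_0=0$, $r_{m+1}=+\infty$: once (i) and (ii) hold, the part of (iii) coming from $\HC_{-r_j}$ reduces to the tautology $b+r_ja>0$, and the part coming from $\HC_{r_j}$ splits according to whether $j\le i$ (then $t>r_j$, so one needs $b>r_ja$) or $j\ge i+1$ (then $t<r_j$, so one needs $b<r_ja$).

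Finally I would carry out the elementary bookkeeping. If $a>0$ and $r_ia<b<r_{i+1}a$, then $b>0$ (since $b>r_ia\ge 0$ when $i\ge1$, and $b>r_0a=0$ when $i=0$); for $j\le i$ one has $b>r_ia\ge r_ja$; for $j\ge i+1$ one has $b<r_{i+1}a\le r_ja$; and $b+r_ja>0$. So (i)--(iii) hold and $\Pos(\ph)\in\CC_i$. Conversely, if $\Pos(\ph)\in\CC_i$ then (i)--(iii) hold: (i) gives $a>0$; the instance $j=i$ of (iii) — or, when $i=0$, condition (ii) — gives $b>r_ia$; and the instance $j=i+1$ of (iii) gives $b<r_{i+1}a$, which is vacuous when $i=m$ since $r_{m+1}=+\infty$. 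This yields both implications. The step I expect to be the main nuisance is precisely this last case analysis: keeping straight the boundary indices $i=0$ (where the lower bound comes from $\HC_0$ rather than from some $\HC_{r_j}$) and $i=m$ (where the upper bound disappears and only $\HC_{r_m}$ and $\HC_\infty$ remain active), and verifying that the ``negative'' hyperplanes $\HC_{-r_j}$ — present in $\AG$ but invisible in the statement — really impose nothing new.
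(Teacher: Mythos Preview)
Your proof is correct. The paper does not actually supply a proof of this lemma: it is stated immediately after the figure and is meant to be read off directly from the picture together with Lemma~\ref{traduction}. Your argument formalizes exactly that reading by using the sign-vector description $\sgn_\EC$ of the $\AG$-facets (\S 3.3), picking a rational test point $\psi$ in the relevant open cone, and then matching signs; the computation and the boundary cases $i=0$ and $i=m$ are handled correctly, and you are right that the hyperplanes $\HC_{-r_j}$ impose nothing new once $a>0$ and $b>0$. The only point where you still lean on the figure is in asserting that your test point $X_t^+$ lies in $\CC_i$; but since the labels $\CC_i$ are \emph{defined} by that figure, this is unavoidable and entirely legitimate.
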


\bigskip

Note also that, if $1 \le i \le m$, then $\CC_i$ and $\CC_{i+1}$ 
are the only chambers $\CC$ such that 
$\HC_{r_i}^+ \subseteq \overline{\CC}$, 
that $\CC_0$ and $\CC_0^{(2)}$ are the only chambers $\CC$ such that 
$\HC_0^+ \subseteq \overline{\CC}$ and that 
$\CC_m$ and $\CC_m^{(1)}$ are the only chambers $\CC$ such that 
$\HC_\infty^+ \subseteq \overline{\CC}$. 
All these descriptions (together with Lemma \ref{symetrie}) 
show that, whenever $|\Sba|=2$, Conjecture A is equivalent to the following 
one (the statements (a) and (b) are the same as the statements (a) and (b) 
in the Conjecture 0 of the introduction: the statements (c) and (d) 
gives an extra-information in the case where some parameters 
are equal to zero):

\bigskip

\noindent{\bf Conjecture A''.} 
{\it Assume that $|\Sba|=2$ and let $? \in \{L,R,LR\}$. 
Then there exist rational numbers $0 < r_1 < \cdots < r_m$ 
(depending only on $(W,S)$) such that (setting $r_0=0$ 
and $r_{m+1}=+\infty$), if $\G$ and $\G'$ are two totally 
ordered abelian groups and $\ph : \Sba \to \G$ and 
$\ph' : \Sba \to \G'$ are maps, then:
\begin{itemize}
\itemth{a} If $0 \le i \le m$ and if $\Pos(\ph)$ and $\Pos(\ph')$ 
are in $\CC_i$, then the relations $\sim_?^\ph$ and $\sim_?^{\ph'}$ 
coincide. We denote it by $\sim_?^{\CC_i}$. 

\itemth{b} If $1 \le i \le m$, then $\sim_?^{X_{r_i}^+}$ is generated 
by $\sim_?^{\CC_{i-1}}$ and $\sim_?^{\CC_i}$.

\itemth{c} The relation $\sim_?^{X_0^+}$ is generated by 
$\sim_?^{\CC_0}$ and (left, right or two-sided) translation 
by $W_{\o_2}$.

\itemth{d} The relation $\sim_?^{X_\infty^+}$ is generated by 
$\sim_?^{\CC_m}$ and (left, right or two-sided) translation 
by $W_{\o_1}$.
\end{itemize}}

\bigskip

\section{Conjectures about cell representations} 

\medskip

It seems reasonable to expect that the Conjecture A is compatible 
with the construction of cell representations. We shall make this 
more precise here.

\medskip

\begin{quotation}
\noindent{\bf Hypothesis and notation.} 
{\it In this section, and only in this section, we assume that 
$S$ is finite and that the Conjecture A holds for $(W,S)$. We denote 
by $\AG$ the set of corresponding essential hyperplanes.}
\end{quotation}

\medskip

Let $X \in \positif(\ZM[\Sba])$, let $? \in \{L,R,LR\}$ and let $C$ be a cell 
for $\sim_?^X$. Let $\CC$ be an $\AG$-chamber such that 
$X \in \overline{\CC}$ and let $Y \in \CC$. Since the Conjecture 
A holds, we have $C = \DS{\mathop{\dot{\bigcup}}_{i \in \IC}} C_i$, 
where the $C_i$'s are cells for $\sim_?^Y$. 

\bigskip

\noindent{\bf Conjecture B.} 
{\it There exist a natural number $d$ and a partition 
$\IC = \DS{\mathop{\dot{\bigcup}}_{1 \le k \le d}} \IC_k$ satisfying 
the following properties:
\begin{itemize}
\itemth{a} If $i \in \IC_k$ and $j \in \IC_l$ are such that 
$C_i \pre{?}{Y} C_j$, then $k \le l$.

\itemth{b} There exists a filtration 
$M_0=0 \subseteq M_1 \subseteq \cdots \subseteq M_d=\ZM M_C^X$ 
of the $\ZM W$-$?$-module $\ZM M_C^{?,X}$ such that 
$$M_k/M_{k-1} \simeq \mathop{\oplus}_{i \in \IC_k} \ZM M_{C_i}^{?,Y}.$$
\end{itemize}}

\bigskip

\noindent{\sc Remark - } In the above Conjecture, the ``new'' statement 
is the statement (b). Indeed, the partition of $\IC$ satisfying 
(a) can be obtained by taking the fibers of Lusztig's $\ab$-function, 
if we assume that Lusztig's Conjectures 
\cite[Conjecture 13.4, Conjecture 13.12 (a) and Conjectures 
{\bf P1}-{\bf P15} in Chapter 14]{lusztig} hold 
(indeed, in this case, the $\ab$-function would 
take only finitely many values).\finl

\bigskip

A much weaker version is given by:

\bigskip

\noindent{\bf Conjecture ${\text{\bfseries B}}^{\boldsymbol{-}}$.} 
{\it Assume that $W$ is finite. Then
$$\chi_C^{?,X}=\sum_{i \in \IC} \chi_{C_i}^{?,Y}.$$}

\bigskip

\section{Examples}

\bigskip

We shall illustrate here Conjectures A and B by several examples. 
Most of the computer calcutations that have lead to some of the results of this section 
were done by using M. Geck's programs (using the package {\tt chevie} 
of {\tt GAP3} \cite{chevie}): we thank him warmly for his help. 

\bigskip

\subsection{Finite dihedral groups} 
Assume in this subsection, and only in this subsection, that $S=\{s,t\}$ 
and that $st$ has finite even order $2m$ with $m \ge 2$. So 
we can identify $S$ and $\Sba$. 
Let $w_0=(st)^m=(ts)^m$ be the longest element $W$: it is central.
If $w \in W$, we set $\RC(w)=\{u \in S~|~wu < w\}$. Let
$$\L_s=\{w \in W~|~\RC(w)=\{s\}\}\quad\text{et}\quad
\L_t=\{w \in W~|~\RC(w)=\{t\}\}.$$
An easy computation \cite[\S 8.7]{lusztig} shows that the partition of $W$ 
into left cells for $(W,S,\ph)$ is given by the following table 
(whenever $\ph$ has non-negative values):

\bigskip

$$\begin{array}{|c|c|}
\hline
\espace \ph & \text{Left cells} \\
\hline
\hline
\espace 0=\ph(s)=\ph(t) & W \\
\espace 0=\ph(s) < \ph(t) & \{1,s\},~\L_s\setminus\{s\},~ 
\L_t \setminus\{sw_0\},~ \{sw_0,w_0\} \\
\espace 0 < \ph(s) < \ph(t) & \{1\},~ \{s\},~ \L_s \setminus\{s\},~ 
\L_t \setminus\{sw_0\},~
\{sw_0\},~ \{w_0\} \\
\espace 0 < \ph(s)=\ph(t) & \{1\},~ \L_s,~\L_t,~\{w_0\} \\
\espace 0 < \ph(t) < \ph(s) & \{1\},~ \{t\},~ 
\L_s\setminus\{tw_0\},~ \L_t\setminus\{t\},~ \{tw_0\},~ 
\{w_0\} \\
\espace 0=\ph(t) < \ph(s) & \{1,t\},~ 
\L_s\setminus\{tw_0\},~ \L_t\setminus\{t\},~ \{tw_0,w_0\}\\
\hline
\end{array}$$

\bigskip

\begin{prop}\label{diedral}
The Conjectures A and B hold if $|S|=2$ and $|W| < \infty$. The essential 
hyperplanes are $\HC_s$, $\HC_t$, $\HC_{s-t}$ and 
$\HC_{s+t}$. 
\end{prop}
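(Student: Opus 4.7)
The plan is to verify both conjectures directly against the explicit table of left cells above, with Proposition \ref{symetrie} reducing the analysis to the region where $\ph(s), \ph(t) \ge 0$. The symmetries $\t_s^*$ and $\t_t^*$ individually fix $\HC_s$ and $\HC_t$ and swap $\HC_{s-t}$ with $\HC_{s+t}$, so $\HC_{s+t}$ is forced into the essential arrangement purely by the symmetry requirement of Remark \ref{essentiel symetrique}. Restricted to the positive quadrant, the three hyperplanes $\HC_s$, $\HC_t$, $\HC_{s-t}$ partition $\positif(\ZM[\Sba])$ into seven facets, and each matches exactly one row of the table, so part (a) of Conjecture A is immediate by inspection.

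For Conjecture A(b) I would proceed facet by facet. On the edge $\ph(s) = 0 < \ph(t)$, where $W_\FC = \{1, s\}$, a direct check on the dihedral combinatorics shows that left multiplication by $s$ preserves the sets $\L_s \setminus \{s\}$ and $\L_t \setminus \{sw_0\}$ (the only elements that could escape them, namely $w_0$ and $sw_0$ respectively, are already excluded), while it forces the amalgamations $\{1\} \cup \{s\} = \{1, s\}$ and $\{w_0\} \cup \{sw_0\} = \{sw_0, w_0\}$, producing the four cells listed. The symmetric edge $\ph(t) = 0 < \ph(s)$ is analogous, and the origin facet has $W_\FC = W$ and trivially gives the single cell $W$. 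On the edge $\ph(s) = \ph(t) > 0$, where $W_\FC$ is trivial, I compute the supremum of the two chamber partitions: the singleton $\{s\}$ from the chamber $\ph(s) < \ph(t)$ merges with $\L_s \setminus \{tw_0\}$ from the chamber $\ph(s) > \ph(t)$, this class meets $\L_s \setminus \{s\}$ from the first chamber (their union being all of $\L_s$), and an analogous cascade on the $\L_t$ side yields precisely $\{1\}, \L_s, \L_t, \{w_0\}$. Essentiality then follows: each of $\HC_s$, $\HC_t$, $\HC_{s-t}$ exhibits a genuine change of partition in the table, and $\HC_{s+t}$ is forced by symmetry.

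For Conjecture B, when $C$ is already a chamber cell there is nothing to verify. On the axis edges where a parameter vanishes, Corollary \ref{induit zero} realizes $\ZM M_C^{L, X}$ as a module induced from the parabolic subgroup fixing the facet, whose decomposition into chamber cell modules is visible directly from the induction functor. On the edge $\HC_{s-t}$ with $C = \L_s$, one exhibits a two-step filtration of $\ZM M_{\L_s}^{L, X}$ whose successive quotients realize the chamber cell modules $\ZM M_{\{s\}}^{L, Y}$ and $\ZM M_{\L_s \setminus \{s\}}^{L, Y}$ for the chamber $Y$ with $\ph(s) < \ph(t)$ (and similarly from the other chamber); this filtration comes from the explicit structure constants of the Kazhdan--Lusztig basis on dihedral groups together with a compatible ordering of the chamber subcells.

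The main obstacle is the combinatorial bookkeeping on the edge $\HC_{s-t}$: one must carefully track how every element of $W$ sits in the two adjacent chamber partitions — in particular the boundary elements $s$, $t$, $sw_0$, $tw_0$, $w_0$ — and show that the transitive closure of the two partitions collapses $\L_s$ into a single class while keeping $\{1\}$ and $\{w_0\}$ isolated. The key identity $(\L_s \setminus \{s\}) \cap (\L_s \setminus \{tw_0\}) = \L_s \setminus \{s, tw_0\}$, with an analogue on the $\L_t$ side, is what drives the argument uniformly in $m$.
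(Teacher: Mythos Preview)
Your treatment of Conjecture A is essentially the paper's approach: both reduce via Proposition~\ref{symetrie} to the non-negative quadrant and then read off statements (a)--(d) of Conjecture A'' directly from the table, with $m=1$ and $r_1=1$. Your facet-by-facet verification of the supremum on $\HC_{s-t}$ is a bit more explicit than the paper's one-line ``easily checked by inspection'', but the content is the same.

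For Conjecture B there is a genuine gap. On the diagonal edge $\HC_{s-t}$ with $C=\Lambda_s$, the paper's argument rests on a specific identification you do not supply: writing $I$, $I'$, $I''$ for the left ideals of $\ZM W$ spanned by the images $G_w$, $G_w'$, $G_w''$ of the Kazhdan--Lusztig elements (for the three parameter choices) over $\{w : ws < w\}$, one has
\[
I = I' = I'' = \{h \in \ZM W \mid hs = h\},
\]
the last equality coming from Lusztig's Lemma 8.4 (which says $G_s = G_s' = G_s'' = s+1$ and that multiplication by $s+1$ characterises the ideal). It is this $\ph$-independent description that lets one place the filtrations coming from the two adjacent chambers \emph{inside the same} $\ZM W$-module $I/\ZM G_{w_0} \simeq \ZM M_{\Lambda_s}^{L,\ph}$, and then read off the quotients. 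Your appeal to ``explicit structure constants together with a compatible ordering'' does not produce this identification; without it you have three a priori different bases of three a priori different modules, and no mechanism to compare them.

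A smaller point: on the axis edges, Corollary~\ref{induit zero} expresses $\ZM M_{W_I\cdot C}^{L,\ph}$ as $\Ind_{\tilde W}^W \ZM M_C^{L,\tilde\ph}$, i.e.\ in terms of cell modules of $(\tilde W,\tilde J,\tilde\ph)$, not of $(W,S,\ph')$ for $\ph'$ in an adjacent chamber. You still need to identify those induced pieces with the chamber cell modules of $W$; this is not automatic from the induction functor alone and again uses the same kind of $\ph$-independent ideal description.
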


\begin{proof}
Let us first show Conjecture A. By the discussion of 
\S\ref{soussection 2}, it is sufficient to show Conjecture A''. 
For this, take $m=1$ and $r_1=1$: then the statements (a), (b), (c) and 
(d) are easily checked by inspection of the above table.

\medskip

Let us now show Conjecture B. There are finitely many cases to be 
considered: using the automorphism of the Coxeter graph of $(W,S)$, 
the number of cases can be drastically limited. We shall only consider 
the following one, the other ones being treated similarly. 
Let $\ph : \Sba \to \G$, $\ph' : \Sba \to \G'$ 
and $\ph'' : \Sba \to \G''$ be maps such that 
$$\ph(s)=\ph(t) > 0,\quad\ph'(s) > \ph'(t) > 0\quad
\text{and}\quad \ph''(t) > \ph''(s) > 0.$$
Here, $\G$, $\G'$ and $\G''$ are totally ordered abelian groups. 
We also denote by $\aug : \HC(W,S,\ph) \to \ZM W$, 
$\aug' : \HC(W,S,\ph') \to \ZM W$ and $\aug'' : \HC(W,S,\ph'') \to \ZM W$ 
the morphisms of rings induced respectively by the augmentation morphisms 
$\ZM[\G] \to \ZM$, $\ZM[\G'] \to \ZM$ and $\ZM[\G''] \to \ZM$. If $w \in W$, 
we set
$$G_w=\aug(C_w^\ph),\quad G_w'=\aug'(C_w^{\ph'})\quad\text{and}\quad
G_w''=\aug''(C_w^{\ph''}).$$
Let $C=\L_s$: it is a left cell for $(W,S,\ph)$ and,
% we have 
%$$C=(C\setminus\{tw_0\})\cup \{tw_0\} = \{s\} \cup (C \setminus\{s\}).$$
by the Table above (and easy computations), we have that:
\begin{itemize}
\itemth{\CC'}\quad $C\setminus\{tw_0\}$ and $\{tw_0\}$ are left cells for 
$(W,S,\ph')$ and $\{tw_0\} <_L^{\ph'} C\setminus\{tw_0\}$.

\itemth{\CC''}\quad $\{s\}$ and $C\setminus\{s\}$ are left cells for 
$(W,S,\ph'')$ and $C\setminus\{s\} <_L^{\ph''} \{s\}$.
\end{itemize}
Note that $G_{w_0}=G_{w_0}'=G_{w_0}''=\sum_{w \in W} w$. We set
$$I=\mathop{\oplus}_{\substack{w \in W\\ ws < w}} \ZM G_w,\quad
I'=\mathop{\oplus}_{\substack{w \in W\\ ws < w}} \ZM G_w'\quad \text{and}\quad 
I''=\mathop{\oplus}_{\substack{w \in W\\ ws < w}} \ZM G_w''.$$
Then $I$, $I'$ and $I''$ are left ideals of $\ZM W$ 
(by \cite[Lemma 8.6]{lusztig}) and 
$$I=I'=I''.$$
Indeed, this last equality can be proved by tedious computations 
using formulas given by Lusztig \cite[Propositions 7.3 and 7.6]{lusztig}, but 
it might also be proved by using \cite[Lemma 8.4]{lusztig}: this 
last lemma shows that, since $G_s=G_s'=G_s''=s+1$, 
$$I=I'=I''=\{h \in \ZM W~|~h (s+1)=2h\}=\{h \in \ZM W~|~hs=h\}.$$
Now, by definition,
$$\ZM M_C^{L,\ph} \simeq I/\ZM G_{w_0}.$$
Now, let 
$$M'=\ZM G_{tw_0}' \oplus \ZM G_{w_0}'\quad
\text{and}\quad M''=\mathop{\oplus}_{w \in (C \cup \{w_0\})\setminus \{s\}} 
\ZM G_w''.$$
Then 
$$M_0'=0 \subseteq M_1'=M'/\ZM G_{w_0} \subseteq 
M_2'=I'/\ZM G_{w_0}=\ZM M_C^{L,\ph}$$
$$M_0''=0 \subseteq M_1''=M''/\ZM G_{w_0} \subseteq M_2''= 
I''/\ZM G_{w_0}=\ZM M_C^{L,\ph}\leqno{\text{and}}$$
are two filtrations of $\ZM M_C^{L,\ph}$ by left $\ZM W$-modules. Moreover, 
by definition, 
\begin{itemize}
\item $M_1'/M_0' \simeq \ZM M_{\{tw_0\}}^{L,\ph'}$ and 
$M_2'/M_1' \simeq \ZM M_{C \setminus\{tw_0\}}^{L,\ph'}$.

\item $M_1''/M_0'' \simeq \ZM M_{C \setminus \{s\}}^{L,\ph''}$ and 
$M_2''/M_1'' \simeq \ZM M_{\{s\}}^{L,\ph''}$.
\end{itemize}
This shows Conjecture B in this particular case, taking into account 
the statements $(\CC')$ and $(\CC'')$ above. 
\end{proof}

\bigskip

\subsection{Infinite dihedral group}
The same argument as in the finite case (see Proposition \ref{diedral}) 
shows that the following proposition holds:

\bigskip

\begin{prop}\label{diedral infini}
If $S=\{s,t\}$ and $st$ has infinite order, then the Conjectures 
A and B hold. The essential hyperplanes of $(W,S)$ are $\HC_s$, 
$\HC_t$, $\HC_{s-t}$ and $\HC_{s+t}$. 
\end{prop}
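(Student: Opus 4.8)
The plan is to follow, \emph{mutatis mutandis}, the proof of Proposition \ref{diedral}, replacing throughout the cell tables of \cite[\S 8.7]{lusztig} for the finite dihedral groups by those of \cite[\S 8.8]{lusztig} for the infinite dihedral group. First I would use Corollary \ref{signe cellules} to reduce to weight functions $\ph : \Sba \to \G$ with non-negative values, and then Corollary \ref{KL semi-direct} to dispose of the cases $\ph(s) = 0$ and $\ph(t) = 0$: each of these reduces the cell partition to that of a smaller datum --- either the equal-parameter infinite dihedral group, or, when $\ph(s) = \ph(t) = 0$, the trivial case of Example \ref{s=1} in which $W$ is a single cell --- and this is exactly what will account for the two essential hyperplanes $\HC_s$ and $\HC_t$ and for statements (c), (d) of Conjecture A''. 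It then remains to treat the open region $\ph(s) > 0$, $\ph(t) > 0$, where by \cite[\S 8.8]{lusztig} the partition of $W$ into left cells depends only on the sign of $\ph(s) - \ph(t)$. Writing $\L_s$ and $\L_t$ for the sets of non-trivial elements whose right descent set is $\{s\}$, respectively $\{t\}$ (so that $W = \{1\} \dotcup \L_s \dotcup \L_t$), one passes from the partition $\{1\}$, $\L_s$, $\L_t$ valid when $\ph(s) = \ph(t)$ to the partitions valid on either side by splitting off a single element, namely the shortest element of $\L_t$ (if $\ph(s) > \ph(t)$) or of $\L_s$ (if $\ph(t) > \ph(s)$); the right cells then follow by \ref{siml simr} and the two-sided cells are $\{1\}$ and $W \setminus \{1\}$ throughout. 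From this table I would read off Conjecture A'' exactly as in Proposition \ref{diedral}, with $m = 1$ and $r_1 = 1$, and deduce that the essential elements of $(W,S)$ are $\pm s$, $\pm t$, $\pm (s-t)$ and $\pm (s+t)$, the last pair being obtained from $\pm(s-t)$ by the sign-change symmetry $\t_t$ (Remark \ref{essentiel symetrique}).

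For Conjecture B I would imitate the second half of the proof of Proposition \ref{diedral}. The only facets $X$ for which the comparison with an adjacent chamber is not automatic are those lying on $\HC_{s-t}$ (and, via $\t_t$, on $\HC_{s+t}$): across such a wall exactly one left cell $C$ splits, into an infinite piece and a singleton. As in Proposition \ref{diedral} I would realise $\ZM M_C^{L,\ph}$ as a left ideal $I$ of $\ZM W$ by means of \cite[Lemma 8.6]{lusztig}, and then identify $I$, computed for the two specialisations, with one and the same explicit left ideal using \cite[Lemma 8.4]{lusztig} (again because $\aug(C_t^\ph) = t+1$, respectively $\aug(C_s^\ph) = s+1$, does not depend on the chamber); filtering this ideal by the order $<_L^{\ph'}$ on its two constituent cells then gives the two-step filtration of $\ZM M_C^{L,\ph}$ with the subquotients required by Conjecture B. The cases $? = R$ and $? = LR$ reduce to this one by \ref{siml simr}. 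The remaining facets --- those contained in $\HC_s$ or $\HC_t$, and the origin --- I would handle by Corollaries \ref{KL semi-direct} and \ref{induit zero}, Example \ref{s=1}, and (for the origin) the standard filtration of the regular module $\ZM W$ by cell modules. Conjecture $\mathrm{B}^-$ is vacuous here, since $W$ is infinite.

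The main obstacle I anticipate is not conceptual but a matter of bookkeeping in infinite rank: the cell modules are now infinitely generated, and there is no longest element $w_0$ to quotient by --- for instance, nothing is strictly below the cell $\L_s$ for $<_L^\ph$ on the region $\ph(s), \ph(t) > 0$, so that $\ZM M_{\L_s}^{L,\ph}$ is realised as the ideal $I$ itself rather than as a proper quotient of it. One has to check that the finitely supported left ideals of $\ZM W$ used above, and the identities of \cite[Lemmas 8.4 and 8.6]{lusztig}, remain valid verbatim for the infinite dihedral group. Granting this, every isomorphism of subquotients should follow formally from the manipulations of Proposition \ref{semi-direct} and its corollaries, exactly as in the finite case.
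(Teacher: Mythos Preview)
Your proposal is correct and follows precisely the approach the paper indicates: the paper's own proof consists of the single sentence ``The same argument as in the finite case (see Proposition \ref{diedral}) shows that the following proposition holds'', and what you have written is exactly a detailed execution of that transfer, including the replacement of \cite[\S 8.7]{lusztig} by \cite[\S 8.8]{lusztig} and the observation that the absence of $w_0$ simplifies the ideal picture rather than complicating it.
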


\bigskip

For general $(W,S)$, the Remark \ref{parabolique essentiel} 
together with the Propositions \ref{diedral} and \ref{diedral infini} 
shows immediately that:

\bigskip

\begin{prop}\label{s-t}
Assume that Conjecture A holds for $(W,S)$. 
Let $s$ and $t$ be two elements of $S$ such that $\sba \neq \tba$ 
and such that the order of $st$ is greater than $3$. Then 
$\HC_{\sba-\tba}$ and $\HC_{\sba+\tba}$ are essential hyperplanes 
of $(W,S)$. Equivalently, $\sba-\tba$ and $\sba+\tba$ are essential 
elements of $(W,S)$. 
\end{prop}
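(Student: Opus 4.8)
The plan is to reduce to the rank-two parabolic subsystem generated by $I=\{s,t\}$ and then combine Remark \ref{parabolique essentiel} with Propositions \ref{diedral} and \ref{diedral infini}.

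First I would identify the dihedral Coxeter system $(W_I,I)$, where $W_I=\langle s,t\rangle$. Since $\sba\neq\tba$, the reflections $s$ and $t$ are not conjugate in $W$, hence not conjugate in $W_I$. In a finite dihedral group generated by two reflections whose product has order $m$, these two reflections are conjugate precisely when $m$ is odd; therefore the order of $st$ cannot be an odd integer. Together with the hypothesis that this order is $>3$, it follows that $st$ has finite even order $\ge 4$, or infinite order. In particular $s$ and $t$ remain non-conjugate in $W_I$, so $\Iba=I/\sim_I$ has exactly two elements (the classes of $s$ and $t$), $\ZM[\Iba]$ is free of rank two on those classes, and the map $\g_I:\ZM[\Iba]\to\ZM[\Sba]$ of Remark \ref{parabolique essentiel} is the injection sending those classes to $\sba$ and $\tba$. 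Hence $\g_I(s-t)=\sba-\tba$ and $\g_I(s+t)=\sba+\tba$.

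Next, by Proposition \ref{diedral} (when $st$ has finite even order $\ge 4$) or Proposition \ref{diedral infini} (when $st$ has infinite order), Conjecture A holds for $(W_I,I)$ and its essential hyperplanes are $\HC_s$, $\HC_t$, $\HC_{s-t}$ and $\HC_{s+t}$; equivalently, since $s-t$ and $s+t$ are reduced elements of $\ZM[\Iba]$, the set $\EC_I$ of essential elements of $(W_I,I)$ contains $s-t$ and $s+t$. Since Conjecture A is assumed for $(W,S)$ and has just been established for $(W_I,I)$, Remark \ref{parabolique essentiel} applies and gives $\g_I(\EC_I)\subseteq\EC\cup\{0\}$, where $\EC$ is the set of essential elements of $(W,S)$. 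Thus $\sba-\tba$ and $\sba+\tba$ both lie in $\EC\cup\{0\}$; since $\sba$ and $\tba$ are distinct basis vectors of $\ZM[\Sba]$, both elements are nonzero, and they are reduced (the quotients $\ZM[\Sba]/\ZM(\sba-\tba)$ and $\ZM[\Sba]/\ZM(\sba+\tba)$ are torsion-free), so they belong to $\EC$ and $\HC_{\sba-\tba}$, $\HC_{\sba+\tba}$ are essential hyperplanes of $(W,S)$.

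Once the rank-two cases (Propositions \ref{diedral} and \ref{diedral infini}) and the inheritance statement (Remark \ref{parabolique essentiel}) are granted, this is essentially bookkeeping, so I do not expect a serious obstacle; the one point needing care is the conjugacy step in the second paragraph — one must rule out the odd dihedral groups (where $s$ and $t$ fuse, contradicting $\sba\neq\tba$) as well as the case where $st$ has order at most $3$, in order to land precisely within the scope of Propositions \ref{diedral} and \ref{diedral infini}.
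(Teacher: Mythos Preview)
Your proof is correct and follows precisely the approach indicated in the paper, which simply states that the proposition follows from Remark \ref{parabolique essentiel} together with Propositions \ref{diedral} and \ref{diedral infini}. You have supplied the details the paper omits, in particular the verification that the hypotheses $\sba\neq\tba$ and $\mathrm{ord}(st)>3$ force the order of $st$ to be finite even $\ge 4$ or infinite, placing $(W_I,I)$ within the scope of those two propositions.
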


\bigskip

\subsection{Type ${\boldsymbol{F_4}}$}
Assume in this subsection, and only in this subsection, 
that $(W,S)$ is of type $F_4$. Let $S=\{s_1,s_2,t_1,t_2\}$ be 
indexed in such a way that the Coxeter graph of $(W,S)$ is
\begin{center}
\begin{picture}(160,28)
\put(0,10){\circle{10}}\put(-5,20){${s_2}$}
\put(50,10){\circle{10}}\put(45,20){${s_1}$}
\put(100,10){\circle{10}}\put(95,20){${t_1}$}
\put(150,10){\circle{10}}\put(145,20){${t_2}$}
\put(5,10){\line(1,0){40}}
\put(54,13){\line(1,0){42}}
\put(54,7){\line(1,0){42}}
\put(105,10){\line(1,0){40}}
\end{picture}
\end{center}
Let $s=\{s_1,s_2\}$ and $t=\{t_1,t_2\}$, so that $\Sba=\{s,t\}$. 

\bigskip

\begin{theo}[Geck]
If $(W,S)$ is of type $F_4$, then the Conjectures A and ${\text{B}}^-$ hold 
for $(W,S)$. With the above notation, the essential hyperplanes of 
$(W,S)$ are $\HC_s$, $\HC_t$, $\HC_{s-2t}$, $\HC_{s-t}$, $\HC_{2s-t}$, 
$\HC_{s+2t}$, $\HC_{s+t}$ and $\HC_{2s+t}$.
\end{theo}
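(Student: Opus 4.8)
The plan is to reduce the statement to the concrete reformulation available when $|\Sba|=2$ and then to feed in Geck's explicit determination of the Kazhdan--Lusztig cells of type $F_4$ with unequal parameters \cite{geck f4}. Since $\Sba=\{s,t\}$ has exactly two elements, the discussion of \S\ref{soussection 2} applies and Conjecture A is equivalent to Conjecture A''. Using Lemma \ref{traduction}, the eight hyperplanes of the statement translate into the notation $\HC_r$ of \S\ref{soussection 2} as $\HC_s=\HC_\infty$, $\HC_t=\HC_0$, $\HC_{s-2t}=\HC_{1/2}$, $\HC_{s-t}=\HC_1$, $\HC_{2s-t}=\HC_2$, together with their images under $\t_1$, namely $\HC_{s+2t}=\HC_{-1/2}$, $\HC_{s+t}=\HC_{-1}$ and $\HC_{2s+t}=\HC_{-2}$. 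Thus the corresponding arrangement is of the form displayed in \S\ref{soussection 2} with $m=3$, $r_1=1/2$, $r_2=1$, $r_3=2$, and it suffices to check statements (a)--(d) of Conjecture A'' for these values.

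First I would establish (a). By Lemma \ref{traduction bis}, the four chambers $\CC_0,\dots,\CC_3$ correspond to $b/a$ lying in the intervals $(0,1/2)$, $(1/2,1)$, $(1,2)$, $(2,+\infty)$ (with $a>0$). That the partitions of $W$ into left, right and two-sided cells are each constant on each of these intervals, and that on each wall $\HC_{r_i}^+$ each is the join of the corresponding partitions on the two adjacent chambers, is exactly what Geck proves in \cite[Corollaire~4.8]{geck f4} (the left and right cases being exchanged by \ref{siml simr}); this is the computational heart of the argument, and I would simply reorganize that result into the shape of Conjecture A''(a),(b). Statements (c) and (d), which concern $a=0$ (i.e. $\ph(s)=0$) and $b=0$ (i.e. $\ph(t)=0$), follow instead from Corollary \ref{KL semi-direct}: when $\ph(s)=0$ the cells are the translates under $W_{\o_1}=\langle s_1,s_2\rangle$ of the cells of the reduced system $(\Wti,\Jti,\pht)$, and comparing with the description of the chamber $\CC_3$ adjacent to $\HC_\infty^+$ shows that $\sim_?^{X_\infty^+}$ is generated by $\sim_?^{\CC_3}$ and translation by $W_{\o_1}$; the case $\ph(t)=0$ is symmetric.

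Next I would check that this arrangement is minimal, i.e. that all eight hyperplanes are essential. The hyperplanes $\HC_s$ and $\HC_t$ lie in every complete arrangement by definition. Since in type $F_4$ the product $s_1t_1$ has order $4>3$, Proposition \ref{s-t} shows that $\HC_{s-t}$ and $\HC_{s+t}$ are essential. For $\HC_{s-2t}$ and $\HC_{2s-t}$ one reads off from Geck's tables that the partition into cells genuinely differs between $\CC_0$ and $\CC_1$, and between $\CC_2$ and $\CC_3$, so neither wall may be dropped; by Remark \ref{essentiel symetrique} the set of essential hyperplanes is stable under $\t_1$ and $\t_2$, so $\HC_{s+2t}$ and $\HC_{2s+t}$ are essential as well. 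This identifies the minimal complete arrangement with exactly the eight listed hyperplanes.

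It remains to establish Conjecture ${\text{B}}^-$. As $W$ is finite, all cell representations have characters over $\QM$. Fixing a wall $X$ and a generic weight function $Y$ in an adjacent chamber, Conjecture A'' describes how each cell $C$ for $\sim_?^X$ decomposes into cells $C_i$ for $\sim_?^Y$, and the required identity $\chi_C^{?,X}=\sum_i\chi_{C_i}^{?,Y}$ is then read off from Geck's determination of the cell modules in each unequal-parameter case; for the walls $\HC_s$ and $\HC_t$ one uses Corollaries \ref{KL semi-direct} and \ref{induit zero}, together with \ref{module signe}, to reduce to an induced (and possibly sign-twisted) situation. The main obstacle throughout is that the proof is not self-contained: it leans entirely on the explicit, computer-assisted classification of the cells and cell modules of $F_4$ for all choices of unequal parameters, the only genuinely new ingredient being the dictionary of \S\ref{soussection 2} that recasts that classification in the language of Conjectures A and ${\text{B}}^-$.
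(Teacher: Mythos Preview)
Your proposal is correct and follows essentially the same route as the paper: reduce to Conjecture~A'' via \S\ref{soussection 2} with $m=3$, $r_1=1/2$, $r_2=1$, $r_3=2$, import Geck's classification \cite{geck f4} for the strictly positive parameters, and handle the zero-parameter walls via Corollary~\ref{KL semi-direct}. The paper adds two small refinements you could mention: for the cases $\ph(s)=0$ or $\ph(t)=0$ it identifies the reduced system explicitly via the decomposition $W=\SG_3 \ltimes W(D_4)$ (so the required input is the equal-parameter cell structure of $D_4$, again obtained by {\tt GAP3}), and it invokes the diagram automorphism $s_i \leftrightarrow t_i$ to halve the number of cases to be checked.
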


\bigskip

\noindent{\sc Remark - } With the notation of \S\ref{soussection 2}, 
this is equivalent to say that Conjecture A'' holds and that 
we can take $m=3$, $r_1=1/2$, $r_2=1$ and $r_3=2$.\finl

%\noindent{\sc Remarque - } Tels quels, les r\'esultats de Geck \cite{geck f4} 
%ne d\'emontrent pas compl\`etement les conjectures A et $\Brm^-$ en type $F_4$.
%Ses r\'esultats ne sont pas non plus \'enonc\'es sous cette forme. 
%N\'eanmoins, comme le lecteur pourra le v\'erifier dans le sch\'ema 
%de la preuve que nous donnons ci-dessous, Geck avait fait l'essentiel du %travail: il a d\'etermin\'e les hyperplans essentiels. 
%Nous avons de plus utilis\'e ses programmes {\tt GAP/CHEVIE} pour terminer 
%le calcul lorsque certains param\`etres sont nuls.\finl
%
\bigskip

\begin{proof}
In \cite{geck f4}, M. Geck has computed the Kazhdan-Lusztig cells $W$ for all 
choices of map $\ph$ so that $\ph(S) \subset \G_{>0}$: this involves 
both theoretical and computational (using {\tt GAP3}) arguments. 
To get the cells whenever $\ph(S) \subset \G_{\geqslant 0}$, 
it is then sufficient to use Corollary \ref{KL semi-direct} 
(and the decomposition $W=\SG_3 \ltimes W(D_4)$, see \ref{dyer bis}) 
together with the knowledge of the cells in type $D_4$ (which 
can be obtained again by using {\tt GAP3}). Using these results, 
one can easily check Conjecture A'' (so that Conjecture A holds). 
Conjecture $\Brm^-$ is then also checked by computations using {\tt GAP3}. 
Note that all these computations are simplified thanks to the involutive 
automorphism of $(W,S)$ that sends $s_i$ to $t_i$.
\end{proof}

\bigskip

\subsection{Type ${\boldsymbol{B}}$\label{section B}}
Assume in this subsection, and only in this subsection, that 
$(W,S)$ is the pair 
$(W_n,S_n)$, where $W_n$ is of type $B_n$ ($n \ge 2$), that  $S_n=\{t,s_1,s_2,\dots,s_{n-1}\}$, and that the Coxeter graph of 
$(W_n,S_n)$ is
\begin{center}
\begin{picture}(170,40)
\put(10,10){\circle{10}}\put(8,19){$t$}
\put(50,10){\circle{10}}\put(45,19){$s_1$}
\put(90,10){\circle{10}}\put(85,19){$s_2$}
\put(160,10){\circle{10}}\put(150,19){$s_{n-1}$}
\put(13.9,13){\line(1,0){32.3}}
\put(13.7,7){\line(1,0){32.5}}
\put(55,10){\line(1,0){30}}
\put(95,10){\line(1,0){20}}
\put(155,10){\line(-1,0){20}}
\put(118,9.4){$\dots$}
\end{picture}
\end{center}
Identify $t$ and $\{t\}$ and let $s=\{s_1,s_2,\dots,s_{n-1}\}$. 
So $\Sba=\{s,t\}$. In this case, the Conjectures of Geck, Iancu, 
Lam and the author \cite[Conjectures A et B]{bgil} suggest the following 
one:

\bigskip

\noindent{\bf Conjecture C.} 
{\it The Conjectures A and B are true for $(W_n,S_n)$. The essential hyperplanes are $\HC_s$, $\HC_t$, $\HC_{t-is}$ ($1 \le i \le n-1$) and 
$\HC_{t+is}$ ($1 \le i \le n-1$).}

\bigskip

\noindent{\sc Comment - } The reader may refer to the papers 
\cite{lacriced}, \cite{bonnafe two}, \cite{bgil}, \cite{bonnafe B} et \cite{bonnafe guilhot} (in chronological order) for results, 
comments, evidences for this conjecture. Note also the work of 
Pietraho \cite{pietraho} in relation with Conjecture B.\finl

\bigskip

%The next Theorem will be proved in Part II:
%
%\begin{theo}\label{bn}
%If Lusztig's Conjectures 
%\cite[Conjectures {\bfseries P1}-{\bfseries P15}]{lusztig} 
%hold for all maps $\ph : \Sba_n \to \G$, then 
%the Conjectures \cite[Conjectures A and B]{bgil} hold.
%\end{theo}
%
%\bigskip

\subsection{Type ${\boldsymbol{\widetilde{G}_2}}$}
Assume in this subsection, and only in this subsection, that 
$S=\{t,s_1,s_2\}$, that $(W,S)$ is a Coxeter system of type 
$\widetilde{G}_2$, and that the Coxeter graph of $(W,S)$ is given by:
\begin{center}
\begin{picture}(100,30)
\put(10,10){\circle{10}}\put(8,19){$t$}
\put(50,10){\circle{10}}\put(46,19){$s_1$}
\put(90,10){\circle{10}}\put(86,19){$s_2$}
\put(55,10){\line(1,0){30}}
\put(15,10){\line(1,0){30}}
\put(14,13){\line(1,0){32}}
\put(14,7){\line(1,0){32}}
\end{picture}
\end{center}

\noindent Let $s=\{s_1,s_2\}$ and let us identify $t$ and $\{t\}$, so that $\Sba=\{s,t\}$. 
J. Guilhot proved that Conjecture A holds in this case (see \cite{guilhot} and \cite{guilhot2}):

\bigskip

\begin{theo}[Guilhot]
If $(W,S)$ is an affine Weyl group of type $\widetilde{G}_2$, then 
the Conjecture A holds. With the above notation, the essential 
hyperplanes are $\HC_s$, $\HC_t$, $\HC_{s-t}$, $\HC_{s+t}$, 
$\HC_{2t-3s}$, $\HC_{2t+3s}$, $\HC_{t-2s}$ and $\HC_{t+2s}$.
\end{theo}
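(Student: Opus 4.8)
The plan is to pass from Conjecture~A to its concrete reformulation Conjecture~A'' — legitimate here because $|\Sba|=2$, by the discussion of \S\ref{soussection 2} — and then to verify A'' with $m=3$, $r_1=1$, $r_2=3/2$, $r_3=2$. The first bookkeeping step is the dictionary between the eight hyperplanes in the statement and these three ratios: under the identification $\ZM[\Sba]=\ZM^2$ and the notation $\ph(s)=a$, $\ph(t)=b$ of \S\ref{soussection 2} (so that $\o_1=s$, $\o_2=t$), the hyperplanes $\HC_s$ and $\HC_t$ are the coordinate lines $a=0$ and $b=0$, while $\HC_{s\mp t}$, $\HC_{2t\mp 3s}$ and $\HC_{t\mp 2s}$ are the lines $b=\pm a$, $b=\pm 3a/2$ and $b=\pm 2a$. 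Hence the positive breakpoints are $r_1=1<r_2=3/2<r_3=2$; the chambers $\CC_0,\CC_1,\CC_2,\CC_3$ correspond (for $a>0$) to $0<b/a<1$, $1<b/a<3/2$, $3/2<b/a<2$ and $b/a>2$; and the walls $X_{r_i}^+$ correspond to $b/a=r_i$.

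The substance of the theorem is Guilhot's explicit determination of the Kazhdan--Lusztig cells of the affine Weyl group of type $\widetilde{G}_2$ for every weight function $\ph$ with $\ph(S)\subseteq\G_{>0}$, carried out in \cite{guilhot} and \cite{guilhot2}. Granting that, the partitions into left, right and two-sided cells depend only on $\Pos(\ph)$, hence — when $a,b>0$ — only on $b/a$, and I would extract from Guilhot's tables the following three facts: each such partition is constant on each of the four open intervals $(r_i,r_{i+1})$, which yields the chamber relations $\sim_?^{\CC_i}$ of Conjecture~A''(a); the partitions attached to $\CC_{i-1}$ and $\CC_i$ are genuinely different for each $i=1,2,3$, so that none of the three positive hyperplanes can be dropped; and at each breakpoint $r_i$ the cell partition is the common refinement of the two neighbouring ones, i.e. $\sim_?^{X_{r_i}^+}$ is the equivalence relation generated by $\sim_?^{\CC_{i-1}}$ and $\sim_?^{\CC_i}$ — this is Conjecture~A''(b). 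The remaining five hyperplanes are then forced: $\HC_s$ and $\HC_t$ by completeness, and $\HC_{-1}$, $\HC_{-3/2}$, $\HC_{-2}$ by the symmetry of Remark~\ref{essentiel symetrique}.

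Next I would treat the two boundary cases $b=0$ and $a=0$, i.e. statements~(c) and~(d) of Conjecture~A'', using Corollary~\ref{KL semi-direct}. When $\ph(t)=0$, take $I=\{t\}$: then $\pht$ is constant on $\Jti$, so $(\Wti,\Jti,\pht)$ is an equal-parameter system, its cells are the usual Kazhdan--Lusztig cells of $\Wti$, and by Corollary~\ref{KL semi-direct} the cells for $(W,S,\ph)$ on the wall $X_0^+$ are exactly their $W_{\o_2}$-orbits; comparing with Guilhot's description of the cells for $b/a$ small and positive gives $\sim_?^{X_0^+}=\sup\bigl(\trans_?^{W_{\o_2}},\sim_?^{\CC_0}\bigr)$, which is A''(c). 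Symmetrically, when $\ph(s)=0$, take $I=\{s_1,s_2\}$: here $W_{\o_1}=W_s=\langle s_1,s_2\rangle\cong\SG_3$ is finite, $(\Wti,\Jti,\pht)$ is again an equal-parameter system, and the same comparison (now with $b/a\to+\infty$) yields A''(d). Finally, the totally degenerate case $\ph=0$ is trivial, the unique cell being $W$ itself.

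The hard part is entirely concentrated in the middle step: describing the Kazhdan--Lusztig cells of an infinite (affine) Coxeter group for \emph{all} unequal parameters is a substantial geometric--combinatorial problem — one must control the alcove geometry, identify the lowest two-sided cell, and follow the behaviour of Lusztig's $\ab$-function across each wall — and this is precisely what is achieved in \cite{guilhot} and \cite{guilhot2}. Once those descriptions are available, the verification of Conjecture~A'', and hence of Conjecture~A with the asserted list of essential hyperplanes, reduces to the finite and routine checks indicated above.
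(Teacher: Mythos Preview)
Your approach is correct and mirrors exactly the template the paper uses for the analogous $F_4$ case (the paper itself gives no proof for $\widetilde{G}_2$, merely citing Guilhot): reduce Conjecture~A to Conjecture~A'' via \S\ref{soussection 2}, read off (a) and (b) from Guilhot's explicit cell computations for strictly positive parameters, and handle the degenerate walls (c), (d) via Corollary~\ref{KL semi-direct}. One terminological slip: at the breakpoints $r_i$ the wall partition is the coarsest common \emph{coarsening} of the two neighbouring ones (the join in $\relations(W)$), not their ``common refinement''; your formal restatement ``the equivalence relation generated by $\sim_?^{\CC_{i-1}}$ and $\sim_?^{\CC_i}$'' is the right one.
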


\bigskip

\subsection{Type ${\boldsymbol{\widetilde{B}_2}}$}
Assume in this subsection, and only in this subsection, that 
$S=\{t,t',s\}$, that $(W,S)$ is a Coxeter system of type 
$\widetilde{B}_2$, and that the Coxeter graph of $(W,S)$ is given by:
\begin{center}
\begin{picture}(100,30)
\put(10,10){\circle{10}}\put(8,19){$t$}
\put(50,10){\circle{10}}\put(46,19){$s$}
\put(90,10){\circle{10}}\put(86,19){$t'$}
\put(54,13){\line(1,0){32}}
\put(54,7){\line(1,0){32}}
\put(14,13){\line(1,0){32}}
\put(14,7){\line(1,0){32}}
\end{picture}
\end{center}

\noindent 
Let us identify $t$, $t'$ and $s$ with $\{t\}$, $\{t'\}$ and $\{s\}$ respectively, 
so that $\Sba=\{s,t,t'\}$. 
J. Guilhot proved that Conjecture A holds in this case (see \cite{guilhot} and \cite{guilhot2}):

\bigskip

\begin{theo}[Guilhot]
If $(W,S)$ is an affine Weyl group of type $\widetilde{B}_2$, then 
the Conjecture A holds. With the above notation, the essential 
hyperplanes are $\HC_s$, $\HC_t$, $\HC_{t'}$, $\HC_{s-t}$, $\HC_{s+t}$, 
$\HC_{s-t'}$, $\HC_{s+t'}$, $\HC_{t-t'}$, $\HC_{t+t'}$, $\HC_{s+t+t'}$, 
$\HC_{s+t-t'}$, $\HC_{s-t+t'}$, $\HC_{s-t-t'}$, $\HC_{2s+t+t'}$, 
$\HC_{2s+t-t'}$, $\HC_{2s-t+t'}$ and $\HC_{2s-t-t'}$.
\end{theo}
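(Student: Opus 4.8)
The plan is to deduce the statement from Guilhot's explicit determination \cite{guilhot,guilhot2} of the Kazhdan--Lusztig cells of $(W,S)$ for \emph{every} weight function, organized around the reductions of Section~2. First I would cut down the parameter space. By Corollary~\ref{signe cellules} the partition into $?$-cells is unchanged when $\ph$ is replaced by the function obtained by flipping its sign on a union of conjugacy classes of simple reflections, and by Proposition~\ref{symetrie} this corresponds to the homeomorphisms $\t_s^*,\t_t^*,\t_{t'}^*$ of $\positif(\ZM[\Sba])$; hence it suffices to analyse $X=\Pos(\ph)$ inside $\overline{\CC}$ for the chambers $\CC$ meeting $\{\ph(s),\ph(t),\ph(t')\ge 0\}$ and then propagate by these three symmetries --- this is why the arrangement in the statement is $\t_\o$-stable (\emph{cf.} Remark~\ref{essentiel symetrique}). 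Whenever $\ph$ vanishes on one of $s,t,t'$, Corollary~\ref{KL semi-direct} expresses the cells of $(W,S,\ph)$ as translates, under the corresponding rank-one parabolic subgroup, of the cells of a smaller Coxeter system $(\Wti,\Jti,\pht)$; these boundary identifications are what make $\HC_s,\HC_t,\HC_{t'}$ appear (they are in any case forced into a \emph{complete} arrangement) and are under control once the strictly positive case is settled. So the real content is: for $\ph:\Sba\to\G_{>0}$, describe the cell partition and show that it is constant exactly on the facets of the stated arrangement.

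Second I would locate the ``elementary'' interior walls. The rank-two standard parabolics of $\widetilde{B}_2$ are the two dihedral groups of type $B_2$ generated by $\{s,t\}$ and by $\{s,t'\}$, together with the group of type $A_1\times A_1$ generated by $\{t,t'\}$. Once the cell partitions are known, Remark~\ref{parabolique essentiel} combined with Proposition~\ref{diedral} (or Proposition~\ref{s-t}) shows that $\HC_{s-t},\HC_{s+t}$ and $\HC_{s-t'},\HC_{s+t'}$ must be essential, while the parabolic $\{t,t'\}$ contributes nothing new since every weight function gives the same cells on $A_1\times A_1$. The remaining candidates $\HC_{t\pm t'}$, $\HC_{s\pm t\pm t'}$ and $\HC_{2s\pm t\pm t'}$ are genuinely affine: they are invisible to the parabolic argument and must be produced by the computation in $W$ itself.

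Third --- the heart of the matter --- I would carry out the geometric analysis in the affine setting, following \cite{guilhot,guilhot2}. One realizes $\widetilde{B}_2$ as an affine Weyl group acting on a Euclidean plane, identifies $W$ with the set of alcoves, and uses the geometric description of the preorders $\pre{?}{\ph}$ in terms of the affine hyperplane arrangement: the lowest two-sided cell (whose description for unequal parameters goes back to work of Bremke and Xi) together with generalized-induction techniques propagate cell information from the finite and affine standard parabolics to all of $W$, the few remaining configurations being handled by explicit computation (one may also invoke Lusztig's conjectures, which are theorems in this rank). Since $W$ has rank three, projectivizing the positive octant leaves a triangle of parameters subdivided by the traces of the candidate hyperplanes into finitely many open chambers; for each chamber one writes down the cell partition, checks that it is constant there, and checks that crossing any wall of the list --- and no other locus --- changes it. This determines the generic partitions $\sim_?^\CC$ and, together with the analogous check on the lower-dimensional facets, establishes part~(a) of Conjecture~A.

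Finally I would verify part~(b) and the minimality of the arrangement. For each facet $\FC$ one must check that the cells for $\sim_?^\FC$ are precisely the minimal subsets of $W$ that are unions of $\sim_?^\CC$-cells for every chamber $\CC$ with $\FC\subseteq\overline{\CC}$ and are stable under $W_\FC$-translation --- equivalently, by Remark~\ref{supremum}, that $\sim_?^\FC=\sup\bigl(\trans_?^{W_\FC},\sup_{\FC\subseteq\overline{\CC}}\sim_?^\CC\bigr)$. For a codimension-one wall this is a finite comparison of the two adjacent generic partitions together with the (possibly trivial) translation relation; the lower-dimensional facets are then handled inductively. That the seventeen listed hyperplanes are \emph{exactly} the essential ones follows from Proposition~\ref{essentiellement}: for each of them one exhibits two facets on which $\sim_?$ differs, so it cannot be removed. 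The main obstacle is the third step --- the exhaustive description of cells over all strictly positive parameters for this rank-three affine group --- together with the bookkeeping in the last step, namely being sure that the subdivision is complete (no essential hyperplane is missed) and that the delicate ``minimal subset'' characterization in (b) survives at every codimension, in particular at the zero-dimensional facets, where $W_\FC$ is largest.
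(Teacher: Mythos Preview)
The paper does not supply a proof of this theorem: it simply records Guilhot's result and cites \cite{guilhot} and \cite{guilhot2} (and, in the introduction, \cite{guilhot rang 2}). So there is no ``paper's own proof'' to compare against beyond the attribution.

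Your outline is a faithful reconstruction of Guilhot's strategy and is consistent with the framework of the paper: the reduction to $\ph(\Sba)\subset\G_{>0}$ via Corollary~\ref{signe cellules}/Proposition~\ref{symetrie}, the treatment of the walls $\HC_s,\HC_t,\HC_{t'}$ through Corollary~\ref{KL semi-direct}, the identification of the dihedral contributions $\HC_{s\pm t},\HC_{s\pm t'}$ via Remark~\ref{parabolique essentiel} and Proposition~\ref{diedral}, and then the alcove-geometric computation in the positive cone (lowest two-sided cell plus generalized induction, as in \cite{guilhot}) to produce the remaining ten hyperplanes and verify parts (a) and (b) of Conjecture~A facet by facet. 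Your observation that the parabolic $\langle t,t'\rangle$ of type $A_1\times A_1$ does \emph{not} force $\HC_{t\pm t'}$, so that these must emerge from the full affine analysis, is correct and worth emphasizing.

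One small caution: the parenthetical ``one may also invoke Lusztig's conjectures, which are theorems in this rank'' overstates what is known --- the full list {\bf P1}--{\bf P15} is not established for affine type $\widetilde{B}_2$ with arbitrary unequal parameters, and Guilhot's argument does not rely on them. Drop that clause, or make it conditional. Otherwise the sketch is sound; as you say yourself, the substance is entirely in the third step, and there one is really summarizing the content of \cite{guilhot rang 2} rather than reproving it.
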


\bigskip

This is the only case where $|\Sba|=3$ and where the decomposition into cells have 
been obtained for all choices of parameters.

\bigskip

\subsection{General results about finite Coxeter groups}
Though it is stated in a different context, 
the next result is proved by Geck 
in \cite[\S 3]{geck f4}.

\bigskip

\begin{prop}[Geck]\label{finitude}
If $W$ is finite, then the Conjecture A (a) holds. 
\end{prop}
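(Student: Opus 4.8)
The plan is to prove the equivalent reformulation \textbf{Conjecture A$'$(a)} for $W$ finite: there is a \emph{finite} complete set $\EC\subset\ZM[\Sba]\setminus\{0\}$ such that, for all $?\in\{L,R,LR\}$, the relation $\sim_?^X$ (for $X\in\positif(\ZM[\Sba])$) depends on $X$ only through $\sgn_\EC(X)$. By the discussion preceding Conjecture A$'$, the $\AG_\EC$-facets are exactly the nonempty fibres of $\sgn_\EC$, so this says precisely that $X\mapsto\ \sim_?^X$ is constant on $\AG_\EC$-facets, i.e.\ Conjecture A(a) with $\AG=\AG_\EC$ (and $\AG_\EC$ is complete since $\Sba\subseteq\EC$); by Corollary \ref{phi X} and \ref{sgn}, this also covers arbitrary weight functions $\ph:\Sba\to\G$. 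Since $W$ is finite, the preorder $\le_L^X$ is the transitive closure of the relation ``$\exists\,w\in W$ with $h^X_{w,y,x}\neq 0$'', where $C^X_wC^X_y=\sum_z h^X_{w,y,z}C^X_z$ in $\ZM[\G_X]$, and similarly for $\le_R^X,\le_{LR}^X$; passing to this transitive closure and then to the associated equivalence relation is a finite combinatorial operation on the fixed finite set $W$. Hence it suffices to find one finite $\EC$ (the same for all three $?$, since the $h^X_{w,y,z}$ do not depend on $?$) such that, for all $w,y,z\in W$, the truth value of ``$h^X_{w,y,z}\neq 0$'' depends on $X$ only through $\sgn_\EC(X)$.

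First I would fix, once and for all, a finite symmetric set $\Pi\subset\ZM[\Sba]$ containing every monomial exponent occurring in the relevant computations. The entries of the matrix of the bar-involution in the $T$-basis are built from $\overline{T_s}=T_s^{-1}=T_s-e^{\ph_X(s)}+e^{-\ph_X(s)}$, so in $\ZM[\G_X]$ they are $\ZM$-combinations of $e^{\ph_X(\mu)}$ with $\mu\in D:=\{\mu\in\ZM[\Sba]:|\mu_\o|\le\ell_\o(w_0)\ \text{for all}\ \o\in\Sba\}$, a fixed finite set; moreover they are the images under $(\can_X)_*$ of universal elements of $R=\ZM[\ZM[\Sba]]$ supported in $D$. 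The standard recursion computing the coefficients $p^X_{y,w}$ of $C^X_w$ in the $T$-basis expresses $p^X_{y,w}$ in terms of these entries and of the $p^X_{z,w}$ with $z>y$ using only: $\ZM$-linear combination, multiplication of at most $|W|$ such elements, the bar-involution, and truncation to $\ZM[\G_{X,<0}]$. Since truncation can only delete monomials, an induction on $|W|$ shows that every $p^X_{y,w}$ — and then, by inverting the change of basis (which is of the same nature), every $h^X_{w,y,z}$ — is supported in the fixed finite symmetric set $\Pi$ of all sums of at most a fixed number of elements of $D$. Put $\EC=\Pi\cup(\Pi-\Pi)\cup\Sba$; it is finite and complete.

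It remains to check that the $\ZM$-coefficients of $h^X_{w,y,z}=\sum_{\mu\in\Pi}a^X_\mu\,e^{\ph_X(\mu)}$ depend on $X$ only through $\sgn_\EC(X)$; in particular whether $h^X_{w,y,z}=0$ will then depend only on $\sgn_\EC(X)$, finishing the proof. One runs the recursion above symbolically, representing each intermediate quantity as a formal $\ZM$-combination of symbols $\langle\mu\rangle$, $\mu\in\Pi$. The operations of $\ZM$-linear combination, of multiplication $\langle\mu\rangle\mapsto\langle\mu\pm\o\rangle$, and of the bar-involution $\langle\mu\rangle\mapsto\langle-\mu\rangle$ are manifestly $X$-independent; the datum of $X$ intervenes only (i) when two symbols $\langle\mu\rangle,\langle\mu'\rangle$ must be identified, which happens exactly when $\mu-\mu'\in X\cap(-X)$, a condition read off from $\sgn_{\Pi-\Pi}(X)$, and (ii) at a truncation step, where $\langle\mu\rangle$ is kept iff $\ph_X(\mu)<0$, i.e.\ iff $\mu\in X\setminus(-X)$, a condition read off from $\sgn_\Pi(X)$. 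Thus the final coefficients of $h^X_{w,y,z}$, hence all the elementary relations, hence $\le_L^X,\le_R^X,\le_{LR}^X$ and therefore the associated equivalence relations $\sim_?^X$, depend on $X$ only through $\sgn_\EC(X)$.

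The main obstacle is the rigour of the second paragraph: identifying precisely the recursion that computes the $C^X_w$ together with the inverse change of basis, and verifying that every step really is one of the four ``combinatorially continuous'' operations above, so that a single finite $\Pi$ works uniformly in $X$. This is exactly where the finiteness of $W$ enters — through the bounds $\ell_\o(w)\le\ell_\o(w_0)$ and the bounded number of recursion steps; for infinite $W$ no such a priori finite $\Pi$ is available and this argument does not apply. Everything else (the reduction of the preorders to finitely many elementary relations, the stability of the conclusion under forming generated preorders and equivalence relations, the reduction from arbitrary $\ph$ to positive subsets via Corollary \ref{phi X}, and the bookkeeping with $\sgn_\EC$ and $\AG_\EC$-facets) is routine given the material already developed.
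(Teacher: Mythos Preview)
Your approach is correct and is essentially the argument that Geck gives in \cite[\S 3]{geck f4}, which is all the paper invokes for this proposition (it does not supply its own proof). The core idea --- that finiteness of $W$ bounds uniformly the monomial support of the $R$-polynomials, the $p_{y,w}$, and the structure constants $h_{x,y,z}$, so that the entire Kazhdan--Lusztig recursion depends on $X$ only through finitely many sign conditions on elements of $\ZM[\Sba]$ --- is exactly Geck's. Your formulation via $\sgn_\EC$ and the reduction to Conjecture~A$'$(a) is a clean way to package it in the language of this paper.

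Two small remarks. First, a sign slip: the condition $\ph_X(\mu)<0$ is equivalent to $\mu\in(-X)\setminus X$, not $\mu\in X\setminus(-X)$ as you wrote; this does not affect the argument, since either way it is read off from $\sgn_\Pi(X)$. Second, your ``main obstacle'' paragraph correctly identifies where care is needed, and the details are indeed routine: the recursion $p_{z,w}=(\text{negative part of }\sum_{y>z}\overline{p_{y,w}}\,R_{z,y})$ together with the unitriangular inversion $T_c=\sum_z q_{c,z}C_z$ (a finite alternating sum of products of at most $\ell(w_0)$ of the $p$'s) shows that everything is supported in $N\cdot D$ for $N$ roughly $3\ell(w_0)$, and the only $X$-dependent step is the truncation. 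You might streamline by noting that $0\in\Pi$ forces $\Pi\subseteq\Pi-\Pi$, so $\EC=(\Pi-\Pi)\setminus\{0\}$ already suffices (and contains $\Sba$).
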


%\bigskip
%
%\begin{theo}\label{fini p1}
%If $W$ is finite and if Lusztig's Conjectures 
%\cite[Conjectures {\bfseries P1}-{\bfseries P15}]{lusztig} 
%hold for all maps $\ph : \Sba \to \G$, then 
%Conjecture A holds.
%\end{theo}
%
%\bigskip

%\bigskip
%\vskip1cm
%
%Notons $\CC^+$ la chambre $\UC(-s) \cap \UC(2s-t)$ (voir dessin). Dans ce cas, 
%J. Guilhot \cite[??]{??} a calcul\'e les cellules \`a gauche, \`a droite et bilat\`eres 
%pour $(W,S,\ph_X)$ lorsque $X \in \CC^+$ (ou $X \in \CC^-$, en utilisant 
%le corollaire \ref{signe celules}). D'autre part, si on note $\HC_s^+$ la facette 
%$\HC_s \cap \UC(-t)$, alors $\CC$ est $\CC^-$ sont les seules chambres telles que 
%$\HC_s^+...$
%
%
%\bigskip
\newpage

\appendix

\section{Positive subsets of an abelian group\label{section positive}}

\medskip

\begin{quotation}
{\it In this appendix, we fix a free $\ZM$-module $\L$ of finite rank 
and we set $V=\RM \otimes_\ZM \L$.}
\end{quotation}

\medskip

The aim of this Appendix is to gather some properties of 
{\it positive subsets} of $\L$ (as defined in \cite[\S 1]{bonnafe positif}). 
We recall how the set of such positive subsets can be endowed 
with a topology, and some particular features of this topology.

\def\tors{{\mathrm{tor}}}

\bigskip

\noindent{\bf Definitions, preliminaries.\label{sous positive}} 
A subset $X$ of $\L$ is called {\it positive} if the following three 
conditions are fulfilled:
\begin{itemize}\itemindent1cm
\itemth{P1} $\L=X \cup (-X)$.

\itemth{P2} $X+X \subset X$.

\itemth{P3} $X \cap (-X)$ is a subgroup of $\L$.
\end{itemize}
Let $\positif(\L)$ denote the set of positive subsets of $\L$. If 
$\G$ is a totally ordered group and if $\ph : \L \to \G$ is a morphism 
of groups, we set
$$\Pos(\ph)=\{\l \in \L~|~\ph(\l) \ge 0\}.$$
It is then clear that 
\equatap\label{noyau pos}
\Ker \ph = \Pos(\ph) \cap \Pos(-\ph) = \Pos(\ph) \cap -\Pos(\ph)
\endequatap
and that
\equatap\label{positif exemple}
\text{\it $\Pos(\ph)$ is a positive subset of $\L$}.
\endequatap
%Let us recall the following result \cite[lemme 1.3]{bonnafe positif}~:
%
%\bigskip

%\begin{lemap}\label{proprietes positives}
%Let $X$ be a positive subset of $\L$. Then:
%\begin{itemize}
%\itemth{a} $-X \in \positif(\L)$. 
%
%\itemth{b} $0 \in X$.
%
%\itemth{c} If $\l \in \L$ and if $r \in \ZM_{>0}$ is such that 
%$r\l \in X$, then $\l \in X$.
%
%\itemth{d} $\L/(X \cap (-X))$ is torsion-free.
%\end{itemize}
%\end{lemap}
%
%\bigskip

The converse of property \ref{positif exemple} holds. Indeed, 
let $X \in \positif(\L)$ and let $\can_X : \L \to \L/(X \cap(-X))$ 
denote the canonical morphism; if $\g$ and $\g'$ 
belong to $\L/(X \cap(-X))$, we shall write $\g \leqslant_X \g'$ 
if there exists a representative of $\g' - \g$ which belongs to $X$. 
It is easily seen that 
\equatap\label{relation independante}
\text{\it $\g \leqslant_X \g'$ if and only if all the representatives 
of $\g'-\g$ belong to $X$.}
\endequatap
We then deduce immediately from properties (P1), (P2) and (P3) 
of positive subsets that
\equatap\label{ordre X}
\text{\it $(\L/(X \cap (-X)),\le_X)$ is a totally ordered abelian group}
\endequatap
and that 
\equatap\label{X pos}
X=\Pos(\can_X).
\endequatap

\bigskip

\noindent{\bf Functoriality.} 
If $\s : \L \to \L'$ is a morphism between two free abelian groups 
of finite rank, then \cite[Lemma 1.2]{bonnafe positif} the map 
$$\fonction{\s^*}{\positif(\L')}{\positif(\L)}{X}{\s^{-1}(X)}$$
is well-defined. This implies that $\positif$ is a contravariant 
functor from the category of free $\ZM$-modules of finite rank 
to the category of sets \cite[\S 1.2]{bonnafe positif}.

\bigskip

\noindent{\bf Linear forms.} 
Let $V^*$ denote the dual of $V$ (i.e. $V^*=\Hom_\RM(V,\RM)$). 
If $\ph \in V^*$, we shall denote for simplification 
$\Pos(\ph)$ the positive subset $\Pos(\ph|_\L)$. This defines a map 
$$\Pos : V^* \longto \positif(\L).$$
We recall how to classify the positive subsets of $\L$ 
thanks to this application. Let $\FC(\L)$ denote the set 
of finite sequences $(\ph_1,\dots,\ph_r)$ such that, for all 
$i \in \{1,2,\dots,r\}$, $\ph_i$ is a non-zero linear form on 
$\RM \otimes_\ZM (\L \cap \Ker \ph_{i-1})$ (with the convention that 
$\ph_0=0$). By convention, we assume that the empty sequence, 
denoted by $\vide$, belongs to $\FC(\L)$. 

Let $d = \dim V$. 
Note that, if $(\ph_1,\dots,\ph_r) \in \FC(\L)$, then $r \le d$. 
We define the following action of $(\RM_{>0})^d$ on $\FC(\L)$: 
if $(\kappa_1,\dots,\kappa_d) \in (\RM_{>0})^d$ and if 
$(\ph_1,\dots,\ph_r) \in \FC(\L)$, we set 
$$(\kappa_1,\dots,\kappa_d)\cdot (\ph_1,\dots,\ph_r) = 
(\kappa_1 \ph_1,\dots,\kappa_r \ph_r).$$
Let us endow $\RM^r$ with the lexicographic order: then $\RM^r$ is a totally 
ordered abelian group and $(\ph_1,\dots,\ph_r) : \L \to \RM^r$ 
is a morphism of groups. So $\Pos(\ph_1,\dots,\ph_r)$ is well-defined 
and belong to $\positif(\L)$ (see \ref{positif exemple}). In fact, all the 
positive subsets of $\L$ are obtained in this way
\cite[Proposition 1.10]{bonnafe positif}:

\bigskip

\begin{propap}\label{surjection pos}
The map
$$\fonctio{\FC(\L)}{\positif(\L)}{\phb}{\Pos(\phb)}$$
is well-defined and induces a bijection
$\FC(\L)/(\RM_{> 0})^d \stackrel{\sim}{\longrightarrow} \positif(\L)$. 
\end{propap}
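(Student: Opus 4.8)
The plan is to establish all three assertions—that the map $\phb\mapsto\Pos(\phb)$ is well defined, that it is surjective, and that two sequences with the same image lie in a single $(\RM_{>0})^d$-orbit—by induction on $d=\dim V$, the rank of $\L$.

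\textbf{Well-definedness and the action.} For $\phb=(\ph_1,\dots,\ph_r)\in\FC(\L)$, choose arbitrary extensions of each $\ph_i$ to a linear form on $V$; this makes $\phb\colon\L\to\RM^r$ a morphism of groups into the lexicographically ordered group $(\RM^r,\le_{\mathrm{lex}})$, so $\Pos(\phb)$ is a positive subset of $\L$ by \ref{positif exemple}. First I would check that $\Pos(\phb)$ is independent of the chosen extensions: the lexicographic comparison of $\l\in\L$ with $0$ only ever uses the value $\ph_i(\l)$ when $\l\in\L\cap\Ker\ph_1\cap\dots\cap\Ker\ph_{i-1}$, a set on which $\ph_i$ is intrinsically defined. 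Since scaling the coordinates of $\RM^r$ by positive reals is an automorphism of $(\RM^r,\le_{\mathrm{lex}})$, one gets $\Pos(\kappa_1\ph_1,\dots,\kappa_r\ph_r)=\Pos(\phb)$ for all $(\kappa_i)\in(\RM_{>0})^d$, so the map descends to $\FC(\L)/(\RM_{>0})^d$.

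\textbf{Surjectivity.} Let $X\in\positif(\L)$. If $X=\L$ then $X=\Pos(\vide)$; otherwise $G:=\L/(X\cap(-X))$ is, by \ref{ordre X} and \ref{X pos}, a non-zero totally ordered abelian group of finite rank with $X=\Pos(\can_X)$. The crucial step is to produce a non-zero $\ph_1\in V^*$ such that
\[
\{\l\in\L\mid\ph_1(\l)>0\}\ \subseteq\ X\ \subseteq\ \{\l\in\L\mid\ph_1(\l)\ge0\}.
\]
For this I would invoke the structure of finite-rank totally ordered abelian groups: the convex subgroups of $G$ form a finite chain, so $G$ has a maximal proper convex subgroup $H$, the quotient $G/H$ is Archimedean, and by Hölder's theorem $G/H$ embeds order-preservingly into $\RM$. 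Composing $\L\twoheadrightarrow G\twoheadrightarrow G/H\injto\RM$ and extending to $V$ yields $\ph_1$, the two inclusions following from the convexity of $H$ and the fact that $G/H\injto\RM$ is order-preserving and injective. Now put $\L_1=\L\cap\Ker\ph_1$, a sublattice with $\dim(\RM\otimes\L_1)<d$, and note $X\cap\L_1\in\positif(\L_1)$. By induction $X\cap\L_1=\Pos_{\L_1}(\ph_2,\dots,\ph_r)$ for some $(\ph_2,\dots,\ph_r)\in\FC(\L_1)$, whence $(\ph_1,\ph_2,\dots,\ph_r)\in\FC(\L)$; splitting $\L$ according to the sign of $\ph_1$ and using the displayed inclusions gives
\[
X=\{\l\in\L\mid\ph_1(\l)>0\}\ \cup\ (X\cap\L_1)=\Pos(\ph_1,\ph_2,\dots,\ph_r).
\]

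\textbf{Injectivity on the quotient.} Suppose $(\ph_1,\dots,\ph_r),(\psi_1,\dots,\psi_s)\in\FC(\L)$ satisfy $\Pos(\ph_1,\dots,\ph_r)=\Pos(\psi_1,\dots,\psi_s)=:X$; I must show $r=s$ and $\psi_i\in\RM_{>0}\ph_i$ for every $i$ (as linear forms on $\RM\otimes(\L\cap\Ker\ph_1\cap\dots\cap\Ker\ph_{i-1})$), which is exactly the statement that the two sequences lie in one orbit. Since $\Pos$ of a sequence equals $\L$ precisely when the sequence is empty, we may assume $r,s\ge1$. One recovers $\ph_1$ up to a positive scalar from $X$ alone: $X\subseteq\{\ph_1\ge0\}$ gives $\overline{\RM_{\ge0}X}\subseteq\{\ph_1\ge0\}$, while $\{\l\in\L\mid\ph_1(\l)>0\}\subseteq X$ together with the density of $\bigcup_{N\ge1}\tfrac1N\L$ in $V$ gives the reverse inclusion of closed cones; hence $\overline{\RM_{\ge0}X}$ is the closed half-space $\{\ph_1\ge0\}$, and by symmetry it also equals $\{\psi_1\ge0\}$, so $\psi_1=\kappa_1\ph_1$ with $\kappa_1>0$. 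Then $\L_1:=\L\cap\Ker\ph_1=\L\cap\Ker\psi_1$, and restricting to $\L_1$ gives $\Pos_{\L_1}(\ph_2,\dots,\ph_r)=X\cap\L_1=\Pos_{\L_1}(\psi_2,\dots,\psi_s)$; since $\dim(\RM\otimes\L_1)<d$, the inductive hypothesis finishes the proof. Combining surjectivity and this, the induced map $\FC(\L)/(\RM_{>0})^d\to\positif(\L)$ is a bijection.

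\textbf{Main obstacle.} The only genuinely non-formal point is the construction of $\ph_1$ in the surjectivity step, which rests on the structural fact that a non-zero totally ordered abelian group of finite rank admits a maximal proper convex subgroup with Archimedean (hence real-embeddable) quotient; all the rest is bookkeeping with the lexicographic order. An alternative, more self-contained route would argue directly with the closed convex cone $\overline{\RM_{\ge0}X}$ and a supporting-hyperplane argument, but one then still has to rule out $\overline{\RM_{\ge0}X}=V$, which is precisely the same structural input in disguise.
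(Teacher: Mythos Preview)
The paper does not actually prove this proposition here; it merely cites \cite[Proposition~1.10]{bonnafe positif}. So there is no in-paper argument to compare against, and your proof must be judged on its own.

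Your argument is correct. The inductive scheme---peel off a first linear form $\ph_1$ coming from the Archimedean quotient $G/H$ of $G=\L/(X\cap(-X))$ by its maximal proper convex subgroup, then recurse on $\L_1=\L\cap\Ker\ph_1$---is the standard route, essentially the finite-rank Hahn embedding theorem specialised to this situation. The verifications you sketch (that $\{\ph_1>0\}\subseteq X\subseteq\{\ph_1\ge0\}$ follows from convexity of $H$, that $X=\{\ph_1>0\}\cup(X\cap\L_1)$, and that $(\ph_2,\dots,\ph_r)\in\FC(\L_1)$ because $\L\cap\Ker\ph_1\cap\cdots\cap\Ker\ph_{i-1}=\L_1\cap\Ker\ph_2\cap\cdots\cap\Ker\ph_{i-1}$) all go through. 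For injectivity, your recovery of $\ph_1$ up to a positive scalar via $\overline{\RM_{\ge0}X}=\{\ph_1\ge0\}$ is clean; the density argument works because $\L$ has full rank in $V$, so any $v$ with $\ph_1(v)>0$ is approximated by $\tfrac1N\l_N$ with $\ph_1(\l_N)>0$.

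One small point worth making explicit for a reader: in the surjectivity step you use that $G$ has \emph{finite rank} (as a quotient of $\L$), which is what guarantees the chain of convex subgroups is finite and hence that a maximal proper convex subgroup exists; and you use that $G\neq0$ to ensure $H$ is proper, hence $\ph_1\neq0$, hence $\dim(\RM\otimes\L_1)<d$ so the induction terminates. You have these, but they deserve a sentence each rather than being folded into ``structure of finite-rank totally ordered abelian groups''.
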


\noindent{\sc Remark - } By convention, $\Pos(\vide)=\L$.\finl

\bigskip

\noindent{\bf Topology on 
${\boldsymbol{\positif(\L)}}$.} 
If $E$ is a subset of $\L$, we set 
$$\UC(E)=\{X \in \positif(\L)~|~X \cap E=\vide\}.$$
If $\l_1$,\dots, $\l_n$ are elements of $\L$, we shall denote for 
simplification $\UC(\l_1,\dots,\l_n)$ the set 
$\UC(\{\l_1,\dots,\l_n\})$. If this is necessary, these sets will be denoted by 
$\UC_\L(E)$ or $\UC_\L(\l_1,\dots,\l_n)$. Then
\equatap\label{u inter}
\UC(E)=\bigcap_{\l \in E} \UC(\l).
\endequatap
Note that 
\equatap\label{u vide}
\UC(\vide)=\positif(\L)\qquad\text{and}\qquad \UC(\L)=\{\vide\}.
\endequatap
On the other hand, if $(E_i)_{i \in I}$ is a family of subsets of $\L$, then 
\equatap\label{u intersection}
\bigcap_{i \in I} \UC(E_i) = \UC\bigl(\bigcup_{i \in I} E_i\bigr).
\endequatap
A subset $\UC$ of $\positif(\L)$ is called {\it open} if, 
for all $X \in \UC$, there exists a {\bf finite} subset $E$ of $\L$ such that 
$X \in \UC(E)$ and $\UC(E) \subset \UC$ (see \cite[\S 2.1]{bonnafe positif}). 
The equality \ref{u intersection} shows that this defines a topology on 
$\positif(\L)$. 

\bigskip

\exampleap{Z} 
The topological space $\positif(\ZM)$ has only three points: 
$\ZM$, $\ZM_{\ge 0}$ and $\ZM_{\le 0}$. Among them, 
only $\ZM$ is closed whereas $\ZM_{\ge 0}$ and $\ZM_{\le 0}$ 
are open (indeed, $\{\ZM_{\ge 0}\} = \UC(-1)$ 
and $\{\ZM_{\le 0}\} = \UC(1)$).\finl

\bigskip

The following result has been shown in 
\cite[Proposition 2.7]{bonnafe positif}:

\bigskip

\begin{theoap}\label{continu}
The map $\Pos : V^* \longto \positif(\L)$ is continuous and dominant 
(i.e. its image is dense). Moreover, it induces an homeomorphism between its 
image and $V^*/\RM_{>0}$. 
\end{theoap}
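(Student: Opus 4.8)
The plan is to prove the three assertions separately: $\Pos$ is continuous; $\Pos(V^*)$ is dense; and the induced map $\overline{\Pos} : V^*/\RM_{>0} \to \Pos(V^*)$ is a homeomorphism. Since $\Pos(\kappa\ph) = \Pos(\ph)$ for every $\kappa \in \RM_{>0}$, the map $\Pos$ does factor through $V^*/\RM_{>0}$; write $\overline{\Pos}$ for the factored map. For continuity it is enough, by the definition of the topology and by \ref{u inter}, to check that the preimage of each subbasic open set $\UC(\l)$ ($\l \in \L$) is open; and
$$\Pos^{-1}(\UC(\l)) = \{\ph \in V^* \mid \l \notin \Pos(\ph)\} = \{\ph \in V^* \mid \ph(\l) < 0\},$$
which is an open half-space (empty if $\l=0$). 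So $\Pos$, and hence $\overline{\Pos}$, is continuous.

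Next I would show $\overline{\Pos}$ is injective, i.e.\ that $\Pos(\ph) = \Pos(\psi)$ implies $\psi \in \RM_{>0}\ph$. The case $\ph=0$ is trivial, as $\Pos(\ph) = \L$ forces $\ph|_\L = 0$, hence $\ph=0$ because $\L$ spans $V$. Assume $\ph,\psi \ne 0$. For $\l \in \L$ one has $\ph(\l) > 0 \Leftrightarrow \l \in \Pos(\ph) \setminus (-\Pos(\ph))$, and likewise for $\psi$; so $\Pos(\ph) = \Pos(\psi)$ gives $\ph(\l)\psi(\l) \ge 0$ for all $\l \in \L$. The open set $\{x \in V \mid \ph(x)\psi(x) < 0\}$, if non-empty, would contain a point of $\QM \otimes_\ZM \L$ and hence, after rescaling, a point of $\L$; so it is empty, i.e.\ $\ph(x)\psi(x) \ge 0$ on all of $V$. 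Since $\ph \ne 0 \ne \psi$, this forces $\psi = \kappa\ph$ for some $\kappa > 0$ (if $\ph,\psi$ were independent, some $x$ has $\ph(x)=1$, $\psi(x)=-1$). In particular the fibres of $\Pos$ are exactly the $\RM_{>0}$-orbits. For density: by Proposition \ref{surjection pos}, an arbitrary positive subset is $\Pos(\ph_1,\dots,\ph_r)$ for some flag in $\FC(\L)$, $\RM^r$ being lexicographically ordered. Given a non-empty basic open set $\UC(\l_1,\dots,\l_n)$, choose such a flag with $(\ph_1(\l_i),\dots,\ph_r(\l_i)) <_{\mathrm{lex}} 0$ for all $i$, extend each $\ph_j$ to an element of $V^*$, and put $\psi_\e = \ph_1 + \e\ph_2 + \dots + \e^{r-1}\ph_r$; for $\e > 0$ small enough one checks $\psi_\e(\l_i) < 0$ for every $i$, so $\Pos(\psi_\e) \in \UC(\l_1,\dots,\l_n) \cap \Pos(V^*)$. (Here one uses that $\ph_1(\l_i) = 0$ with $\l_i \in \L$ already forces $\l_i \in \L \cap \Ker\ph_1$, making $\ph_2(\l_i)$ well-defined independently of the extension, and then induction.) Hence $\Pos(V^*)$ is dense.

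To finish, $\overline{\Pos}$ is a continuous bijection and its source carries the quotient topology, so it is a homeomorphism as soon as $\Pos : V^* \to \Pos(V^*)$ is an open map (then $\Pos$ is a quotient map with the same fibres as $V^* \to V^*/\RM_{>0}$, and the conclusion is formal). Let $\Omega \subseteq V^*$ be open and $\RM_{>0}$-stable and let $\Pos(\ph_0) \in \Pos(\Omega)$. If $\ph_0 = 0$, the only $\RM_{>0}$-stable open set containing $0$ is $V^*$, so $\Pos(\Omega) = \Pos(V^*)$. If $\ph_0 \ne 0$, then $\Omega$ contains an open pointed cone around the ray $\RM_{>0}\ph_0$; by a standard approximation I would find finitely many $\l_1,\dots,\l_n \in \L$ with $\ph_0(\l_i) < 0$ for all $i$ and with $\{\ph \mid \ph(\l_i) < 0\ \forall i\} \subseteq \Omega$ — taking the $\l_i$, rescaled into $\L$, to be a fine finite subset of a small cap of directions around the vector of $V$ dual to $-\ph_0$, so that the polyhedral dual cone they cut out is an arbitrarily thin cone around $\ph_0$. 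Writing $N = \Pos^{-1}(\UC(\l_1,\dots,\l_n))$, one gets $\ph_0 \in N \subseteq \Omega$, hence $\Pos(\ph_0) \in \UC(\l_1,\dots,\l_n) \cap \Pos(V^*) = \Pos(N) \subseteq \Pos(\Omega)$; so $\Pos(\Omega)$ is open in $\Pos(V^*)$.

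I expect the openness step of the last paragraph (equivalently, continuity of $\overline{\Pos}^{-1}$) to be the main obstacle: continuity and injectivity of $\overline{\Pos}$ and density of the image are essentially formal once Proposition \ref{surjection pos} is in hand, whereas openness is exactly where the coarse topology of $\positif(\L)$ must be matched against the Euclidean topology of $V^*$, and it rests on the convex-geometry fact that every Euclidean cone around a ray $\RM_{>0}\ph_0$ contains a polyhedral subcone cut out by finitely many inequalities $\ph(\l) < 0$ with $\l \in \L$. The density argument needs only the milder care already noted: the tail $\ph_2,\dots,\ph_r$ of a flag is a priori defined only on subspaces, so one must check the chosen extensions do not matter on the finitely many lattice points being tested.
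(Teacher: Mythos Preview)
The paper does not give its own proof of this theorem: it simply cites \cite[Proposition~2.7]{bonnafe positif}. So there is no in-paper argument to compare against, and your task was really to supply a proof from scratch.

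Your arguments for continuity, for injectivity of $\overline{\Pos}$, and for density of the image are correct and cleanly done. The density argument in particular is handled carefully: the inductive observation that for each test point $\l_i$ the first nonvanishing $\ph_k(\l_i)$ is negative, and that the earlier $\tilde\ph_j(\l_i)$ vanish because $\l_i$ already lies in the genuine domain of $\ph_j$, is exactly what is needed to make the perturbation $\psi_\e=\ph_1+\e\ph_2+\cdots$ work.

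The one genuine gap is in the openness step. You correctly isolate the convex-geometry fact you need: given $\ph_0\neq 0$ and an $\RM_{>0}$-stable open $\Omega\ni\ph_0$, find $\l_1,\dots,\l_n\in\L$ with $\ph_0(\l_i)<0$ and $\{\ph\mid\ph(\l_i)<0\ \forall i\}\subseteq\Omega$. But your proposed construction --- choosing the $\l_i$ in a \emph{small} cap of directions around $-\ph_0^*$ --- produces the opposite of what you want. If all the $\l_i$ are close to a single direction $-\ph_0^*$, the intersection $\bigcap_i\{\ph(\l_i)<0\}$ is close to the full half-space $\{\ph(\ph_0^*)>0\}$, which is as wide as possible, not thin. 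Dually: a narrow cone of constraints has a wide polar, and vice versa.

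A correct argument goes via compactness. Let $S$ be the unit sphere in $V^*$ and $F=S\setminus\Omega$, a compact set not containing $p_0=\ph_0/|\ph_0|$. For each $q\in F$ the linear forms $p_0$ and $q$ are not positive multiples of each other, so there exists $v\in V$ with $p_0(v)<0<q(v)$; by density of $\QM\otimes_\ZM\L$ in $V$ one may take $v\in\L$. The open sets $\{q'\in S\mid q'(v)>0\}$ cover $F$; extract a finite subcover indexed by $\l_1,\dots,\l_n\in\L$. Then the polyhedral cone $C=\{\ph\mid\ph(\l_j)<0\ \forall j\}$ contains $\ph_0$, avoids $0$, and satisfies $C\cap S\subseteq\Omega\cap S$, hence $C\subseteq\Omega$. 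This gives $\Pos(\ph_0)\in\UC(\l_1,\dots,\l_n)\cap\Pos(V^*)\subseteq\Pos(\Omega)$, and openness follows. With this correction your proof is complete.
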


\bigskip

Recall that, if $\s : \L \to \L'$ is a morphism between two free abelian 
groups of finite rank, 
then \cite[Proposition 2.6]{bonnafe positif} the map 
$\s^* : \positif(\L') \to \positif(\L)$ is continuous. So 
$\positif$ is a contravariant functor from the category of free abelian groups 
of finite rank to the category of topological spaces.

\bigskip

\noindent{\bf Rational subspaces.} 
If $E$ is a subset of $\L$, we set 
$$\LC(E)=\{X \in \positif(\L)~|~E \subset X \cap (-X)\}.$$
Note that 
$$\Pos^{-1}(\LC(E))=\{\ph \in V^*~|~\forall~\l \in E,~\ph(\l)=0\}=E^\perp.$$
A {\it rational subspace} of $\positif(\L)$ is a subset of $\positif(\L)$ 
of the form $\LC(E)$, where $E$ is a subset of $\L$. If 
$\l \in \L\setminus \{0\}$, we shall denote by $\HC_\l$ the rational 
subspace $\LC(\{\l\})$: such a rational subspace is called a 
{\it rational hyperplane}. Note that 
\equatap\label{partage espace}
\positif(\L)= \UC(\l) \dotcup \HC_\l \dotcup \UC(-\l).
\endequatap

\bigskip

%\begin{propap}\label{topo espace}
%Let $E$ be a subset of $\L$ and let $\L(E)$ denote the subgroup  
%$\L \cap \sum_{\l \in E} \QM E$ of $\L$ and let $\s_E : \L \to \L/\L(E)$ 
%be the canonical map. Then:
%\begin{itemize}
%\itemth{a} $\LC(E)=\DS{\bigcap_{\l \in E\setminus\{0\}} \HC_\l} = 
%\{X \in \positif(\L)~|~\L(E) \subseteq X\}$.
%
%\itemth{b} $\LC(E)$ is closed in $\positif(\L)$. 
%
%\itemth{c} $\Pos^{-1}(\LC(E))=\{\ph \in V^*~|~\forall~\l \in %E,~\ph(\l)=0\}=E^\perp$.
%
%\itemth{d} $\Posbar(\pi(\LC(E)) \subseteq \LC(E)$. 
%
%\itemth{e} $\Posbar^{-1}(\LC(E))=\pi(\LC(E))$.
%
%\itemth{f} $\LC(E)=\overline{\Pos(\Pos^{-1}(\LC(E)))}$ .
%
%\itemth{g} The map $\s_E^* : \positif(\L/\L(E)) \to \positif(\L)$ 
%has $\LC(E)$ as image and induces an homeomorphism 
%$\positif(\L/\L(E)) \stackrel{\sim}{\longrightarrow} \LC(E)$. 
%\end{itemize}
%\end{propap}
%
%\bigskip

\noindent{\bf Half-spaces.} 
Let $\HC$ be a rational hyperplane of $\positif(\L)$ and let 
$\l\in \L\setminus\{0\}$ be such that $\HC=\HC_\l$. By \ref{partage espace}, 
the hyperplane $\HC$ defines a unique equivalence relation 
$\smile_{\!\HC}$ on $\positif(\L)$ such that the equivalence classes are 
$\UC(\l)$, $\HC$ and $\UC(-\l)$: note that this relation does not depend 
on the choice of $\l$. Moreover \cite[Proposition 3.2]{bonnafe positif}:

\bigskip

\begin{propap}\label{composantes connexes}
$\HC$ is a closed subset of $\positif(\L)$ and $\UC(\l)$ and $\UC(-\l)$ 
are the connected components of $\positif(\L)\setminus \HC$. Moreover 
$$\overline{\UC(\l)}=\UC(\l) \cup \HC_\l.$$
\end{propap}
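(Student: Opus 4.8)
The plan is to dispatch the three assertions separately, the main inputs being the classification of positive subsets by flags of linear forms (Proposition \ref{surjection pos}) and the continuity and density of $\Pos$ (Theorem \ref{continu}).

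That $\HC_\l$ is closed is immediate from \ref{partage espace}: its complement is $\UC(\l)\dotcup\UC(-\l)$, and each of $\UC(\l)$, $\UC(-\l)$ is open, being one of the subbasic open sets $\UC(\{\mu\})$ from the definition of the topology. The same equality \ref{partage espace} presents $\positif(\L)\setminus\HC_\l$ as a disjoint union of two sets which are each open and closed in the subspace topology (each is the complement of the other inside the subspace); hence, to show that these are exactly the connected components, it suffices to prove that $\UC(\l)$ and $\UC(-\l)$ are connected. Since $\Pos(\ph)\in\UC(\l)$ if and only if $\ph(\l)<0$, one has $\Pos^{-1}(\UC(\l))=\{\ph\in V^*\mid\ph(\l)<0\}$, a nonempty open half-space of $V^*$ (nonempty because $\l\neq0$), hence convex and connected; as $\Pos$ is continuous, $\Pos\bigl(\{\ph\mid\ph(\l)<0\}\bigr)$ is connected. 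This set equals $\Pos(V^*)\cap\UC(\l)$, and since $\Pos(V^*)$ is dense in $\positif(\L)$ by Theorem \ref{continu} while $\UC(\l)$ is open, it is dense in $\UC(\l)$; a space admitting a dense connected subset is connected, so $\UC(\l)$ is connected, and the same argument handles $\UC(-\l)$.

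It remains to compute $\overline{\UC(\l)}$. The inclusion $\overline{\UC(\l)}\subseteq\UC(\l)\cup\HC_\l$ is formal: by \ref{partage espace} the complement of the right-hand side is $\UC(-\l)$, which is open and disjoint from $\UC(\l)$, hence disjoint from $\overline{\UC(\l)}$. For the reverse inclusion, since $\UC(\l)\subseteq\overline{\UC(\l)}$ trivially, it is enough to prove $\HC_\l\subseteq\overline{\UC(\l)}$; as the sets $\UC(E)$ with $E$ a finite subset of $\L$ form a basis of the topology, this reduces to showing that for every $X\in\HC_\l$ and every finite $E\subseteq\L$ with $X\cap E=\vide$ one has $\UC(E)\cap\UC(\l)\neq\vide$. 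Here I would invoke Proposition \ref{surjection pos}: write $X=\Pos(\ph_1,\dots,\ph_r)$, so that $X\cap(-X)=\Ker(\ph_1,\dots,\ph_r)$ by \ref{noyau pos}. Since $\l\in X\cap(-X)$ and $\l\neq0$, the space $\RM\otimes_\ZM\bigl(\L\cap\Ker(\ph_1,\dots,\ph_r)\bigr)$ is nonzero (in particular $r<\dim V$), so I may choose a nonzero linear form $\ph_{r+1}$ on it with $\ph_{r+1}(\l)<0$; then $(\ph_1,\dots,\ph_{r+1})\in\FC(\L)$, and I set $X'=\Pos(\ph_1,\dots,\ph_{r+1})$. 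For $\mu\in E$ one has $\mu\notin X$, hence $(\ph_1,\dots,\ph_r)(\mu)<0$ for the lexicographic order on $\RM^r$; this tuple already has a nonzero coordinate, so adjoining $\ph_{r+1}(\mu)$ does not change the sign and $\mu\notin X'$. On the other hand $(\ph_1,\dots,\ph_r)(\l)=0$ while $\ph_{r+1}(\l)<0$, so $(\ph_1,\dots,\ph_{r+1})(\l)<0$ lexicographically and $\l\notin X'$. Thus $X'\in\UC(E)\cap\UC(\l)$, which finishes the argument and yields $\overline{\UC(\l)}=\UC(\l)\cup\HC_\l$.

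The routine ingredients are only the general topology facts used along the way (a nonempty clopen connected subset is a connected component; a space with a dense connected subset is connected; an open set disjoint from $A$ is disjoint from $\overline{A}$; the $\UC(E)$ form a basis). The step carrying the real content is the last one, the passage $X\rtordu X'$: one must refine the total order defining $X$ so as to push $\l$ strictly below $0$ while keeping the finitely many elements of $E$ outside the new positive subset — and this is precisely where the flag description of Proposition \ref{surjection pos}, rather than just linear forms on $V$, is indispensable, since a general $X\in\HC_\l$ need not lie in the image of $\Pos$.
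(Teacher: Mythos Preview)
Your argument is correct. The paper itself does not prove this proposition; it merely cites \cite[Proposition 3.2]{bonnafe positif}, so there is no in-paper proof to compare against directly. Your approach---using the partition \ref{partage espace} for closedness, the continuity and density of $\Pos$ from Theorem \ref{continu} together with the convexity of the half-space $\{\ph\in V^*\mid\ph(\l)<0\}$ for connectedness, and the flag description of Proposition \ref{surjection pos} to refine an arbitrary $X\in\HC_\l$ to an $X'\in\UC(\l)$ lying in any prescribed basic neighbourhood---is a clean and self-contained proof. The refinement step $X\rtordu X'$ is indeed the heart of the matter, and your handling of it (noting that the lexicographic sign of $(\ph_1,\dots,\ph_r)(\mu)$ for $\mu\in E$ is already determined before appending $\ph_{r+1}$, while for $\l$ the sign is decided only by $\ph_{r+1}$) is exactly right.
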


\bigskip

\noindent{\bf Hyperplane arrangements.} 
From now on, and until the end of this Appendix, we shall work under 
the following hypothesis:

\bigskip

\begin{quotation}
{\it We fix a {\bfit finite} set $\AG$ of rational hyperplanes 
of $\positif(\L)$.}
\end{quotation}

\bigskip

Following \cite[\S 3.3]{bonnafe positif}, we shall recall the notions 
of {\it facets}, {\it chambers} and {\it faces} associated to $\AG$, 
in a similar way as these notions are defined for ``real'' hyperplane 
arrangements \cite[Chapitre V, \S 1]{bourbaki}. 

Let $\smile_\AG$ denote the equivalence relation on $\positif(\L)$ defined 
as follows: if $X$ and $Y$ are two elements of $\positif(\L)$, we shall write 
$X \smile_\AG Y$ if $X \smile_{\!\HC} Y$ for all $\HC \in \AG$. 
We shall call {\it facets} (or {\it $\AG$-facets}) the equivalence classes 
for the relation $\smile_\AG$. We shall call {\it chambers} (or
{\it $\AG$-chambers}) the facets which meet no hyperplane of $\AG$. 
If $\FC$ is a facet, we set 
$$\LC_{\AG} ( \FC) = \bigcap_{\overset{\HC \in \AG}{\FC \subset \HC}} \HC,$$
with the usual convention that $\LC_{\AG} (\FC) = \positif (\L)$ if 
$\FC$ is a chamber. This rational subspace will be called the {\it support} 
of $\FC$ and we define the {\it dimension} of $\FC$ to be the 
non-negative integer 
$$\dim \FC = \dim_{\RM} \Pos^{- 1} ( \LC_{\AG} (\FC)) .$$
Similarly, we shall call the {\it codimension} of $\FC$ the 
non-negative integer 
$$\codim \FC=\dim_\RM V - \dim \FC.$$
With these definitions, a chamber is a facet of codimension $0$. The next 
proposition has been proved in \cite[Proposition 3.3]{bonnafe positif}. 
The set of $\AG$-facets (respectively $\AG$-chambers) is denoted 
by $\facets(\AG)$ (respectively $\chambres(\AG)$). 

\bigskip

\begin{propap}\label{facettes}
Let $\FC$ be a facet and let $X \in \FC$. Then:
\begin{itemize}
\itemth{a} $\FC=\DS{\bigcap_{\HC \in \AG}} \DC_\HC(X)$.

\itemth{b} $\overline{\FC}=\DS{\bigcap_{\HC \in \AG}} \overline{\DC_\HC(X)}$.

\itemth{c} $\overline{\FC}$ is the union of $\FC$ and of facets 
of strictly smaller dimension. 

\itemth{d} If $\FC'$ is a facet such that $\overline{\FC}=\overline{\FC}'$, 
then $\FC=\FC'$.
\end{itemize}
\end{propap}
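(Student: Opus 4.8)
The plan is to prove the four items in order, deducing (c) and (d) formally from (b), and reducing (b) itself to the classification of positive subsets by finite sequences of linear forms (Proposition \ref{surjection pos}) together with the homeomorphism of Theorem \ref{continu}. Throughout, $\DC_\HC(X)$ denotes the $\smile_\HC$-class of $X$, i.e. one of $\UC(\l)$, $\HC_\l$, $\UC(-\l)$ when $\HC=\HC_\l$. Item (a) is a direct unwinding of definitions: for $X \in \FC$ one has $Y \smile_\AG X$ if and only if $Y \smile_\HC X$ for every $\HC \in \AG$, i.e. if and only if $Y \in \bigcap_{\HC \in \AG}\DC_\HC(X)$; hence $\FC = \bigcap_{\HC \in \AG}\DC_\HC(X)$.

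For (b), the inclusion $\overline\FC \subseteq \bigcap_{\HC\in\AG}\overline{\DC_\HC(X)}$ follows at once from (a), since $\FC \subseteq \DC_\HC(X)$ gives $\overline\FC \subseteq \overline{\DC_\HC(X)}$ for each $\HC$. For the reverse inclusion I would take $Y$ with $Y \in \overline{\DC_\HC(X)}$ for all $\HC \in \AG$, write $X = \Pos(\xib)$ and $Y = \Pos(\etab)$ with $\xib=(\xi_1,\dots,\xi_r)$ and $\etab=(\eta_1,\dots,\eta_s)$ finite sequences of linear forms (Proposition \ref{surjection pos}), and form $Z = \Pos(\etab \ast \xib)$, where $\etab \ast \xib$ is the sequence obtained by appending to $\etab$ the successive restrictions to $\RM \otimes_\ZM (\L \cap \bigcap_i \Ker \eta_i)$ of the $\xi_j$, discarding those that vanish there. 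The decisive point is an elementary comparison of lexicographic signs: for $\l \in \L$, if $\etab(\l) \neq 0$ (i.e. $Y \notin \HC_\l$) then $(\etab \ast \xib)(\l)$ has the same lexicographic sign as $\etab(\l)$, so $Z$ lies on the same side of $\HC_\l$ as $Y$; if $\etab(\l) = 0$ (i.e. $Y \in \HC_\l$) then $(\etab \ast \xib)(\l)$ has the same lexicographic sign as $\xib(\l)$, so $Z$ lies on the same side of, or inside, $\HC_\l$ as $X$. Since $\overline{\DC_{\HC_\l}(X)} = \DC_{\HC_\l}(X) \cup \HC_\l$, the hypothesis $Y \in \overline{\DC_{\HC_\l}(X)}$ forces $Z \smile_{\HC_\l} X$ in the first case as well, so $Z \in \FC$. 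Finally, for any basic neighbourhood $\UC(E)$ of $Y$ (with $E$ finite and $E \cap Y = \vide$), each $\mu \in E$ satisfies $\etab(\mu) < 0$ in lexicographic order, hence $(\etab\ast\xib)(\mu) < 0$, i.e. $\mu \notin Z$; thus $Z \in \FC \cap \UC(E)$, proving $Y \in \overline\FC$.

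For (c), first observe that $\overline\FC = \bigcap_{\HC\in\AG}\overline{\DC_\HC(X)}$ is a union of $\smile_\AG$-classes — if $Y \in \overline\FC$ and $Y' \smile_\AG Y$, then $Y'$ lies in the same $\smile_\HC$-class as $Y$ for each $\HC$, and that class is contained in $\overline{\DC_\HC(X)}$ — hence a union of facets, one of which is $\FC$. If $\FC'$ is a facet with $\FC' \subseteq \overline\FC$, then every $\HC \in \AG$ containing $\FC$ (equivalently $X \in \HC$, so $\overline{\DC_\HC(X)}=\HC$) also contains $\FC'$; via the formula $\dim\FC = \dim_\RM \Pos^{-1}(\LC_\AG(\FC))$, the identity $\Pos^{-1}(\LC(E)) = E^\perp$, and Theorem \ref{continu}, this yields $\dim\FC' \le \dim\FC$, with equality only if $\FC$ and $\FC'$ lie in exactly the same members of $\AG$; a short inspection of the remaining (open) half-spaces then gives $\DC_\HC(X') = \DC_\HC(X)$ for all $\HC$, i.e. $\FC = \FC'$. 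Item (d) is then immediate: if $\overline\FC = \overline{\FC'}$, then $\FC \subseteq \overline{\FC'}$ and $\FC' \subseteq \overline\FC$, so by (c) either $\FC=\FC'$ or both $\dim\FC < \dim\FC'$ and $\dim\FC' < \dim\FC$, which is impossible.

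The hard part is the reverse inclusion in (b): passing from the purely topological hypothesis ``$Y \in \overline{\DC_\HC(X)}$ for all $\HC$'' to an actual point of $\FC$ inside every neighbourhood of $Y$. The concatenation $\etab \ast \xib$ is the device that makes this constructive, and verifying that it lands in the prescribed facet while remaining in the neighbourhood filtration $\{\UC(E)\}$ of $Y$ is the computational core of the argument; it is also the reason one needs the full classification of Proposition \ref{surjection pos} rather than merely the (only dense) image of $\Pos$. Once (b) is in hand, the dimension bookkeeping in (c) and the deduction of (d) are routine.
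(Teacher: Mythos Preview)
The paper does not prove this proposition; it simply records that the result ``has been proved in \cite[Proposition 3.3]{bonnafe positif}'' and uses it as a black box. So there is no in-paper proof to compare against, and your task was effectively to supply one.

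Your argument is correct. Part (a) is indeed tautological. For (b), the concatenation device $\etab\ast\xib$ is exactly the right construction; it may be worth noting that it can be described purely set-theoretically as
\[
Z \;=\; (Y\setminus -Y)\;\cup\;\bigl((Y\cap -Y)\cap X\bigr),
\]
which makes the verification that $Z$ is a positive subset, that $Z\subseteq Y$ (hence $Z$ lies in every basic neighbourhood $\UC(E)$ of $Y$), and that $Z\in\DC_\HC(X)$ for each $\HC\in\AG$ completely transparent, without having to worry about the domains of the successive $\xi_j$. Your sequence-based formulation gives the same $Z$, but the restriction bookkeeping (``discarding those that vanish there'') is slightly delicate and would benefit from a line justifying that $\xi_{j}$ can indeed be restricted to the relevant subspace at each stage.

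For (c), the step ``equality of dimensions forces $\FC$ and $\FC'$ to lie in exactly the same hyperplanes of $\AG$'' uses implicitly that $X\cap(-X)$ is a \emph{saturated} subgroup of $\L$ (equivalently, $\L/(X\cap -X)$ is torsion-free, which holds because it is totally ordered). This is what lets you pass from $\Pos^{-1}(\LC_\AG(\FC'))=\Pos^{-1}(\LC_\AG(\FC))$ to $\LC_\AG(\FC')=\LC_\AG(\FC)$, and hence to $\FC\subseteq\HC$ for every $\HC\in\AG$ containing $\FC'$; you should make this explicit. The reference to Theorem \ref{continu} is not actually needed here --- the identity $\Pos^{-1}(\LC(E))=E^\perp$ together with linear algebra suffices. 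Part (d) follows cleanly from (c) as you say.
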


\bigskip

\bigskip

We define a relation $\infspe$ on the set of facets: we write 
$\FC \infspe \FC'$ if $\overline{\FC} \subseteq \overline{\FC}'$ 
(i.e. if $\FC \subseteq \overline{\FC}'$). 
The Proposition \ref{facettes} (d) shows that:

\bigskip

\begin{coroap}\label{ordre facettes}
The relation $\infspe$ is a partial order. 
\end{coroap}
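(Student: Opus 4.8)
The plan is to verify directly the three axioms defining a partial order — reflexivity, transitivity and antisymmetry — for the relation $\infspe$ on $\facets(\AG)$. Before that I would remark that the relation is unambiguously defined, i.e. that the two formulations offered, $\overline{\FC} \subseteq \overline{\FC}'$ and $\FC \subseteq \overline{\FC}'$, really coincide: on the one hand $\FC \subseteq \overline{\FC}$, so $\overline{\FC}\subseteq\overline{\FC}'$ forces $\FC\subseteq\overline{\FC}'$; on the other hand, if $\FC \subseteq \overline{\FC}'$ then $\overline{\FC}'$ is a closed set containing $\FC$, hence contains the smallest such closed set $\overline{\FC}$.

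Reflexivity and transitivity are then purely formal. Since $\FC \infspe \FC'$ is by definition the inclusion $\overline{\FC} \subseteq \overline{\FC}'$, and inclusion of subsets of $\positif(\L)$ is reflexive and transitive, we immediately get $\FC \infspe \FC$ for every facet $\FC$, and that $\FC \infspe \FC'$ together with $\FC' \infspe \FC''$ yields $\overline{\FC}\subseteq\overline{\FC}''$, i.e. $\FC \infspe \FC''$. No input beyond the definition is used at this stage.

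The only step carrying genuine content is antisymmetry, and this is precisely where Proposition \ref{facettes} (d) is needed. Assume $\FC \infspe \FC'$ and $\FC' \infspe \FC$; then $\overline{\FC} \subseteq \overline{\FC}'$ and $\overline{\FC}' \subseteq \overline{\FC}$, so $\overline{\FC} = \overline{\FC}'$, and Proposition \ref{facettes} (d) then forces $\FC = \FC'$. So there is no real obstacle inside the corollary itself: all of the geometry — the fact that a facet is recovered from the closure of its ``cell description'' $\bigcap_{\HC\in\AG}\DC_\HC(X)$, equivalently that two distinct facets cannot share the same closure — has already been packaged into Proposition \ref{facettes}, and the corollary is a one-line deduction from part (d).
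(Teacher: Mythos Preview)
Your proof is correct and follows exactly the paper's approach: the paper simply states that Proposition \ref{facettes} (d) shows the corollary, and you have spelled out precisely why --- reflexivity and transitivity are automatic from the definition via inclusion of closures, and antisymmetry is exactly the content of part (d). Your preliminary remark reconciling the two formulations of $\infspe$ is a helpful clarification that the paper leaves implicit.
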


\bigskip

Recall that a map $\xi : \XC \to P$, where $\XC$ is a topological space and 
$P$ is a partially ordered set, is called {\it upper semicontinuous} if, 
for all $p \in P$, the set $\{x \in \XC~|~\xi(x) < p\}$ is open. 

\bigskip

\begin{coroap}\label{upper}
Let $P$ be a partially ordered set and let $\xi : \positif(\L) \to P$ 
be a map satisfying the following properties:
\begin{itemize}
\itemth{1} $\xi$ is constant on facets (if $\FC$ is a facet, we denote 
by $\xi(\FC)$ the value of $\xi$ on $\FC$). 

\itemth{2} If $\FC$ and $\FC'$ are two facets such that 
$\FC \infspe \FC'$, then $\xi(\FC) \ge \xi(\FC')$.
\end{itemize}
Then $\xi$ is upper semicontinuous.
\end{coroap}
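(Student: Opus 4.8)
The plan is to fix $p \in P$ and prove that $U_p := \{X \in \positif(\L)\mid \xi(X) < p\}$ is open; since this holds for every $p$, upper semicontinuity follows directly from the definition. First I would observe that, because $\xi$ is constant on $\AG$-facets (condition (1)) and the $\AG$-facets form a partition of $\positif(\L)$ (they are the $\smile_\AG$-classes), the set $U_p$ is a union of $\AG$-facets. So it is enough to produce, for each facet $\FC$ with $\xi(\FC) < p$, an open set $O_\FC$ with $\FC \subseteq O_\FC \subseteq U_p$: then $U_p = \bigcup_{\FC \subseteq U_p} O_\FC$ is visibly open.

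The candidate neighbourhood is read off from Proposition~\ref{facettes}(a), namely $\FC = \bigcap_{\HC \in \AG}\DC_\HC(X)$ for any $X \in \FC$, by keeping only the genuine open half-spaces. Concretely, picking any $X \in \FC$ I would set
$$O_\FC = \bigcap_{\substack{\HC \in \AG\\ \FC \not\subseteq \HC}} \DC_\HC(X).$$
For each such $\HC$ one has $X \notin \HC$ (otherwise $\DC_\HC(X) = \HC \supseteq \FC$, contradicting $\FC \not\subseteq \HC$), so $\DC_\HC(X)$ is one of the open sets $\UC(\pm\l)$ with $\HC = \HC_\l$; moreover it does not depend on the chosen $X \in \FC$, since two distinct members of the disjoint triple $\{\HC,\UC(\l),\UC(-\l)\}$ cannot both contain the nonempty set $\FC$. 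Hence $O_\FC$ is a finite intersection of open sets, so open, and $\FC \subseteq O_\FC$ because we only discarded some of the sets whose intersection is $\FC$.

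The heart of the matter is to show that $\FC \subseteq U_p$ forces $O_\FC \subseteq U_p$, and this is where Propositions~\ref{composantes connexes} and~\ref{facettes}(b) do the work. Take $Y \in O_\FC$ and let $\GC$ be the facet through $Y$; I would prove $\FC \infspe \GC$, i.e. $\FC \subseteq \overline{\GC}$. By Proposition~\ref{facettes}(b) we have $\overline{\GC} = \bigcap_{\HC \in \AG}\overline{\DC_\HC(Y)}$, so it suffices to check $\FC \subseteq \overline{\DC_\HC(Y)}$ for every $\HC \in \AG$. If $\FC \not\subseteq \HC$, then $Y$ lies in the same open half-space of $\HC$ as $\FC$, so $\DC_\HC(Y) = \DC_\HC(X) \supseteq \FC$ (using Proposition~\ref{facettes}(a) for the last inclusion). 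If $\FC \subseteq \HC$, then whatever the position of $Y$ relative to $\HC$, Proposition~\ref{composantes connexes} (which gives $\overline{\UC(\mu)} = \UC(\mu)\cup\HC_\mu$ and that $\HC$ itself is closed) yields $\HC \subseteq \overline{\DC_\HC(Y)}$, hence $\FC \subseteq \HC \subseteq \overline{\DC_\HC(Y)}$. Thus $\FC \infspe \GC$, and condition (2) then gives $\xi(\GC) \le \xi(\FC) < p$, i.e. $Y \in U_p$, as wanted. The only real obstacle is spotting that one should relax the defining intersection of $\FC$ to its open half-spaces only; once that idea is in place, everything reduces to the bookkeeping sketched above together with the elementary facts about rational hyperplanes and half-spaces recalled in the Appendix.
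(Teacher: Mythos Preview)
Your proof is correct and follows essentially the same approach as the paper's: your open neighbourhood $O_\FC$ is precisely what the paper constructs as the $\AG_x$-chamber $\CC$ of $x$ (where $\AG_x=\{\HC\in\AG\mid x\notin\HC\}$), and your verification that $\FC\infspe\GC$ via the case split on whether $\FC\subseteq\HC$ mirrors the paper's computation of $\overline{\FC}$ and $\overline{\FC'}$. If anything, your presentation is slightly more direct, since you write $O_\FC$ explicitly as a finite intersection of open half-spaces rather than invoking the auxiliary arrangement $\AG_x$.
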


\begin{proof}
Let $p \in P$ and let $\UC=\{x \in \positif(\L)~|~\xi(x) < p\}$. 
Let $x \in \UC$ and let $\AG_x=\{\HC \in \AG~|~x \not\in \HC\}$. 
Let $\CC$ be the $\AG_x$-facet containing $x$. Then 
$\CC$ is an $\AG_x$-chamber (because $x$ does not belong to any 
hyperplane in $\AG_x$), and so $\CC$ is open. To prove the corollary, 
it is enough to show that $\CC \subseteq \UC$. 

For this, let $y \in \CC$ and let $\FC'$ be the $\AG$-facet containing 
$y$. Then $\FC' \subseteq \CC$ and so it is sufficient 
to show that $\FC' \subseteq \UC$. Let $\FC$ denote the $\AG$-facet 
of $x$: by the properties (1) and (2), we only need to show that 
$\FC \infspe \FC'$. 

Then, by Corollary \ref{facettes}, 
$$\overline{\FC}'=\Bigl(\bigcap_{\HC \in \AG_x} \overline{\DC_\HC(y)}\Bigr) ~\bigcap~ 
\Bigl(\bigcap_{\HC \in \AG\setminus \AG_x} \overline{\DC_\HC(y)}\Bigr)$$
$$\overline{\FC}=\Bigl(\bigcap_{\HC \in \AG_x} \DC_\HC(x)\Bigr) ~\bigcap~ 
\Bigl(\bigcap_{\HC \in \AG\setminus \AG_x} \overline{\DC_\HC(x)}\Bigr).
\leqno{\text{and}}$$
But, since $x$ and $y$ belongs to $\CC$, we have, again by Corollary 
\ref{facettes} (but applied to $\AG_x$), 
$$\bigcap_{\HC \in \AG\setminus \AG_x} \overline{\DC_\HC(y)}=
\bigcap_{\HC \in \AG\setminus \AG_x} \overline{\DC_\HC(x)}=\overline{\CC}.$$
So
$$\overline{\FC}' = \overline{\CC} ~\bigcap~ 
\Bigl(\bigcap_{\HC \in \AG\setminus \AG_x} \overline{\DC_\HC(y)}\Bigr)$$
$$\overline{\FC}=\overline{\CC} ~\bigcap~ 
\Bigl(\bigcap_{\HC \in \AG\setminus \AG_x} \HC\Bigr).
\leqno{\text{and}}$$
Now, if $\HC \in \AG_x$, then $\overline{\DC_\HC(y)}$ contains 
$\HC$, so $\overline{\FC} \subseteq \overline{\FC}'$, as expected.
\end{proof}

\bigskip

\noindent{\sc Remark - } Under the hypothesis of the Proposition, 
the above proof can be followed word by word to show that the set 
$\{x \in \positif(\L)~|~\xi(x) \le p\}$ is also open.\finl

\bigskip

\bigskip

\end{document}

